      \theoremstyle{plain}
     \newtheorem{thm}{Theorem}[section]
\newtheorem{lem}[thm]{Lemma}
\newtheorem{cor}[thm]{Corollary}
\newtheorem{pro}[thm]{Proposition}
\newtheorem{rmk}[thm]{Remark}
\newtheorem{ex}[thm]{Example}
\newtheorem{defi}[thm]{Definition}
\newcommand{\bess}{\begin{eqnarray*}}
\newcommand{\eess}{\end{eqnarray*}}
\newcommand{\Int}{{\rm Int}}
\begin{document}

%




\author{Yuefei Wang}
\address{Yuefei Wang, Institute of Mathematics, AMSS, Chinese Academy of Sciences, Beijing, 100190, P.R.China}
\email{wangyf@math.ac.cn}

\author{Jinghua Yang}
\address{Jinghua Yang, Shanghai Univ., Shanghai, 200444, P.R.China}
\email{davidyoung@amss.ac.cn}


  \title[]{\bf\ On $p$-adic M\"obius maps}


   \begin{abstract} In this paper, we study three aspects of the $p-$adic M\"obius maps. One is the group $\mathrm{PSL}(2,\mathcal{O}_{p})$, another is the geometrical characterization of the $p-$adic M\"obius maps and its application, and the other is different norms of the $p-$adic M\"obius maps. Firstly, we give a series of equations of the $p-$adic M\"obius maps in $\mathrm{PSL}(2,\mathcal{O}_{p})$ between matrix, chordal, hyperbolic and unitary aspects. Furthermore, the properties of $\mathrm{PSL}(2,\mathcal{O}_{p})$ can be applied to study the geometrical characterization, the norms, the decomposition theorem of $p-$adic M\"obius maps, and the convergence and divergence of $p-$adic continued fractions. Secondly, we classify the $p-$adic M\"obius maps into four types and study the geometrical characterization of the $p-$adic M\"obius maps from the aspects of fixed points in $\mathbb{P}^{1}_{Ber}$ and the invariant axes which yields the decomposition theorem of $p-$adic M\"obius maps. Furthermore, we prove that if a subgroup of $\mathrm{PSL}(2,\mathbb{C}_{p})$ containing elliptic elements only, then all elements fix the same point in $\mathbb{H}_{Ber}$ without using the famous theorem--Cartan fixed point theorem, and this means that this subgroup has potentially good reduction. In the last part, we extend the inequalities obtained by Gehring and Martin\cite{F.G1,F.G2},  Beardon and Short \cite{AI} to the non-archimedean settings.  These inequalities of $p$-adic M\"obius maps are between the matrix, chordal, three-point and unitary norms. This part of work can be applied to study the convergence of the sequence of $p-$adic M\"obius maps which can be viewed as a special cases of the work in \cite{CJE} and the discrete criteria of the subgroups of $\mathrm{PSL}(2,\mathbb{C}_{p})$.


   \end{abstract}

   \subjclass[2010]{51B10, 32P05, 37P05, 40A05}

   \keywords{$p$-adic M\"obius maps, discrete criteria, the convergence theorem, three points norms, unitary norms, hyperbolic norms, chordal norms, inequalities}



   \date{\today}


   \maketitle





\section{Introduction}

\subsection{Statement of results }

We call an element in the projective special linear group $\mathrm{PSL}(2,\mathbb{C}_{p})$ the $p$-adic M\"obius map, where $\mathbb{Q}_{p}$ is the field of $p$-adic rational
numbers and $\mathbb{C}_{p}$ is the completion of the algebraic closure of
$\mathbb{Q}_{p}$. The projective space $\mathbb{P}^{1}(\mathbb{C}_{p})$ is totally disconnected and not locally compact, which implies that we can not adopt the method used in dealing with the complex settings easily. The main tool that we use is the projective Berkovich space $\mathbb{P}^{1}_{Ber}$(see concrete definitions in section 3), since $\mathrm{PSL}(2,\mathbb{C}_{p})$ acts on the hyperbolic Berkovich space $\mathbb{H}_{Ber}$ isometrically and the projective Berkovich space is compact with respect to the weak topology.

We study the subgroup $\mathrm{PSL}(2,\mathcal{O}_{p})$, where $\mathcal{O}_{p}=\{z||z|\le 1\}$ firstly, since the study of the unitary groups of the projective special linear group $\mathrm{PSL}(2,\mathbb{C})$ is a very important topic in the study of M\"obius maps. There exist a lot of equations of M\"obius maps in the unitary group. It is natural to generalize the equations to the non-archimedean settings.   We give a series of equations of the $p-$adic M\"obius maps in $\mathrm{PSL}(2,\mathcal{O}_{p})$ between matrix, chordal, hyperbolic and unitary aspects. Furthermore, the properties of $\mathrm{PSL}(2,\mathcal{O}_{p})$ can be applied to study the geometrical characterization, the norms and the decomposition theorem of $p-$adic M\"obius maps and the convergence and divergence of $p-$adic continued fractions.

Let $g$ be a $p$-adic M\"obius map, $\rho_{v}(z,w)$ be the chordal metric on the projective line $\mathbb{P}^{1}(\mathbb{C}_{p})$, $\rho(z,w)$ be the hyperbolic metric on the hyperbolic Berkovich space, $L(g)=p^{\rho(g(\zeta_{Gauss}),\zeta_{Gauss})}$, and $\parallel \cdot \parallel$ be the matrix norm (see concrete definitions in section 2).

\begin{thm}\label{thm:1}
For any $p$-adic M\"obius map $g$, the following are equivalent:

$(1)$ $g\in \mathrm{PSL}(2,\mathcal{O}_{p})$;

$(2)$ $L(g)=1$;

$(3)$  $\rho(\zeta_{Gauss},g(\zeta_{Gauss}))=0$;

$(4)$ $g$ is a chordal isometry;

$(5)$ $\parallel g\parallel=1$;

$(6)$ for any $h$ in the Matrix ring $\mathrm{M}(2,\mathbb{C}_{p})$, $\parallel gh\parallel=\parallel hg\parallel=\parallel ghg^{-1}\parallel=\parallel h\parallel$.

\end{thm}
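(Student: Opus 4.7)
My plan is to take condition (1) as the hub and prove four clusters of equivalences: the tautological pair (2) $\Leftrightarrow$ (3), the matrix cluster (1) $\Leftrightarrow$ (5) $\Leftrightarrow$ (6), the Berkovich equivalence (1) $\Leftrightarrow$ (3), and the chordal equivalence (1) $\Leftrightarrow$ (4). The first of these is immediate from the definition $L(g) = p^{\rho(g(\zeta_{Gauss}), \zeta_{Gauss})}$ together with the fact that $\rho$ is a genuine metric, so $L(g) = 1$ exactly when $g$ fixes the Gauss point.

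For the matrix cluster, after choosing a representative with $\det g = 1$, condition (1) says all entries of $g$ lie in $\mathcal{O}_{p}$, equivalently $\|g\| \le 1$; combined with $1 = |\det g| \le \|g\|^{2}$ this forces $\|g\| = 1$, giving (1) $\Leftrightarrow$ (5). For (5) $\Rightarrow$ (6), I would use the non-Archimedean submultiplicativity of the matrix norm: if $\|g\| = 1$ then the explicit adjugate formula gives $\|g^{-1}\| = 1$ as well, and the chain $\|h\| = \|g^{-1}gh\| \le \|gh\| \le \|g\|\|h\| = \|h\|$ forces equality throughout; the estimates for $\|hg\|$ and $\|ghg^{-1}\|$ are handled identically. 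The converse (6) $\Rightarrow$ (5) follows on taking $h = I$.

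For the Berkovich equivalence (1) $\Leftrightarrow$ (3), I would identify $\zeta_{Gauss}$ with the Gauss sup-norm on $\mathbb{C}_{p}[T]$ and verify directly that any $g \in \mathrm{PSL}(2, \mathcal{O}_{p})$ preserves this seminorm, while conversely a M\"obius map fixing $\zeta_{Gauss}$ must stabilize the closed unit disc and hence admit a representative with entries in $\mathcal{O}_{p}$. For the chordal equivalence (1) $\Leftrightarrow$ (4), the forward direction is a direct computation from the formula for $\rho_{v}$ together with the unit determinant condition, while the backward direction uses the characterization of $\zeta_{Gauss}$ in terms of chordal data to conclude that any chordal isometry fixes $\zeta_{Gauss}$, whence (3) $\Rightarrow$ (1) closes the loop. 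The main obstacle I expect is the Berkovich direction (3) $\Rightarrow$ (1), which requires a careful description of the $\mathrm{PSL}(2, \mathbb{C}_{p})$-action on multiplicative seminorms and the precise identification of the stabilizer of $\zeta_{Gauss}$; once this geometric input is in hand, the remaining steps reduce to short manipulations of the non-Archimedean submultiplicativity and the explicit chordal formula.
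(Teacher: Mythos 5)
Your matrix cluster ($(1)\Leftrightarrow(5)\Leftrightarrow(6)$ via the determinant bound $1=|\det g|\le\|g\|^{2}$ and the chain $\|h\|=\|g^{-1}gh\|\le\|gh\|\le\|g\|\,\|h\|$) and your reading of $(2)\Leftrightarrow(3)$ coincide with the paper's argument. Where you diverge is in how you close the loop through $(3)$ and $(4)$. The paper never computes the stabilizer of $\zeta_{Gauss}$: it quotes the identity $\rho(g(\zeta_{Gauss}),\zeta_{Gauss})=2\log_{p}\|g\|$ (Lemma \ref{yl:main}), which makes $(3)\Leftrightarrow(5)$ a one-line consequence and dissolves what you call the ``main obstacle''; and it handles $(4)\Leftrightarrow(2)$ via Lemma \ref{lem:lip1}, which shows that $L(g)=p^{\rho(\zeta_{Gauss},g(\zeta_{Gauss}))}$ is not merely a Lipschitz bound but is attained at some pair of points, so a chordal isometry forces $L(g)=1$, and conversely $L(g)=L(g^{-1})=1$ squeezes $\rho_{v}(g(z),g(w))=\rho_{v}(z,w)$.

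Two of your sketched steps need repair. First, your mechanism for $(3)\Rightarrow(1)$ --- ``a M\"obius map fixing $\zeta_{Gauss}$ must stabilize the closed unit disc'' --- is false as stated: $z\mapsto -1/z$ lies in $\mathrm{PSL}(2,\mathcal{O}_{p})$ and fixes $\zeta_{Gauss}$, yet sends $D(0,1)$ onto the complement of $D(0,1)^{-}$. The correct statement is that such a map permutes the residue discs (equivalently, the image of $D(0,1)$ is $\mathbb{P}^{1}(\mathbb{C}_{p})$ minus finitely many open residue discs, in the spirit of Lemma \ref{yl:image}), and extracting from this a representative with entries in $\mathcal{O}_{p}$ still requires an argument; the paper's quantitative lemma bypasses all of it. Second, your $(4)\Rightarrow(3)$ rests on an unstated ``characterization of $\zeta_{Gauss}$ in terms of chordal data''; this can be made precise (a chordal isometry permutes residue classes, and $\zeta_{Gauss}$ is their common boundary point in $\mathbb{P}^{1}_{Ber}$), but as written it appeals to exactly the geometric input you have not supplied, whereas the attained-Lipschitz-constant lemma yields $(4)\Rightarrow(2)$ immediately. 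So your skeleton is sound and largely matches the paper's, but the two non-matrix equivalences are carried in the paper by quantitative lemmas that you would need either to import or to replace with a genuine, corrected stabilizer computation.
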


In the second part, we study the characterization of $p-$adic M\"obius maps. In \cite{K.F}, Kato  introduced the idea of the Klenian group to study the $p-$adic M\"obius maps firstly, and in \cite{J.V,QY}, Vermitage and Parker, Qiu, Yang and Yin gave the discrete criteria of subgroups of $\mathrm{PSL}(2,\mathbb{C}_{p})$. The method that they used are also derived from the Kleinian group. We not only follow the philosophy of the Kleinian group to study the $p-$adic M\"obius maps, but also we lay emphasis on the study of the difference between them. First, we classify the $p-$adic M\"obius maps into four types which is a bit different than those in the Kleinian group, and we study the geometrical characterization of the $p-$adic M\"obius maps from the aspects of fixed points in $\mathbb{P}_{Ber}$ and the invariant axes. Furthermore, we can decompose a $p-$adic M\"obius map $g$ into two involutions $\alpha,\beta$(an elliptic element of order $2$), namely $g=\alpha\circ \beta$, and the structure of the fixed points of $\alpha,\beta$ can reflect the type and other properties of $g$.  This method can be used to prove that if a subgroup of $\mathrm{PSL}(2,\mathbb{C}_{p})$ contains elliptic elements only, then all elements fix the same point in $\mathbb{H}_{Ber}$ without using the famous theorem--Cartan fixed point theorem, and this means that this subgroup has potentially good reduction(see concrete definitions in section 2). In the proof of the results, we should face three difficulties which do not exist in the archimedean settings. One is that  $\mathbb{P}^{1}(\mathbb{C}_{p})$ and $\mathbb{P}_{Ber}$ are not locally compact, another is that there exists a new kind of $p-$adic M\"obius maps--the wild elliptic elements whose geometrical structure of the fixed points are complicated, and the other is that the prime number $p$ affects the structure of the fixed points of $p-$adic M\"obius maps. We give a series of tables to compare the properties of $p-$adic M\"obius maps and those of the Kleinian group.
$\\*$

\begin{tabular}{|l|l|l|}
\hline
& the Kleinian group  &  $p-$adic M\"obius maps\\\hline
type&\begin{tabular}{l} loxodromic\ \ \ \ \ \ \ \ \ \ \ \ \\ \hline parabolic \\ \hline elliptic\\ \hline \ \ \ \ \end{tabular}& \begin{tabular}{l} loxodromic\ \ \ \ \ \ \ \ \ \ \ \ \\ \hline parabolic \\ \hline tame elliptic\\ \hline wild elliptic\end{tabular}\\
\hline
\end{tabular}
$\\*$

Assuming $g(z)=\frac{az+b}{cz+d}=\alpha\circ\beta$ with $ad-bc=1$, let $F_{g}, F_{\alpha}, F_{\beta}$ denote the fixed points of $g, \alpha, \beta$ in $\mathbb{P}^{1}\cup \mathbb{H}^{3}$ respectively.

$\\*$

\begin{tabular}{|l|l|l|}
\hline
 the Kleinian group & $F_{g}\subset \mathbb{P}^{1}\cup \mathbb{H}^{3}$ & $ F_{\alpha}\cap F_{\beta}$\\\hline
\begin{tabular}{l} loxodromic\ \ \ \ \ \ \ \ \ \ \ \ \\ \hline parabolic \\ \hline elliptic  \end{tabular}& \begin{tabular}{l} two points in $\mathbb{P}^{1}$ \\ \hline one point in $\mathbb{P}^{1}$ \\ \hline a geodesic line in $\mathbb{P}^{1}\cup \mathbb{H}^{3}$ \ \ \ \ \end{tabular}&\begin{tabular}{l} $\emptyset$  \\ \hline unique point in $\mathbb{P}^{1}$  \\ \hline unique point in $\mathbb{H}^{3}$  \end{tabular}\\
\hline
\end{tabular}
 \\
$\\*$

Let $\Int(A)$ denote the interior of the set $A$. Assuming $g(z)=\frac{az+b}{cz+d}=\alpha\circ\beta$ with $ad-bc=1$, let $F_{g}, F_{\alpha}, F_{\beta}$ denote the fixed points of $g, \alpha, \beta$  in $\mathbb{P}^{1}_{Ber}$ respectively.
$\\*$

\begin{tabular}{|l|l|l|}
\hline
 the $p-$adic M\"obius map& $F_{g}\subset \mathbb{P}^{1}\cup \mathbb{H}_{Ber}$ & $ F_{\alpha}\cap F_{\beta}$\\\hline
 loxodromic &  two points in $\mathbb{P}^{1}$ & $\emptyset$\\\hline
 parabolic  & $F_{g}\cap \mathbb{H}_{Ber}\neq \emptyset$ &  $\Int(F_{\alpha}\cap F_{\beta})\neq \emptyset$ \\\hline
tame elliptic & a geodesic line in $\mathbb{P}_{Ber}^{1}$ & unique point in $\mathbb{H}_{Ber}$\\\hline
wild elliptic & $\Int(F_{g})\neq \emptyset$  &  $\Int(F_{\alpha}\cap F_{\beta})\neq \emptyset$ \\
\hline
\end{tabular}
$\\*$


\begin{thm}\label{thm:reduction}If the subgroup $G\subset \mathrm{PSL}(2,\mathbb{C}_{p})$ contains elliptic elements only, then all the elements of $G$ share at least one fixed point in $\mathbb{H}_{Ber}$. Furthermore, $G$ has potentially good reduction, and $G$ is equicontinuous on $\mathbb{P}^{1}(\mathbb{C}_{p})$.
\end{thm}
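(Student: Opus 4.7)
The plan is to exploit the $\mathbb{R}$-tree structure of $\mathbb{H}_{Ber}$ and the isometric action of $\mathrm{PSL}(2,\mathbb{C}_{p})$ on it, using the Helly property of convex subtrees in place of Cartan's fixed point theorem, and closing up via the compactness of $\mathbb{P}^{1}_{Ber}$. For each $g\in G$, write $A_{g}:=F_{g}\cap \mathbb{H}_{Ber}$; since $g$ is elliptic, $A_{g}$ is a nonempty closed convex subtree. First I would establish the pairwise intersection property: for any $g_{1},g_{2}\in G$, $A_{g_{1}}\cap A_{g_{2}}\neq \emptyset$. Supposing the contrary, let $[x_{1},x_{2}]$ be the unique geodesic segment realizing the distance between $A_{g_{1}}$ and $A_{g_{2}}$, with $x_{i}\in A_{g_{i}}$. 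Since each $g_{i}$ must act as a non-trivial rotation of the directions at $x_{i}$ (otherwise $A_{g_{i}}$ would extend along the bridge toward $x_{3-i}$, contradicting minimality), a standard tree argument shows that $g_{1}g_{2}$ has positive translation length along an axis through $[x_{1},x_{2}]$, and is therefore loxodromic---contradicting the hypothesis that every element of $G$ is elliptic.

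Next, by the Helly property for convex subtrees of an $\mathbb{R}$-tree (pairwise intersection implies, inductively, global intersection for any finite family), every finite subfamily of $\{A_{g}\}_{g\in G}$ has nonempty intersection in $\mathbb{H}_{Ber}$. To pass to all of $G$, take closures $\overline{A_{g}}$ in the compact space $\mathbb{P}^{1}_{Ber}$; by the finite intersection property, $\bigcap_{g\in G}\overline{A_{g}}$ is nonempty. The main obstacle I anticipate is ensuring this common intersection actually meets $\mathbb{H}_{Ber}$ rather than sitting in the boundary $\mathbb{P}^{1}(\mathbb{C}_{p})$. Using the classification in the table above (tame elliptic fixes a geodesic passing through $\mathbb{H}_{Ber}$; wild elliptic has $\Int(F_{g})\neq \emptyset$), each $A_{g}$ contains Type~II points; moreover, if two elliptic elements shared only a boundary fixed point $z_{0}\in \mathbb{P}^{1}(\mathbb{C}_{p})$, then their invariant axes emanating from $z_{0}$ would be forced to merge at some interior point (else the composition would translate toward $z_{0}$, violating ellipticity). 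One concludes that the common intersection meets $\mathbb{H}_{Ber}$ at some Type~II point $\zeta$.

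Finally, since $\zeta$ is Type~II, choose $h\in \mathrm{PSL}(2,\mathbb{C}_{p})$ with $h(\zeta)=\zeta_{Gauss}$. Then $hGh^{-1}$ stabilizes $\zeta_{Gauss}$, and by Theorem~\ref{thm:1}~$(3)\Leftrightarrow(1)$ this forces $hGh^{-1}\subseteq \mathrm{PSL}(2,\mathcal{O}_{p})$, which is precisely the definition of potentially good reduction. Equicontinuity then follows from Theorem~\ref{thm:1}~(4): elements of $\mathrm{PSL}(2,\mathcal{O}_{p})$ are chordal isometries, so $hGh^{-1}$ acts on $\mathbb{P}^{1}(\mathbb{C}_{p})$ by chordal isometries---hence equicontinuously---and since conjugation by the fixed map $h$ is a bi-Lipschitz homeomorphism of $\mathbb{P}^{1}(\mathbb{C}_{p})$, equicontinuity transfers back to $G$ itself.
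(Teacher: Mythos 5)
Your proposal is correct in outline, and it reaches the crucial pairwise-intersection step by a genuinely different route from the paper. The paper does not use translation lengths at all: it writes two elliptic elements as $f=a\circ b$ and $g=a\circ c$ with a \emph{common} involution $a$ whose axis is orthogonal to both $A_f$ and $A_g$ (Lemmas \ref{yl:orthogonal}, \ref{yl:orthogonalp2} and \ref{thm:1.1}), shows $F_f\supset A\cap B\neq\emptyset$, $F_g\supset A\cap C\neq\emptyset$ and $B\cap C\neq\emptyset$ via its lemmas on intersections of orthogonal axes, and then takes the median of three such points in the $\mathbb{R}$-tree; the case $p=2$ needs the separate machinery of tailed geodesics because orthogonal axes no longer meet there. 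Your replacement --- disjoint fixed subtrees would make $g_1g_2$ a hyperbolic tree isometry, hence loxodromic, via the standard identity $\ell(g_1g_2)=2\,d(\mathrm{Fix}(g_1),\mathrm{Fix}(g_2))$ --- combined with the Helly property (which is exactly the paper's median argument in disguise) dispenses with the involution calculus and the case split on $p$ entirely, at the cost of not recovering the finer structural information tabulated in the introduction. The compactness step and the endgame (conjugate the common fixed point to $\zeta_{Gauss}$, invoke good reduction and the chordal-isometry property of $\mathrm{PSL}(2,\mathcal{O}_{p})$) coincide with the paper's. Three points you should still pin down, the latter two of which the paper also leaves implicit: first, parabolic M\"obius maps also fix points of $\mathbb{H}_{Ber}$, so ``$g_1g_2$ acts hyperbolically on the tree'' contradicts the hypothesis only because $G$ excludes parabolics as well as loxodromics (which it does, by assumption); second, the possibility that $\bigcap_{g}\overline{A_{g}}$ sits entirely in $\mathbb{P}^{1}(\mathbb{C}_{p})$ needs a genuine argument --- if all of $G$ fixed a single $z_{0}\in\mathbb{P}^{1}(\mathbb{C}_{p})$, normalise $z_{0}=\infty$, observe that $G$ then consists of affine maps containing no translations, and a short commutator computation forces a common second fixed point in $\mathbb{C}_{p}$, hence a common invariant geodesic in $\mathbb{H}_{Ber}$; your one-sentence version of this only treats pairs, which is already covered by the bridge argument. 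Third, the claim that the common fixed point is of type II (needed to conjugate it to $\zeta_{Gauss}$) deserves justification, e.g.\ by noting that the common intersection contains a median of type II points or a nondegenerate segment. None of these affects the soundness of the overall strategy.
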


A point $a\in \mathbb{P}^{1}(\mathbb{C}_{p})$ is called the limit point of a subgroup $G$ of $\mathrm{PSL}(2,\mathbb{C}_{p})$ if there exists a point $b\in\mathbb{P}^{1}(\mathbb{C}_{p})$ and an infinite sequence $\{g_{n}|n\ge 1\}\subset G$, where $g_{n}\neq g_{m}$ if $n\neq m$ with $\lim g_{n}(b)=a$. The set consisting of all limit points is called the {\it limit set}. $G$ is said to act discontinuously at $x\in \mathbb{P}^{1}(\mathbb{C}_{p})$ if there is a neighborhood $U$ of $x$ such that $g(U)\cap U=\emptyset$ for all but finitely many $g\in G$. The set of points at which $G$ acts discontinuously is called the {\it discontinuous set}.

\begin{thm}\label{thm:limitreduction} If the limit set of $G$ is empty, then $G$ has potentially good reduction.

\end{thm}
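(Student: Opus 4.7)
The plan is to reduce Theorem \ref{thm:limitreduction} to Theorem \ref{thm:reduction}. Specifically, I would show that the emptiness of the limit set of $G$ forces every element of $G$ to be elliptic, so that Theorem \ref{thm:reduction} applies directly and delivers potentially good reduction.

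The first observation is that every non-identity $p$-adic M\"obius map has at least one fixed point in $\mathbb{P}^{1}(\mathbb{C}_{p})$. Indeed, writing $g(z)=(az+b)/(cz+d)$, the fixed-point equation $cz^{2}+(d-a)z-b=0$ has a root in $\mathbb{C}_{p}$ because $\mathbb{C}_{p}$ is algebraically closed; in the degenerate case $c=0$ the point $\infty$ serves as a fixed point.

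Next I would argue that no element of $G$ can have infinite order. Suppose, for contradiction, that $g\in G$ has infinite order and let $a\in \mathbb{P}^{1}(\mathbb{C}_{p})$ be one of its fixed points. Then the iterates $g_{n}:=g^{n}$ ($n\ge 1$) are pairwise distinct elements of $G$ and satisfy $g_{n}(a)=a$ for every $n$, so tautologically $\lim_{n\to\infty}g_{n}(a)=a$. Taking $b=a$ in the definition of limit point, the point $a$ belongs to the limit set of $G$, contradicting the hypothesis.

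Since loxodromic elements (whose multipliers $\lambda$ satisfy $|\lambda|\ne 1$, hence are never roots of unity) and parabolic elements (conjugate to $z\mapsto z+1$) are both of infinite order in characteristic zero, the previous step rules them out of $G$. Consequently every element of $G$ is elliptic, and Theorem \ref{thm:reduction} applies to give a common fixed point in $\mathbb{H}_{Ber}$ and hence potentially good reduction. The substance of the argument lies in Theorem \ref{thm:reduction}; the present result is essentially its corollary, and the only delicate point is the recognition that an infinite-order M\"obius map together with any of its classical fixed points automatically furnishes a limit point via the tautological sequence $g^{n}(a)=a$.
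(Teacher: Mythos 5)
Your proof is correct, and it takes a genuinely different (and in fact stronger) route than the paper's. The paper proves this statement in Section 5 only under the additional hypothesis that $G$ is discrete: it first excludes loxodromic elements by noting that their fixed points lie in the limit set, and then uses discreteness to exclude parabolic elements before invoking Theorem \ref{thm:reduction}. Your argument dispenses with discreteness entirely: an infinite-order element $g$ has a fixed point $a\in\mathbb{P}^{1}(\mathbb{C}_{p})$, the iterates $g^{n}$ are pairwise distinct, and the tautological convergence $g^{n}(a)=a\to a$ exhibits $a$ as a limit point under the paper's literal definition; hence an empty limit set forces every element of $G$ to have finite order, which excludes loxodromic and parabolic elements in one stroke and leaves only elliptic elements (and the identity), so Theorem \ref{thm:reduction} applies. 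This reading of the definition of limit point is consistent with the paper's own usage --- it is precisely how the paper places the fixed points of a loxodromic element in the limit set, and how its second example in Section 5 concludes that $0$ and $\infty$ are limit points of a group of diagonal elliptic elements. The payoff of your approach is that it proves Theorem \ref{thm:limitreduction} exactly as stated in the introduction, i.e.\ without the discreteness assumption that the paper's written proof quietly adds; the cost is nil, since both arguments reduce to Theorem \ref{thm:reduction}, which carries all the substantive content.
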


Two points $\alpha,\beta$ are called {\it antipodal} points if there exists an element $u\in \mathrm{PSL}(2,\mathcal{O}_{p})$ such that $u(0)=\alpha,u(\infty)=\beta$.

\begin{thm}\label{thm:5} For any $p$-adic M\"obius map $g$, there exists an element $u\in \mathrm{PSL}(2,\mathcal{O}_{p})$ such that $g=uf$, where either $f$ is a loxodromic element with antipodal fixed points, or $f=I$.

\end{thm}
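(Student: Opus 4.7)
The strategy is to interpret the decomposition $g=uf$ as a Cartan-type (KA) decomposition for the isometric action of $\mathrm{PSL}(2,\mathbb{C}_{p})$ on the Berkovich hyperbolic space $\mathbb{H}_{Ber}$. By Theorem~\ref{thm:1}, $\mathrm{PSL}(2,\mathcal{O}_{p})$ is precisely the stabiliser of the Gauss point $\zeta_{Gauss}$, so $u\in\mathrm{PSL}(2,\mathcal{O}_{p})$ is equivalent to $u(\zeta_{Gauss})=\zeta_{Gauss}$. Consequently writing $g=uf$ amounts to choosing $f$ with $f^{-1}(\zeta_{Gauss})=g^{-1}(\zeta_{Gauss})$. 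If $g$ already fixes $\zeta_{Gauss}$, then Theorem~\ref{thm:1} forces $g\in\mathrm{PSL}(2,\mathcal{O}_{p})$, and the decomposition is the trivial one $g=g\cdot I$.

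Otherwise, let $\zeta_{1}:=g^{-1}(\zeta_{Gauss})$, which is a type~II point of $\mathbb{H}_{Ber}$ distinct from $\zeta_{Gauss}$ (types are preserved by the $\mathrm{PSL}(2,\mathbb{C}_{p})$-action). Extend the geodesic segment joining $\zeta_{1}$ to $\zeta_{Gauss}$ to a complete bi-infinite geodesic $\ell$ in $\mathbb{P}^{1}_{Ber}$ with endpoints $\alpha,\beta\in\mathbb{P}^{1}(\mathbb{C}_{p})$. I claim the pair $\{\alpha,\beta\}$ is antipodal. The geodesic $\ell$ passes through $\zeta_{Gauss}$, which forces $\rho_{v}(\alpha,\beta)=1$; combining this with the chordal-isometry characterisation in Theorem~\ref{thm:1}(4), one can exhibit an explicit $v\in\mathrm{PSL}(2,\mathcal{O}_{p})$ with $v(0)=\alpha$ and $v(\infty)=\beta$, for instance by taking the Möbius map sending $(0,\infty)\mapsto(\alpha,\beta)$ and normalising its $\mathrm{SL}_{2}$-representative: the condition $\rho_{v}(\alpha,\beta)=1$ is exactly what makes the resulting matrix lie in $\mathrm{SL}(2,\mathcal{O}_{p})$.

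For $\mu\in\mathbb{C}_{p}^{\ast}$ define $f_{\mu}:=v\circ(z\mapsto\mu z)\circ v^{-1}$; this fixes $\alpha,\beta$ and, when $|\mu|\neq 1$, is loxodromic with axis $\ell$ and translation length $|\log_{p}|\mu||$. Since $\zeta_{1}$ has type~II, $\rho(\zeta_{Gauss},\zeta_{1})$ lies in the rational value group of $\mathbb{C}_{p}$, so a suitable $\mu$ (possibly after swapping $\mu\leftrightarrow\mu^{-1}$ to set the translation direction along $\ell$) gives $f_{\mu}(\zeta_{1})=\zeta_{Gauss}$. Setting $u:=g\circ f_{\mu}^{-1}$ then yields
\[
u(\zeta_{Gauss})=g\bigl(f_{\mu}^{-1}(\zeta_{Gauss})\bigr)=g(\zeta_{1})=\zeta_{Gauss},
\]
so $u\in\mathrm{PSL}(2,\mathcal{O}_{p})$ by Theorem~\ref{thm:1}, and $g=u\circ f_{\mu}$ is the asserted decomposition. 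The main obstacle is the identification in the second paragraph — complete geodesics through $\zeta_{Gauss}$ versus antipodal pairs — which amounts to the transitivity of $\mathrm{PSL}(2,\mathcal{O}_{p})$ on pairs of chordal distance $1$ in $\mathbb{P}^{1}(\mathbb{C}_{p})$. I anticipate that this is available from the paper's earlier structural discussion of $\mathrm{PSL}(2,\mathcal{O}_{p})$; once it is granted, the rest follows formally from Theorem~\ref{thm:1} and the geometry of $\mathbb{H}_{Ber}$.
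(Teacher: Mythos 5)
Your argument is correct, but it is a genuinely different proof from the one in the paper. You obtain the factorisation as a Cartan-type (stabiliser--translation) decomposition for the action on $\mathbb{H}_{Ber}$: since Theorem~\ref{thm:1} identifies $\mathrm{PSL}(2,\mathcal{O}_{p})$ with the stabiliser of $\zeta_{Gauss}$, it suffices to find a loxodromic $f$ with antipodal fixed points carrying $\zeta_{1}=g^{-1}(\zeta_{Gauss})$ to $\zeta_{Gauss}$, which you do by extending $[\zeta_{1},\zeta_{Gauss}]$ to a geodesic $\ell$ with type~I ends (antipodal because $\ell$ passes through $\zeta_{Gauss}$) and translating along $\ell$ by $\rho(\zeta_{1},\zeta_{Gauss})$, a length available in the value group because $\zeta_{1}$ is of type~II. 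The paper instead first writes $g=a\circ b$ as a product of two involutions via Lemma~\ref{thm:1.1} (arranging the axis of $a$ to contain $\zeta_{Gauss}$), performs an explicit matrix computation to produce $h\in\mathrm{PSL}(2,\mathcal{O}_{p})$ making the fixed points of $b$ antipodal, and then inserts a third involution $c$ fixing $\zeta_{Gauss}$ so that $g$ becomes (a conjugate of) $(ac)(cb)$ with $ac$ in the stabiliser and $cb$ loxodromic with antipodal fixed points; the parabolic case is handled separately. Your route is shorter, uniform in the type of $g$, and makes the underlying group theory transparent, while the paper's stays within its involution machinery and produces the factors in explicit matrix form. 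Two details you should still write out: (i) the extension $\ell$ must be chosen with type~I (not type~IV) ends, which is always possible; and (ii) the existence of $v\in\mathrm{PSL}(2,\mathcal{O}_{p})$ with $v(0)=\alpha$, $v(\infty)=\beta$ needs either a short case analysis on $|\alpha|,|\beta|$ to normalise the matrix, or, more cleanly, the observation that any $v_{0}$ sending $(0,\infty)$ to $(\alpha,\beta)$ satisfies $v_{0}^{-1}(\zeta_{Gauss})\in[0,\infty]$, so post-composing $v_{0}$ with a suitable $z\mapsto\lambda z$ yields a map fixing $\zeta_{Gauss}$, hence lying in $\mathrm{PSL}(2,\mathcal{O}_{p})$ by Theorem~\ref{thm:1}.
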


In the last part, we extend the inequalities obtained by Gehring and Martin\cite{F.G1,F.G2},  Beardon and Short \cite{AI} to the non-archimedean settings.  These inequalities of $p$-adic M\"obius maps are between the matrix, chordal, three-point and unitary norms. This part of work can be applied to study the convergence of the sequence of $p-$adic M\"obius maps which can be viewed as a special cases of the work in \cite{CJE} and the discrete criteria of the subgroups of $\mathrm{PSL}(2,\mathbb{C}_{p})$.

For any two $p$-adic M\"obius maps $g,h$, we define the { \bf uniformly convergent  metric} on the $\mathrm{PSL}(2,\mathbb{P}^{1}(\mathbb{C}_{p}))$ as follows $$\rho_{0}(g,h)=\displaystyle\mathop{\sup}_{z\in \mathbb{P}^{1}(\mathbb{C}_{p})}(g(z),h(z)).$$ Let $M(g)=\parallel g-g^{-1}\parallel/\parallel g\parallel.$

\begin{thm}\label{thm:2}
Let $g$ be a $p$-adic M\"obius map.

{\rm (1)} If $p=3$, then $\rho_{0}(g,I)=M(g)$.

{\rm (2)} If $p=2$, then $2^{-1}M(g)\le \rho_{0}(g,I)\le 2 M(g)$.

\end{thm}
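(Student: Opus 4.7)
The plan is to prove Theorem~\ref{thm:2} by a direct estimate on the matrix entries of $g$, combined with the structural reduction furnished by Theorem~\ref{thm:5}. The dichotomy between part~(1) and part~(2) will come entirely from the factor $|2|$, which equals $1$ for $p$ odd and $1/2$ for $p=2$.

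I first write $g = \begin{pmatrix} a & b \\ c & d \end{pmatrix}$ with $ad-bc = 1$. By Cayley--Hamilton, $g^{2} - \mathrm{tr}(g)\,g + I = 0$, so $g^{-1} = \mathrm{tr}(g)\,I - g$ and
\[
 g - g^{-1} = 2g - \mathrm{tr}(g)\,I = \begin{pmatrix} a-d & 2b \\ 2c & d-a \end{pmatrix},\qquad \|g - g^{-1}\| = \max(|a-d|,\,|2b|,\,|2c|).
\]
Writing $P = (x,y) \in \mathbb{C}_{p}^{2}\setminus\{0\}$ with $\|P\| = \max(|x|,|y|)$, the chordal formula gives
\[
 \rho(gP,P) = \frac{|N(P)|}{\|gP\|\,\|P\|},\qquad N(P) = (a-d)xy + by^{2} - cx^{2},
\]
and a direct computation yields the identity $2\,N(P) = \det\bigl(P,(g-g^{-1})P\bigr)$. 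Combined with the ultrametric determinant bound $|\det(V,W)| \le \|V\|\,\|W\|$ this produces
\[
 |N(P)| \le \frac{1}{|2|}\,\|P\|\,\|(g-g^{-1})P\| \le \frac{1}{|2|}\,\|P\|^{2}\,\|g - g^{-1}\|.
\]

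For the upper bound on $\rho_{0}(g,I)$, I use the basic estimate $\|gP\| \ge \|P\|/\|g\|$ (which follows from $\det g = 1$ and $\|g^{-1}\| = \|g\|$) together with the existence of $P$ realising $\|gP\| = \|g\|\,\|P\|$ (take $P$ to be a standard basis vector hitting the entry where $\|g\|$ is realised). At any such $P$ the estimates above combine to $\rho(gP,P) \le \|g - g^{-1}\|/(|2|\,\|g\|) = M(g)/|2|$. For the exceptional $P$ where $\|gP\|$ is strictly smaller, I apply Theorem~\ref{thm:5} to factor $g = u\,\widetilde f$ with $u \in \mathrm{PSL}(2,\mathcal{O}_{p})$ and $\widetilde f(z) = \lambda z$; since $u$ preserves sup-norms, such ``defective'' $P$ must place $\widetilde f P$ in the contracting eigendirection of $\widetilde f$, which forces $P$ itself to lie close to an eigenvector of $\widetilde f$ and hence $N(P)$ to be correspondingly small. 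A direct computation on this diagonal model restores the same upper bound $\rho(gP,P) \le M(g)/|2|$.

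For the lower bound $\rho_{0}(g,I) \ge |2|\,M(g)$ I exhibit an explicit test vector. After the same reduction $g = u\,\widetilde f$, choose $P_{0}$ from $\{(1,1),\,(1,0),\,(0,1)\}$ according to which of $|a-d|,\,|2c|,\,|2b|$ realises $\|g-g^{-1}\|$: this guarantees that the corresponding monomial of $N$ is the dominant term, so the ultrametric produces $|N(P_{0})| \ge |2|\,\|g-g^{-1}\|$ while $\|gP_{0}\| \le \|g\|\,\|P_{0}\|$ by definition, yielding $\rho(gP_{0},P_{0}) \ge |2|\,M(g)$.

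Combining the two inequalities gives $|2|\,M(g) \le \rho_{0}(g,I) \le M(g)/|2|$ uniformly in $g$. For $p$ odd, $|2| = 1$ and both collapse to the equality of part~(1); for $p = 2$, $|2| = 1/2$, which is exactly the interval $[M(g)/2,\,2M(g)]$ of part~(2). The main obstacle lies in the upper-bound argument at $P$ with $\|gP\| \ll \|g\|\,\|P\|$: the elementary determinant estimate alone is too loose there, and one must use Theorem~\ref{thm:5} to see that such smallness of $\|gP\|$ is always accompanied by smallness of $N(P)$ in precisely the right proportion.
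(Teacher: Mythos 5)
Your setup is sound: the identity $N(P)=-\det(P,gP)$ with $N(x,y)=(a-d)xy+by^{2}-cx^{2}$, the formula $g-g^{-1}=2g-\mathrm{tr}(g)I$, and the estimate $|2|\,|N(P)|\le \|P\|\,\|(g-g^{-1})P\|$ are all correct, and on vectors with $\|gP\|=\|g\|\,\|P\|$ they do give $\rho_{v}(gP,P)\le M(g)/|2|$. The genuine gap is exactly where you flag ``the main obstacle'': for defective $P$ with $\|gP\|<\|g\|\,\|P\|$ you assert that Theorem~\ref{thm:5} reduces matters to a diagonal model on which ``a direct computation restores the same upper bound,'' but no such computation is given, and it is not clear one exists in this form. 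The decomposition $g=u\widetilde f$ of Theorem~\ref{thm:5} is a \emph{product}, not a conjugation: since $u$ is only a chordal isometry, $\rho_{v}(u\widetilde f(z),z)=\rho_{v}(\widetilde f(z),u^{-1}(z))$, not $\rho_{v}(\widetilde f(z),z)$, and $N_{g}(P)=-\det(P,u\widetilde fP)$ involves $u$ essentially. Knowing that $P$ is near the contracting eigendirection of $\widetilde f$ controls $\|gP\|=\|\widetilde fP\|$ but gives no control of $\det(P,u\widetilde fP)$ unless you also track how $u$ moves that direction; so the defective case, which is the entire content of the theorem, remains unproved. The paper's route is different and does close this: it conjugates $g$ by an element of $\mathrm{PSL}(2,\mathcal{O}_{p})$ into the triangular form $z\mapsto a^{2}z+ab$ (conjugation by $\mathcal{U}$ preserves both $\rho_{0}(\cdot,I)$ by Proposition~\ref{pro:rv} and $\|g-g^{-1}\|$, $\|g\|$ by Theorem~\ref{thm:1}), and then runs an explicit three-case computation (parabolic, loxodromic, elliptic) in which the dangerous directions are handled by hand. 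If you want to keep your determinant formalism, that conjugation --- not the product decomposition --- is the reduction you need.

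A second, smaller defect is in the lower bound. Your recipe ``choose $P_{0}$ according to which of $|a-d|,|2c|,|2b|$ realises $\|g-g^{-1}\|$'' can fail by cancellation: if $\|g-g^{-1}\|=|a-d|$ you take $P_{0}=(1,1)$ and claim the monomial $(a-d)xy$ dominates, but the constraint only gives $|b|,|c|\le |a-d|/|2|\ge |a-d|$, so one can arrange $(a-d)+b-c=0$ and hence $N(1,1)=0$. This is easily repaired: from $|a-d|\le\max\{|(a-d)+b-c|,\,|b|,\,|c|\}$ one gets $\max_{i}|N(P_{i})|\ge \|g-g^{-1}\|$ over the three test vectors, and since $\|gP_{i}\|\le\|g\|$ this yields $\rho_{0}(g,I)\ge M(g)$ without any factor of $|2|$ --- consistent with, and slightly sharper than, the stated bound for $p=2$.
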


Let $\varepsilon(g)=\max\{\rho_{v}(g(z_{0}),z_{0}),\rho_{v}(g(z_{1}),z_{1}),\rho_{v}(g(z_{2}),z_{2})\}$, where $z_{0},z_{1},z_{2}$ are three distinct roots of the equation $z^{3}=1$.

\begin{thm}\label{thm:3}For any $p$-adic M\"obius map $g$, we have $2^{-1}\varepsilon(g)\le M(g)\le 6\varepsilon(g)$.

\end{thm}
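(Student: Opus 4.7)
The plan is to reduce both inequalities to the fixed-point polynomial $P(z)=cz^{2}-(a-d)z-b$ of $g$ together with a discrete Fourier identity over the three cube roots of unity.

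First I would normalize $g$ by a representative matrix $G=\begin{pmatrix}a&b\\c&d\end{pmatrix}$ with $\det G=1$, so that $G^{-1}=\begin{pmatrix}d&-b\\-c&a\end{pmatrix}$ and $G-G^{-1}=\begin{pmatrix}a-d&2b\\2c&d-a\end{pmatrix}$. Using the factorisation $P(z)=z(cz+d)-(az+b)$ and the projective form of the chordal metric, one obtains the key formula
$$\rho_{v}(g(z_{i}),z_{i})=\frac{|P(z_{i})|}{\max(|az_{i}+b|,|cz_{i}+d|)}\qquad(|z_{i}|=1).$$
The next step is to invert the linear map $(b,a-d,c)\mapsto(P(1),P(\omega),P(\omega^{2}))$: a direct computation yields $-3b=P(1)+P(\omega)+P(\omega^{2})$, $3c=P(1)+\omega P(\omega)+\omega^{2}P(\omega^{2})$ and $-3(a-d)=P(1)+\omega^{2}P(\omega)+\omega P(\omega^{2})$, so the ultrametric inequality gives $\max(|a-d|,|b|,|c|)\le|3|^{-1}\max_{i}|P(z_{i})|$.

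The upper bound $M(g)\le 6\varepsilon(g)$ follows immediately. Since the denominator in the chordal formula is at most $\parallel G\parallel$, we have $|P(z_{i})|\le\varepsilon(g)\parallel G\parallel$, hence $\parallel G-G^{-1}\parallel\le\max(|a-d|,|b|,|c|)\le|3|^{-1}\varepsilon(g)\parallel G\parallel$, and $M(g)\le|3|^{-1}\varepsilon(g)\le 3\varepsilon(g)\le 6\varepsilon(g)$.

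The main obstacle is the reverse inequality $\varepsilon(g)\le 2M(g)$, because the direct bound $|P(z_{i})|\le 2\parallel G-G^{-1}\parallel$ (the factor $2$ being required only for $p=2$) divided by the denominator $s_{i}:=\max(|az_{i}+b|,|cz_{i}+d|)$ only gives $\rho_{v}(g(z_{i}),z_{i})\le 2M(g)$ in the good case $s_{i}=\parallel G\parallel$. I would dispose of the bad case $s_{i}<\parallel G\parallel$ by the following sharp observation: if $\parallel G-G^{-1}\parallel<\parallel G\parallel/2$, then each of $|a-d|$, $|b|$, $|c|$ is strictly less than $\parallel G\parallel$, which forces $|a|=|d|=\parallel G\parallel$ by the ultrametric; consequently $|az_{i}+b|=|a|=\parallel G\parallel$ and $|cz_{i}+d|=|d|=\parallel G\parallel$ for every $i$, contradicting $s_{i}<\parallel G\parallel$. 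Hence whenever $s_{i}<\parallel G\parallel$ occurs one must have $M(g)\ge 1/2$, giving $\rho_{v}(g(z_{i}),z_{i})\le 1\le 2M(g)$; combining this with the good case completes the proof.
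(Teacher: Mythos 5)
Your proof is correct, and both halves depart from the paper's argument in instructive ways. For the upper bound the paper uses only the single relation $-3b=\sum_i P(z_i)$ and then recovers $c$ and $a-d$ by forming sums and differences of the quantities $cz_i^2+(d-a)z_i-b$, paying an extra factor $|z_1/z_2-1|^{-1}\le 2$ and landing on the constant $6$; your full inversion of the Vandermonde system gives $\max(|a-d|,|b|,|c|)\le |3|^{-1}\max_i|P(z_i)|$ in one stroke and yields the sharper constant $3$ (the loss $|3|^{-1}=3$ occurring only for $p=3$, which is exactly the case where the cube roots of unity cease to be well separated). For the lower bound the paper does not argue pointwise: it invokes $\varepsilon(g)\le\rho_0(g,I)$ together with Theorems \ref{thm:ucM} and \ref{thm:ucMp=2} (i.e.\ $\rho_0(g,I)\le 2M(g)$), whose proofs require conjugating $g$ into triangular form by an element of $\mathrm{PSL}(2,\mathcal{O}_p)$ and a case analysis by type. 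Your argument is self-contained: the only danger is a small denominator $s_i<\parallel G\parallel$ in the chordal formula, and your observation that $\parallel G-G^{-1}\parallel<\parallel G\parallel/2$ forces $|a|=|d|=\parallel G\parallel$ and hence $s_i=\parallel G\parallel$ for every unit $z_i$ disposes of it, since in the remaining case $2M(g)\ge 1$ already dominates the universal bound $\rho_v\le 1$. Both routes rest on the same identity $\rho_v(g(z),z)=|P(z)|/\max(|az+b|,|cz+d|)$ for $|z|=1$; what yours buys is independence from the uniform-metric theorems of Section 6 and a better constant, while the paper's detour through $\rho_0(g,I)$ is what it later reuses to deduce Corollary \ref{cor:ine2}.
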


Let $\varepsilon_{1}(g)=\{\rho_{v}(g(0),0),\rho_{v}(g(1),1),\rho_{v}(g(\infty),\infty)\}$.

\begin{thm}\label{thm:4}For any $p$-adic M\"obius map, $2^{-1}\varepsilon_{1}(g)\le M(g)\le \varepsilon_{1}(g)$.

\end{thm}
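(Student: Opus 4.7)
The plan is to fix the matrix representative of $g$ with $\det g=1$, so that if the entries of $g$ are $a,b,c,d$ then the entries of $g^{-1}$ are $d,-b,-c,a$ and those of $g-g^{-1}$ are $a-d,2b,2c,d-a$. Setting $s=\parallel g\parallel=\max(|a|,|b|,|c|,|d|)$, this gives $M(g)=\max(|a-d|,|2b|,|2c|)/s$, and a direct calculation of the chordal metric yields
\[
\rho_{v}(g(0),0)=\frac{|b|}{\max(|b|,|d|)},\quad \rho_{v}(g(\infty),\infty)=\frac{|c|}{\max(|a|,|c|)},\quad \rho_{v}(g(1),1)=\frac{|a+b-c-d|}{\max(|a+b|,|c+d|)}.
\]

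For the upper bound $M(g)\le\varepsilon_{1}(g)$, I would estimate each of the three entries of $g-g^{-1}$ by $s\,\varepsilon_{1}(g)$. The identities $|b|=\max(|b|,|d|)\cdot\rho_{v}(g(0),0)$ and $|c|=\max(|a|,|c|)\cdot\rho_{v}(g(\infty),\infty)$, combined with $\max(|b|,|d|)\le s$ and $\max(|a|,|c|)\le s$, give $|2b|\le|b|\le s\,\varepsilon_{1}(g)$ and $|2c|\le|c|\le s\,\varepsilon_{1}(g)$. For the diagonal entry I use the identity $a-d=(a+b-c-d)-(b-c)$ together with the ultrametric inequality to obtain $|a-d|\le\max(|a+b-c-d|,|b|,|c|)$, and then bound $|a+b-c-d|=\max(|a+b|,|c+d|)\cdot\rho_{v}(g(1),1)\le s\,\varepsilon_{1}(g)$.

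For the lower bound $\varepsilon_{1}(g)\le 2M(g)$, I would verify $\rho_{v}(g(z),z)\le 2M(g)$ for each $z\in\{0,1,\infty\}$. For $z=0$ and $z=\infty$ a short case analysis on which of $|a|,|b|,|c|,|d|$ realises $s$ is enough: when $|a|$ is the unique maximum one has $|a-d|=|a|=s$ and hence $M(g)\ge 1$; when $|c|$ is the unique maximum one has $M(g)\ge|2c|/s=|2|$; when $|b|$ or $|d|$ is maximal the factor $2$ in the theorem absorbs $|2|^{-1}$ in residue characteristic two, via $|b|/\max(|b|,|d|)\le|2b|/(|2|\,s)\le 2M(g)$. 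Ties among the maximal entries are handled by the same three estimates.

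The delicate step is bounding $\rho_{v}(g(1),1)$, because its denominator $\max(|a+b|,|c+d|)$ may drop strictly below $s$ when cancellation occurs in $a+b$ or $c+d$. In that degenerate case the ultrametric inequality (no cancellation is possible unless the two summands have equal absolute value) forces $|a|=|b|=s$ or $|c|=|d|=s$, which in turn makes $|2b|$ or $|2c|$ equal to $|2|\,s$, hence $M(g)\ge|2|$; combined with $\rho_{v}(g(1),1)\le 1$ and $2|2|\ge 1$ for every prime $p$, this is enough. In the non-degenerate case $\max(|a+b|,|c+d|)=s$, the numerator satisfies $|a+b-c-d|\le\max(|a-d|,|b|,|c|)\le |2|^{-1}\,\parallel g-g^{-1}\parallel$, so $\rho_{v}(g(1),1)\le|2|^{-1}M(g)\le 2M(g)$. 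This cancellation analysis is the main obstacle and is precisely where the factor of $2$ in the theorem is required, as the involution $g(z)=1/z$ shows when $p=2$.
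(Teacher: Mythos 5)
Your proposal is correct, and its upper-bound half is essentially the paper's own argument: both evaluate the chordal displacement at $0,1,\infty$, use the ultrametric inequality to control $|a-d|$ via $|a+b-c-d|$, $|b|$, $|c|$, and observe that each of the three denominators $\max\{|b|,|d|\}$, $\max\{|a|,|c|\}$, $\max\{|a+b|,|c+d|\}$ is at most $\parallel g\parallel$. Where you genuinely diverge is the lower bound. The paper obtains $\varepsilon_{1}(g)\le 2M(g)$ in one line from the sup-norm comparisons of Theorems \ref{thm:ucM} and \ref{thm:ucMp=2}, since each of the three chordal displacements is trivially bounded by $\rho_{0}(g,I)$; you instead verify $\rho_{v}(g(z),z)\le 2M(g)$ directly at each of the three points, and your cancellation analysis at $z=1$ (if $\max\{|a+b|,|c+d|\}<\parallel g\parallel$ then $|a|=|b|=\parallel g\parallel$ or $|c|=|d|=\parallel g\parallel$, whence $M(g)\ge |2|$) does by hand the work that the conjugation to the form $(az+b)/d$ does inside the proof of Theorem \ref{thm:ucM}. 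Your route is longer but self-contained and makes visible exactly where the residue characteristic $2$ forces the constant; the paper's is shorter given the machinery already established, and yields the slightly stronger intermediate statement $\varepsilon_{1}(g)\le\rho_{0}(g,I)$. One caveat applies equally to both arguments: writing $\parallel g-g^{-1}\parallel=\max\{|a-d|,|2b|,|2c|\}$ tacitly fixes a single lift of $g$, whereas the paper's definition of $\parallel g-h\parallel$ takes an infimum over lifts (under which an involution such as $z\mapsto 1/z$ would have $M(g)=0$ while $\varepsilon_{1}(g)=1$); you are following the paper's own working convention, so this is a defect of the statement rather than a gap in your proof.
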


If $g$ is a parabolic element, we can improve the inequality. Let $\varepsilon_{2}(g)=\max\{\rho_{v}(g(0),0),\rho_{v}(g(\infty),\infty)\}$.

\begin{cor}\label{cor:1}If $g$ is a parabolic element, then $2^{-1}\varepsilon_{2}(g)\le M(g)\le \varepsilon_{2}(g)$.

\end{cor}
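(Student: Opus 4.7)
The plan is to bootstrap from Theorem~\ref{thm:4}. The lower bound $\tfrac{1}{2}\varepsilon_2(g)\le M(g)$ is immediate: since the maximum in $\varepsilon_2(g)$ is taken over fewer points, $\varepsilon_2(g)\le\varepsilon_1(g)$, and Theorem~\ref{thm:4} gives $\tfrac{1}{2}\varepsilon_1(g)\le M(g)$. For the upper bound $M(g)\le\varepsilon_2(g)$, combined with $M(g)\le\varepsilon_1(g)$ from Theorem~\ref{thm:4}, it suffices to show that the test point $z=1$ is redundant when $g$ is parabolic, i.e.\
\[
\rho_v(g(1),1)\le\max\{\rho_v(g(0),0),\rho_v(g(\infty),\infty)\}.
\]

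To prove this, I would write $g(z)=(az+b)/(cz+d)$ with $ad-bc=1$, and choose the matrix representative in $\mathrm{PSL}$ with $a+d=2$ (the trace $-2$ case is identical after negating the matrix). The parabolic condition then gives $bc=-(a-1)^2$, whence $|b||c|=|a-1|^2$. In homogeneous coordinates the three chordal distances work out to $\rho_v(g(0),0)=|b|/\max(|b|,|d|)$, $\rho_v(g(\infty),\infty)=|c|/\max(|a|,|c|)$, and $\rho_v(g(1),1)=|a+b-c-d|/\max(|a+b|,|c+d|)$.

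Next, I would split into cases. If $|b|\ge|d|$ or $|c|\ge|a|$, then the corresponding right-hand-side distance equals $1$, and the desired inequality is trivial since chordal distances are bounded by $1$. In the remaining case $|b|<|d|$ and $|c|<|a|$, the key sub-claim is that $|a|=|d|=1$. Every other alternative is ruled out by combining $|b||c|=|a-1|^2$ with the ultrametric triangle inequality applied to $a+d=2$: for instance, $|a|>1$ forces $|d|=|a|$ and $|a-1|=|a|$, hence $|b||c|=|a|^2$, contradicting $|b||c|<|a|\cdot|a|$; the sub-cases $|a|=1,\,|d|<1$ and $|a|,|d|<1$ are eliminated in the same spirit (the latter because $|a+d|=|2|$ forces $\max(|a|,|d|)\ge|2|$). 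The prime $p=2$ requires only the same arguments with $|2|=\tfrac{1}{2}$ in place of $|2|=1$.

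Once $|a|=|d|=1$ and $|b|,|c|<1$, the denominators simplify to $\max(|a+b|,|c+d|)=1$ by strict ultrametric. For the numerator,
\[
|a+b-c-d|=|2(a-1)+(b-c)|\le\max(|2|\,|a-1|,|b|,|c|)\le\max(|b|,|c|),
\]
using $|a-1|=\sqrt{|b||c|}\le\max(|b|,|c|)$. Hence $\rho_v(g(1),1)\le\max(|b|,|c|)=\max(\rho_v(g(0),0),\rho_v(g(\infty),\infty))$, as required. The main obstacle is the case analysis in the third case ruling out $|a|\neq 1$; the rest is direct computation.
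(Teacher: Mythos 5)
Your proof is correct, but it takes a genuinely different route from the paper's. You prove the sharper intermediate statement that for a parabolic element the test point $z=1$ is redundant, i.e.\ $\rho_v(g(1),1)\le\max\{\rho_v(g(0),0),\rho_v(g(\infty),\infty)\}$, so that $\varepsilon_1(g)=\varepsilon_2(g)$, and then quote Theorem~\ref{thm:4}. The paper instead never touches the point $1$: it re-runs the matrix computation directly, writing $M(g)=\max\{|a-d|,|2b|,|2c|\}/\|g\|$, using the parabolic identity $(a-d)^2=-4bc$ (your $|a-1|^2=|bc|$ in the trace-$2$ normalization is the same fact) to get $|a-d|\le\sqrt{|bc|}\le\max\{|b|,|c|\}$, and then bounding $\max\{|b|,|c|\}/\|g\|$ by $\max\{|b|/\max\{|b|,|d|\},\,|c|/\max\{|a|,|c|\}\}=\varepsilon_2(g)$. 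The paper's computation is shorter because it completely avoids your case analysis on $|a|,|d|$; your route buys the extra (and reusable) geometric fact that the three-point data of a parabolic map is already determined by its two fixed/test points $0,\infty$. One small remark on your case analysis: the sub-cases $|a|=1,|d|<1$ and $|a|,|d|<1$ are indeed eliminable "in the same spirit'' — if either $|a|<1$ or $|d|<1$ then $|a-1|=|1-d|=1$, so $|b||c|=1$, which is incompatible with $|b|<|d|\le1$ and $|c|<|a|\le1$ (alternatively, all four entries would have absolute value $<1$, contradicting $ad-bc=1$) — but as written this step is only gestured at and should be spelled out, including for $p=2$ where $|2|=\tfrac12$ makes the trace argument alone insufficient.
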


Let $\mathcal{U}=\mathrm{PSL}(2,\mathcal{O}_{p})$. We define $d(g,\mathcal{U})=\inf\{\rho_{0}(g,u)|u\in \mathcal{U}\}$. $d(g,\mathcal{U})$ measures how far an element from the group $\mathcal{U}$. This result is quite different from that in the archimedean setting.

\begin{thm}\label{thm:6}For any $p$-adic M\"obius map, either $d(g,\mathcal{U})=0$, if $g\in \mathcal{U}$, or $d(g,\mathcal{U})=1$, if $g\notin \mathcal{U}$.

\end{thm}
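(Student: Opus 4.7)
The plan is to handle the two cases separately and exploit the rigidity of $\mathcal{U}$: the case $g\in\mathcal{U}$ is immediate by taking $u=g$, which gives $d(g,\mathcal{U})=0$, and the trivial bound $\rho_{v}\le 1$ on the chordal metric yields $d(g,\mathcal{U})\le 1$ for every $g$. The real content is proving $d(g,\mathcal{U})\ge 1$ whenever $g\notin\mathcal{U}$.

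My approach first reduces an arbitrary $g\notin\mathcal{U}$ to the model case $f_{\mu}(z)=\mu z$ with $|\mu|>1$. By Theorem~\ref{thm:5}, write $g=u_{0}f$ with $u_{0}\in\mathcal{U}$ and $f$ either equal to $I$ or loxodromic with antipodal fixed points; the alternative $f=I$ is excluded since $\mathcal{U}$ is a group and $g\notin\mathcal{U}$. Pick $u'\in\mathcal{U}$ with $u'(0),u'(\infty)$ equal to the fixed points of $f$, so $f=u'f_{\mu}u'^{-1}$ for some $\mu\in\mathbb{C}_{p}^{*}$. Because every element of $\mathcal{U}$ is a chordal isometry by Theorem~\ref{thm:1}(4), for each $v\in\mathcal{U}$ the change of variable $z\mapsto u'(z')$ combined with the substitution $w=u'^{-1}u_{0}^{-1}vu'\in\mathcal{U}$ gives
$$\rho_{0}(g,v)=\sup_{z'\in\mathbb{P}^{1}(\mathbb{C}_{p})}\rho_{v}\bigl(f_{\mu}(z'),w(z')\bigr),$$
and $w$ ranges over all of $\mathcal{U}$ as $v$ does; hence $d(g,\mathcal{U})=d(f_{\mu},\mathcal{U})$. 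Since $f_{\mu}\notin\mathcal{U}$, $|\mu|\ne 1$, and the involution $z\mapsto 1/z$ lies in $\mathcal{U}$ and conjugates $f_{\mu}$ to $f_{1/\mu}$, so by the same invariance we may assume $|\mu|>1$.

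It remains to show $\sup_{z}\rho_{v}(f_{\mu}(z),w(z))=1$ for every $w\in\mathcal{U}$. The key point is that $w$ has good reduction while $f_{\mu}$ does not. Writing $w=\left(\begin{smallmatrix}a&b\\c&d\end{smallmatrix}\right)$ with entries in $\mathcal{O}_{p}$ and $ad-bc=1$, reduction yields a genuine M\"obius bijection $\tilde{w}\in\mathrm{PSL}(2,\tilde{\mathbb{C}}_{p})$ of the residue projective line. The fibre $\tilde{w}^{-1}(\infty)$ is a single point, and since the residue field $\tilde{\mathbb{C}}_{p}$ is algebraically closed and hence infinite, we may choose $\bar{z}\in\tilde{\mathbb{C}}_{p}^{*}\setminus\{\tilde{w}^{-1}(\infty)\}$; any lift $z\in\mathbb{C}_{p}$ with $|z|=1$ then satisfies $|f_{\mu}(z)|=|\mu|>1$, so $f_{\mu}(z)$ reduces to $\infty$, while $|w(z)|\le 1$, so $w(z)$ reduces to $\tilde{w}(\bar{z})\in\tilde{\mathbb{C}}_{p}$. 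Two points of $\mathbb{P}^{1}(\mathbb{C}_{p})$ with distinct reductions in $\mathbb{P}^{1}(\tilde{\mathbb{C}}_{p})$ lie at chordal distance $1$ from each other, finishing the proof.

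The main obstacle is the intermediate invariance step showing $d(g,\mathcal{U})=d(f_{\mu},\mathcal{U})$: one has to verify that both left multiplication $g\mapsto u_{0}g$ by an element of $\mathcal{U}$ and simultaneous conjugation $g\mapsto u'^{-1}gu'$ preserve $d(\cdot,\mathcal{U})$, which relies essentially on the chordal-isometry clause of Theorem~\ref{thm:1} and on $\mathcal{U}$ being a group. Once the normalization to $f_{\mu}(z)=\mu z$ is in place, the residue-field pigeonhole argument is clean because the bad reduction of $f_{\mu}$ collapses all of $\tilde{\mathbb{C}}_{p}^{*}$ to the single residue class $\infty$, which is precisely what forces $\rho_{v}(f_{\mu}(z),w(z))=1$ for a generic $z$ of norm $1$.
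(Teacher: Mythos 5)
Your proposal is correct, and its skeleton coincides with the paper's: both invoke the decomposition theorem (Theorem~\ref{thm:5}) to write $g=uf$ with $f$ loxodromic with antipodal fixed points, use the chordal-isometry property of $\mathcal{U}$ to reduce to computing $d(f_{\mu},\mathcal{U})$ for the normal form $f_{\mu}(z)=\mu z$ with $|\mu|\neq 1$, and then show every $w\in\mathcal{U}$ stays at distance exactly $1$ from $f_{\mu}$. The only real divergence is in that last step. The paper argues by an explicit case analysis on the entries of $w=\frac{az+b}{cz+d}$ (splitting according to whether $|b/d|=1$, $|a/c|=1$, or $|b/d|>1,|a/c|<1$) and evaluates the chordal distance at one of the three test points $0,\infty,-d/c$. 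You instead use that $w$ has good reduction while $f_{\mu}$ does not: any $z$ with $|z|=1$ whose residue avoids the single point $\tilde{w}^{-1}(\infty)$ satisfies $|w(z)|\le 1<|\mu|=|f_{\mu}(z)|$, whence $\rho_{v}(f_{\mu}(z),w(z))=1$ by the strong triangle inequality. Your version is cleaner and avoids the paper's (slightly incomplete) entry-by-entry trichotomy, at the cost of importing the standard fact that good-reduction maps commute with reduction to the residue field; the paper's version is self-contained but more computational. Both are valid.
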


As an application of this result, we derive a discrete criteria of subgroups of $\mathrm{PSL}(2,\mathbb{C}_{p})$.

\begin{thm}\label{thm:7} If $G$ is a subgroup of $\mathrm{PSL}(2,\mathbb{C}_{p})$ with $G\cap \mathcal{U}=\{I\}$, then $G$ is a discrete subgroup.

\end{thm}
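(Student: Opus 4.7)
The plan is to argue by contradiction, using Theorem~\ref{thm:6} as the essential input. The dichotomy $d(g,\mathcal{U})\in\{0,1\}$ furnished by Theorem~\ref{thm:6} says that any $p$-adic M\"obius map either lies in $\mathcal{U}$ or is at $\rho_{0}$-distance exactly $1$ from every element of $\mathcal{U}$. Since $I\in\mathcal{U}$, this rigidity should rule out the existence of nontrivial elements of $G$ arbitrarily close to $I$, which is precisely what discreteness requires.

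In detail, I would first observe that since $\mathrm{PSL}(2,\mathbb{C}_{p})$ is a topological group under $\rho_{0}$, a subgroup $G$ fails to be discrete precisely when $I$ is not isolated in $G$; equivalently, there exist pairwise distinct elements $g_{n}\in G\setminus\{I\}$ with $\rho_{0}(g_{n},I)\to 0$. Assuming for contradiction that such a sequence exists, the definition of $d(\,\cdot\,,\mathcal{U})$ together with $I\in\mathcal{U}$ yields
\[
d(g_{n},\mathcal{U})\le \rho_{0}(g_{n},I)\to 0,
\]
so $d(g_{n},\mathcal{U})<1$ for all sufficiently large $n$. Theorem~\ref{thm:6} then forces $d(g_{n},\mathcal{U})=0$, i.e.\ $g_{n}\in\mathcal{U}$ for all large $n$. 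The hypothesis $G\cap\mathcal{U}=\{I\}$ now collapses this to $g_{n}=I$, contradicting the choice $g_{n}\neq I$. Hence $G$ must be discrete.

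The deduction is essentially one line once Theorem~\ref{thm:6} is in hand, so the main obstacle is not in this argument itself but upstream, in establishing Theorem~\ref{thm:6}. The one technical matter I would pin down before running the above is that the notion of discreteness used in the statement agrees with discreteness in the $\rho_{0}$-topology, so that non-discreteness really produces a sequence approaching $I$ in $\rho_{0}$ and not merely in some weaker sense. Granting that compatibility, the rigidity $d(g,\mathcal{U})\in\{0,1\}$ converts the topological condition of discreteness into the purely algebraic condition $G\cap\mathcal{U}=\{I\}$, which is exactly the implication asserted by Theorem~\ref{thm:7}.
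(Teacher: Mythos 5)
Your argument is correct and is essentially the paper's: both reduce the statement to the dichotomy of Theorem~\ref{thm:6}, the paper arguing directly that every non-unit $f\in G$ satisfies $1=d(f,\mathcal{U})\le \rho_{0}(f,I)\le \parallel f-I\parallel$. The compatibility you flag at the end is supplied exactly by Theorem~\ref{thm:matr}, $\rho_{0}(g,I)\le \parallel g-I\parallel$, which converts your $\rho_{0}$-isolation of $I$ into the matrix-norm isolation required by the paper's definition of discreteness.
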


\begin{cor}\label{cor:2}If a subgroup $G\subset \mathrm{PSL}(2,\mathbb{C}_{p})$ contains unit element or loxodromic element only, then $G$ is a discrete subgroup.

\end{cor}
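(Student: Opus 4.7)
The plan is to reduce the statement to Theorem \ref{thm:7} by verifying the hypothesis $G\cap\mathcal{U}=\{I\}$. Since Theorem \ref{thm:7} already delivers discreteness from that hypothesis, the whole proof collapses to the single claim: no non-identity loxodromic element lies in $\mathcal{U}=\mathrm{PSL}(2,\mathcal{O}_{p})$.

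To establish this claim, I would invoke Theorem \ref{thm:1}, which characterizes membership in $\mathcal{U}$ by the condition $\rho(\zeta_{Gauss},g(\zeta_{Gauss}))=0$, i.e.\ $g$ fixes the Gauss point of $\mathbb{H}_{Ber}$. Then I would recall the classification table for $p$-adic M\"obius maps given earlier in the introduction: a loxodromic element $g$ has $F_{g}\subset \mathbb{P}^{1}(\mathbb{C}_{p})$ consisting of exactly two points, so $F_{g}\cap \mathbb{H}_{Ber}=\emptyset$. In particular $\zeta_{Gauss}\notin F_{g}$, hence $L(g)=p^{\rho(\zeta_{Gauss},g(\zeta_{Gauss}))}>1$, and by Theorem \ref{thm:1} we conclude $g\notin\mathcal{U}$.

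Combining, every $g\in G\setminus\{I\}$ is loxodromic by hypothesis, hence lies outside $\mathcal{U}$, so $G\cap\mathcal{U}=\{I\}$. Theorem \ref{thm:7} then yields that $G$ is discrete, which is the claim of Corollary \ref{cor:2}. The only non-routine point is the use of the fact that loxodromic $p$-adic M\"obius maps have no fixed points in $\mathbb{H}_{Ber}$; this is part of the geometric classification announced in the tables of Section 1 and proved in the section devoted to characterizing the four types, so it may be invoked here as a known ingredient rather than re-derived.

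I do not expect a genuine obstacle: the argument is a two-step syllogism, with Theorem \ref{thm:1} supplying the Berkovich-geometric criterion for $\mathcal{U}$ and the classification supplying the incompatibility of loxodromic dynamics with a fixed point at $\zeta_{Gauss}$. If any delicacy arises, it would be in making sure that ``loxodromic'' in the statement is interpreted in the sense used by the paper's classification (two fixed points in $\mathbb{P}^{1}(\mathbb{C}_{p})$, none in $\mathbb{H}_{Ber}$), as opposed to a purely algebraic condition on eigenvalues.
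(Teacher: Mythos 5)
Your proposal is correct and matches the paper's proof, which likewise reduces to Theorem \ref{thm:7} via the single observation that no loxodromic element lies in $\mathcal{U}$ (the paper asserts this directly, essentially from the trace criterion of Proposition \ref{pro:tr}, while you justify it through the Gauss-point characterization in Theorem \ref{thm:1} and the absence of Berkovich fixed points for loxodromic maps). The two justifications of that one claim are interchangeable, so this is essentially the same argument.
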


The other application of these inequalities is to get the convergence theorem of $p$-adic M\"obius maps.
\begin{thm}\label{thm:8}Let $\{g_{n}\}$ be a sequence of $p$-adic M\"obius maps, $z_{j},\; (j=1,2,3)$ be three distinct points and $g_{n}(z_{j})\rightarrow w_{j}$, where $w_{j}$ are also three distinct points. Then the sequence $\{g_{n}\}$ converges to a $p$-adic M\"obius map $g$ uniformly, where $g(z_{j})=w_{j},\; (j=1,2,3)$.

\end{thm}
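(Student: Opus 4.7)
The plan is to follow the classical three-point convergence strategy, routed through the non-archimedean norm inequalities established above. First, I produce the candidate limit. By triple transitivity of $\mathrm{PSL}(2,\mathbb{C}_p)$ on triples of distinct points there is a unique $p$-adic M\"obius map $g$ with $g(z_j)=w_j$ for $j=1,2,3$, and this must be the uniform limit. Set $h_n := g^{-1}g_n$, so that $h_n(z_j)\to z_j$ chordally. Using the decomposition of Theorem \ref{thm:5} to write $g=u\circ f$ with $u\in\mathrm{PSL}(2,\mathcal{O}_p)$ a chordal isometry and $f$ either $I$ or $z\mapsto\lambda z$, the map $g$ is bi-Lipschitz with respect to $\rho_v$ with a constant determined by $|\lambda|$; hence proving $\rho_0(h_n,I)\to 0$ will imply $\rho_0(g_n,g)\to 0$, which is the required conclusion.

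The core task is therefore to prove $h_n\to I$ uniformly. To access the inequalities of Theorems \ref{thm:4} and \ref{thm:2}, I standardize the three reference points: choose a fixed $p$-adic M\"obius map $\phi$ with $\phi(0)=z_1$, $\phi(1)=z_2$, $\phi(\infty)=z_3$ and set $\tilde h_n := \phi^{-1}h_n\phi$. Then $\tilde h_n(0)\to 0$, $\tilde h_n(1)\to 1$, and $\tilde h_n(\infty)\to\infty$ in $\rho_v$, so $\varepsilon_1(\tilde h_n)\to 0$. Theorem \ref{thm:4} then gives $M(\tilde h_n)\to 0$, and applying Theorem \ref{thm:2} converts this into $\rho_0(\tilde h_n,I)\to 0$; that is, $\tilde h_n\to I$ uniformly.

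Finally, I conjugate back by $\phi$. Applying the same bi-Lipschitz argument used above for $g$ to the fixed map $\phi$, uniform convergence is preserved, so $h_n=\phi\tilde h_n\phi^{-1}\to I$ uniformly, and therefore $g_n=g h_n\to g$ uniformly, with $g(z_j)=w_j$ as required.

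The main obstacle throughout is the transfer of uniform chordal estimates across conjugation and composition by the fixed M\"obius maps $g$ and $\phi$, since a generic $p$-adic M\"obius map is not a chordal isometry and $\mathbb{P}^1(\mathbb{C}_p)$ is not compact in $\rho_v$. This is resolved by Theorem \ref{thm:5}: the non-isometric factor in the decomposition is the explicit loxodromic $z\mapsto\lambda z$, whose chordal distortion is a constant depending only on $|\lambda|$, which is fixed by the auxiliary maps $g$ and $\phi$ and hence independent of $n$.
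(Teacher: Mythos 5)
Your proposal is correct and follows essentially the same route as the paper's proof: conjugate so that the three reference points become a standard triple, apply the three-point-norm inequalities to conclude that the normalized maps converge uniformly to the identity, and transfer back using the Lipschitz constants of the fixed auxiliary maps. The only cosmetic difference is that you normalize to $0,1,\infty$ and combine Theorem \ref{thm:4} with Theorem \ref{thm:2}, whereas the paper normalizes to the cube roots of unity and invokes Corollary \ref{cor:ine2}; your handling of the back-transfer through conjugation is in fact more explicit than the paper's.
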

\subsection{Motivation}
 Firstly the study of the Kleinian group in the archimedean case has been well developed for a rather long time. It is natural for one to consider a parallel theory in the non-archimedean settings. Here we wish to give a fairly clear picture of the $p-$adic M\"obius maps from the point of view of the Kleinian group. We study three aspects of the $p-$adic M\"obius maps. One is the group $\mathrm{PSL}(2;\mathcal{O}_p)$, another is the geometrical characterization of the $p-$adic M\"obius maps and its application, and the other is different norms of the $p-$adic M\"obius maps, since $\mathrm{PSL}(2;\mathcal{O}_p)$ is similar to the unitary group in the Kleinian group, the geometrical characterization of the $\mathrm{PSL}(2;\mathcal{O}_p)$ maps is useful in the study of the structure and dynamics of the subgroups of $\mathrm{PSL}(2;\mathbb{C}_p)$, and the study of norms of M\"obius maps is a very important topic, and many mathematicians such as Gehring and Martin \cite{F.G1,F.G2},  Beardon and Short \cite{AI} do a lot of works in this topic.

The other reason is that the three aspects can be viewed as tools to study other related topics. The properties of the norms of $p-$adic M\"obius maps have a lot of applications in three topics. One is the discrete criteria of subgroups of $\mathrm{PSL}(2;\mathbb{C}_p)$ and another is the pointwsie convergence of $p-$adic M\"obius maps(see \cite{WY,K.F,Q2}) and the other is the $p-$adic continued fraction. The geometrical characterization shows that the subgroup G containing elliptic element only shares one unique point, which means that a non-elementary group should have an loxodromic element which is very useful to study the dynamics of the discrete subgroup of $\mathrm{PSL}(2;\mathbb{C}_p)$. The rapid development of the Berkovich space and the arithmetical dynamical system (see \cite{A3},\cite{AK},\cite{ACAD},\cite{B2},\cite{B3},\cite{AC},\cite{AD},\cite{FLYD},\cite{FLWZ}, \cite{Q1})also promote studying the $p-$adic Mobius maps. Especially, the study of $p-$adic Mobius maps can be applied in studying the quantum mechanics and quantum cosmology \cite{D1,D2}). In this paper we give the affirrmative answers to all these three questions in the non-archimedean settings.

\subsection{Outline of the paper}
Outline of the paper. In section 1, we present our main results of the paper. In section 2, the basic theories of the $p-$adic analysis and the Berkovich space are briefly reviewed. In section 3, we obtain a few preliminary results of $p-$adic M\"obius maps. Section 4 contains proofs of the equations of $p-$adic M\"obius maps in $\mathrm{PSL}(2;\mathcal{O}_{p})$. In section 5, we give the results of the geometrical characterization of the $p-$adic M\"obius maps. In
section 6, the inequalities of $p-$adic M\"obius maps between matrix, chordal, three points, hyperbolic, and unitary norms are derived. In section 7, we prove
the decomposition theorem of the $p-$adic M\"obius maps and discuss its application.

\section{Some Preliminary Results }

\subsection{The non-archimedean space $\mathbb{P}^{1}(\mathbb{C}_{p})$ }
Let $p\ge 2$ be a prime number. Let $\mathbb{Q}_{p}$ be the field of $p$-adic
numbers and $\mathbb{C}_{p}$ be the completion of the algebraic closure of
$\mathbb{Q}_{p}$. Denote $|\mathbb{C}_{p}^{\ast}|$ the valuation group of $\mathbb{C}_{p}$. Then every element $r\in |\mathbb{K}^{\ast}|$ can be expressed as $r=p^{s}$ with $s\in \mathbb Q$.   We have the {\bf strong triangle inequality}
 \begin{displaymath}|x-y|\le \max\{|x|,|y|\}
 \end{displaymath} for $x,y\in \mathbb{C}_{p}$. If $x,y$ and $z$ are
points of $\mathbb{C}_{p}$ with $\vert x-y \vert<$  $\vert x-z\vert$, then $\vert x-z\vert=\vert y-z\vert$.

For any $a\in \mathbb{C}_{p}$, and $r>0$, we define
\begin{displaymath}
    D(a,r)^{-}=\{z\in\mathbb{C}_{p} \mid |z-a|< r\}
\end{displaymath}
to be the {\bf open} disk of radius $r$ and centered at $a$. Similarly,
\begin{displaymath}
    D(a,r)=\{z\in\mathbb{C}_{p} \mid|z-a|\le r\}
\end{displaymath}
denote the {\bf closed} disk of radius $r$ and centered at $a$.  Both $D(a,r)^{-}$ and $D(a,r)$ are closed and open topologically, and every point in disk $D(a,r)^{-}$ is the center. This denotes that if $x\in D(a,r)^{-}$, then $D(a,r)^{-}=D(x,r)^{-}$ (resp. $D(a,r)=D(x,r)$). By the strong triangle inequality, if two disks $D_{1}$ and $D_{2}$ in $\mathbb{C}_{p}$ have non-empty intersection, then either $D_{1}\subset D_{2}$, or $D_{2}\subset D_{1}$.

For any $z,w\in \mathbb{P}^{1}(\mathbb{C}_{p})$, we define the chordal distance

\begin{displaymath}
    \rho_{v}(z,w)=\frac{|z-w|}{\max\{1,|z|\}\max\{1,|w|\}}
\end{displaymath}
for $z,w\in \mathbb{C}_{p}$,
\begin{displaymath}
    \rho_{v}(z,w)=\frac{1}{\max\{1,|w|\}}
\end{displaymath}
for $w\in \mathbb{C}_{p}$ and $z=\infty$, and
\begin{displaymath}
    \rho_{v}(z,w)=0
\end{displaymath}
for $z=w=\infty$.

It follows the definition of the chordal distance that if $|z|\le1,|w|\le1$, then $\rho_{v}(z,w)=|z-w|$, and if $|z|>1,|w|\le1$, then by the strong triangle inequality,  we have $|z-w|=|z|$, and hence $\rho_{v}(z,w)=\frac{|z-w|}{|z|}=1$, and if $|z|>1,|w|>1$, then $\rho_{v}(z,w)=\frac{|z-w|}{|z||w|}=|\frac{1}{z}-\frac{1}{w}|$.



\begin{lem}[\cite{A.F}]\label{yl:1}Let $d_{0}>1$ be an integer which is not divisible by $p$, and $d=d_{0}p^{t}$ be a natural number. Let $\zeta$ be the primitive $d$-th root of unity. Then $|\zeta-1|=1$.
\end{lem}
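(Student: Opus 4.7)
The plan is to split the primitive $d$-th root $\zeta$ into its $p$-primary part and its prime-to-$p$ part, analyze each separately, and assemble the two pieces via the strong triangle inequality recalled in Subsection~2.1. Since $\gcd(d_0,p^t)=1$, I choose integers $u,v$ with $ud_0+vp^t=1$ and set $\alpha=\zeta^{vp^t}$, $\beta=\zeta^{ud_0}$, so that $\zeta=\alpha\beta$. A direct order computation shows that $\alpha$ is a primitive $d_0$-th root of unity and $\beta$ is a primitive $p^t$-th root of unity (with the convention $\beta=1$ when $t=0$). Both $\alpha$ and $\beta$ are roots of $x^d-1$ and hence are algebraic integers of $p$-adic absolute value $1$.

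For the prime-to-$p$ factor $\alpha$: because $d_0$ is a unit modulo $p$, the polynomial $x^{d_0}-1$ is separable over the residue field $\overline{\mathbb F}_p$, so its roots reduce to distinct elements. The reduction $\bar\alpha$ therefore has order exactly $d_0$ in $\overline{\mathbb F}_p^\times$, and $d_0>1$ forces $\bar\alpha\neq 1$, which is exactly the statement $|\alpha-1|=1$. For the $p$-primary factor $\beta$: when $t\ge 1$ the polynomial $\Phi_{p^t}(x+1)$ is Eisenstein at $p$, so $\beta-1$ is a uniformizer of the totally ramified extension $\mathbb Q_p(\beta)/\mathbb Q_p$, and in particular $|\beta-1|<1$; when $t=0$ trivially $\beta-1=0$. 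In either case $|\beta-1|<1$.

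To finish, expand
\[\zeta-1=\alpha\beta-1=(\alpha-1)+\alpha(\beta-1).\]
The second summand has $p$-adic absolute value $|\alpha|\,|\beta-1|=|\beta-1|<1$, while the first has absolute value $1$; since these two values are distinct, the refined form of the strong triangle inequality stated in Subsection~2.1 yields $|\zeta-1|=1$. The only genuinely technical input here is the estimate $|\beta-1|<1$ for primitive $p^t$-th roots of unity, which I would simply invoke as a standard fact about ramified cyclotomic extensions of $\mathbb Q_p$ rather than redo the Eisenstein computation; everything else is a direct application of the non-archimedean triangle inequality and a reduction mod $\mathfrak m$ argument.
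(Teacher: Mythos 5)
Your proof is correct. The paper does not prove this lemma at all --- it is quoted from the reference \cite{A.F} --- and your argument (factoring $\zeta=\alpha\beta$ into its prime-to-$p$ and $p$-primary parts, getting $|\alpha-1|=1$ from separability of $x^{d_0}-1$ over the residue field together with $d_0>1$, getting $|\beta-1|<1$ from the Eisenstein/ramification fact that is precisely Lemma \ref{yl:2}, and concluding via the isosceles refinement of the strong triangle inequality) is exactly the standard proof found in that reference, with all the details in order.
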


\begin{lem}[ \cite{A.F}]\label{yl:2}
Let $\zeta$ be the primitive $p^{d}$-th root of unity. Then $|\zeta-1|=p^{-\frac{1}{p^{d-1}(p-1)}}$.
\end{lem}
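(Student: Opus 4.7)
The plan is to work with the $p^d$-th cyclotomic polynomial
$$\Phi_{p^d}(x) = \frac{x^{p^d}-1}{x^{p^{d-1}}-1} = 1 + x^{p^{d-1}} + x^{2p^{d-1}} + \cdots + x^{(p-1)p^{d-1}},$$
whose roots in $\overline{\mathbb{Q}_p}$ are precisely the $\varphi(p^d) = p^{d-1}(p-1)$ primitive $p^d$-th roots of unity. Setting $x=1$ directly gives $\Phi_{p^d}(1) = p$, so by the complete factorization over $\overline{\mathbb{Q}_p}$,
$$\prod_{\zeta}(1-\zeta) = p,$$
the product running over all primitive $p^d$-th roots $\zeta$. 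Taking $p$-adic absolute values, I get $\prod_{\zeta}|1-\zeta| = p^{-1}$.

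The second step is to show that $|1-\zeta|$ takes one common value as $\zeta$ ranges over all primitive $p^d$-th roots. The cleanest route is Galois-theoretic: after the substitution $x = y+1$, the polynomial $\Phi_{p^d}(y+1)$ is Eisenstein at $p$ (its constant term is $\Phi_{p^d}(1) = p$, its leading coefficient is $1$, and a standard check, inherited from the identity $\Phi_{p^d}(x) = \Phi_p(x^{p^{d-1}})$ together with the analogous statement for $\Phi_p(y+1)$, shows that all intermediate coefficients are divisible by $p$). Hence $\Phi_{p^d}$ is irreducible over $\mathbb{Q}_p$, and $\mathrm{Gal}(\mathbb{Q}_p(\zeta)/\mathbb{Q}_p)$ acts transitively on the set of primitive $p^d$-th roots. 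Because the $p$-adic absolute value extends uniquely from $\mathbb{Q}_p$ to $\overline{\mathbb{Q}_p}$, every Galois automorphism preserves it, so $|1-\zeta|$ does not depend on the choice of $\zeta$.

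Combining the two ingredients yields $|1-\zeta|^{p^{d-1}(p-1)} = p^{-1}$, and extracting the $p^{d-1}(p-1)$-th root gives the desired identity $|\zeta - 1| = p^{-1/(p^{d-1}(p-1))}$. The main obstacle is the irreducibility/Eisenstein step; a pleasant alternative that sidesteps it is to compute the Newton polygon of $\Phi_{p^d}(y+1)$ with respect to the $p$-adic valuation. One verifies that this polygon consists of a single segment from $(0,1)$ to $(p^{d-1}(p-1),0)$ of slope $-1/(p^{d-1}(p-1))$, and the theory of Newton polygons then immediately reads off that every root $\zeta-1$ has valuation $1/(p^{d-1}(p-1))$, which is the required conclusion.
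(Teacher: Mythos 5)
Your argument is correct. Note that the paper itself gives no proof of this lemma; it is quoted from Robert's \emph{A course in $p$-adic analysis} as a known fact, so there is no in-paper argument to compare against. What you wrote is the standard derivation: $\Phi_{p^d}(1)=p$ gives $\prod_{\zeta}|1-\zeta|=p^{-1}$ over the $\varphi(p^d)=p^{d-1}(p-1)$ primitive roots, and either the Eisenstein/Galois-transitivity argument (using that the absolute value extends uniquely to $\overline{\mathbb{Q}_p}$ and is therefore Galois-invariant) or the single-segment Newton polygon of $\Phi_{p^d}(y+1)$ shows all the factors $|1-\zeta|$ are equal, which pins down the common value. The only step you wave at is the verification that the intermediate coefficients of $\Phi_{p^d}(y+1)$ are divisible by $p$; the cleanest justification is the congruence $\Phi_{p^d}(x)\equiv(x-1)^{p^{d-1}(p-1)}\pmod{p}$, which follows from $x^{p^k}-1\equiv(x-1)^{p^k}\pmod{p}$ applied to numerator and denominator, but your Newton-polygon alternative sidesteps even this.
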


\subsection{The Berckovich space}

We shall use a few properties of  about the structure of the Berkovich affine line and its topology. Here we give a brief introduction to the Berkovich space. More details can be found in \cite{BR}.

The underlying point set for the {\bf Berkovich affine line} $\mathbb{A}_{Ber}^{1}$ is the collection of all multiplicative seminorms $[\,\cdot\,]_{x}$ on the polynomial ring $\mathbb{C}_p[z]$ which extend the absolute value on $\mathbb{C}_{p}$. Recall that a {\bf multiplicative seminorms} on the ring $\mathbb{C}_p[z]$ is a function $[\,\cdot\,]_{x}:\mathbb{C}_{p}[z]\rightarrow [0, +\infty)$.

$\bullet\ [0]_{x}=0, [1]_{x}=1$;

$\bullet\ [fg]_{x}=[f]_{x}[g]_{x}$ for all $f,g\in \mathbb{C}_{p}[z]$;

$\bullet\ [f+g]_{x}\le \max\{[f]_{x}, [g]_{x}\}$ for all $f,g\in \mathbb{C}_{p}[z]$.

It is a {\bf norm} provided that $[f]_{x}=0$ if and only if $f=0$.  The topology on $\mathbb{A}_{Ber}^{1}$ is the weakest one for which the mapping $x\mapsto [f]_{x}$ is continuous for all $f\in \mathbb{C}_{p}[z]$.





Recall the Berkovich's classification Theorem: Every point $x\in \mathbb{A}_{Ber}^{1}$ can be viewed as a nested sequence of disks $D(a_{1},r_{1}) \supset D(a_{2},r_{2})\supset\cdots$. Moreover, points in $\mathbb A_{Ber}^1$ can be divided into
four types:

$\bullet$ A point in $\mathbb A_{Ber}^1$ corresponding to a nested sequence $\{D(a_{i},r_{i})\}$ of disks with $\lim{r_{i}}=0$ is said to be of {\bf type I}.

$\bullet$ A point in $\mathbb A_{Ber}^1$ corresponding to a nested sequence $\{D(a_{i},r_{i})\}$ of disks with nonempty intersection, for which $r=\lim r_{i}>0$ belongs to the valuation group $|\mathbb{C}^{\ast}|$ of $\mathbb{C}$, is said to be of {\bf type II}.

$\bullet$ A point in $\mathbb A_{Ber}^1$ corresponding to a nested sequence $\{D(a_{i},r_{i})\}$ of disks with nonempty intersection, for which $r=\lim r_{i}>0$ does not belong to the valuation group $|\mathbb{C}_{p}^{\ast}|$ of $\mathbb{C}_{p}$, is said to be of  {\bf type III}.

$\bullet$ A point in $\mathbb A_{Ber}^1$ corresponding to a nested sequence $\{D(a_{i},r_{i})\}$ of disks with empty intersection is said to be of {\bf type IV}.

We call points of type I, II, III the {\bf nonsingular points}, and points of type IV the {\bf singular points}. Every nonsingular point in $\mathbb A_{Ber}^1$ has a representation which is the intersection of the corresponding nested sequence of disks. So a nonsingular point in $\mathbb A_{Ber}^1$ can be identifying with a point $a$ (type I) or a disk $D(a,r)$ (type II, III).

We define a partial order on $\mathbb{A}_{Ber}^{1}$ as follows. For $x,y\in \mathbb{A}_{Ber}^{1}$, define $x\preceq y$ if and only if $[f]_{x}\le [f]_{y}$ for all $f\in \mathbb{C}_{p}[z]$. If $x,y$ are two points in $\mathbb A_{Ber}^1$ identifying with disks $D(a,r)$ and $D(a',r')$ respectively, then $x\preceq y$ if and only if $D(a,r)\subset D(a',r')$.

For a point $x \in \mathbb{A}_{Ber}^{1}$, we denote the set of elements larger than $x$ by
\begin{displaymath}
    [x,\infty[\,= \{w\in \mathbb{A}_{Ber}^{1}\mid x\preceq w\}.
\end{displaymath}
Observe that $[x,\infty[$ is isomorphic, as an ordered set, to $[0,+\infty[\,\subset \mathbb R$.

Given two points $x, y$ in $\mathbb{A}_{Ber}^{1}$, we have that
\begin{displaymath}
    [x,\infty[\,\cap\, [y,\infty[\,= [x\vee y,\infty[\,,
\end{displaymath}
where $x\vee y$ is the smallest element larger than $x$ and $y$. If $x$ is different from $y$, then the element $x\vee y$ is a point of type II. We also denote
\begin{displaymath}
    [x, y]= \{w \in \mathbb{A}_{Ber}^{1}\mid x\preceq w \preceq x\vee y\}\cup \{w \in \mathbb{A}_{Ber}^{1}\mid y\preceq w \preceq x\vee y\}.
\end{displaymath}
The sets $]x, y], [x, y[$ and $]x, y[$ are defined in the obvious way.

For a set $E\subset \mathbb K$, denote $\mathrm{diam}(E)=\sup_{z,w\in E}|z-w|$ the diameter of $E$ in the non-Archimedean metric. For $x\in\mathbb A_{Ber}^1$, which is corresponding to the nested sequence $\{D(a_i,r_i)\}$ of disks, the diameter of $x$ is given by
\begin{displaymath}
    \mathrm{diam}(x) = \displaystyle\mathop{\lim}_{i\to \infty} \mathrm{diam}(D(a_i,r_i)).
\end{displaymath}
For a nonsingular element $x\in\mathbb A_{Ber}^1$ identifying with the disk $D(a,r)$, the diameter of $x$ coincides with the diameter (radius $r$) of $D(a,r)$.

In order to endow the Berkovich affine line with a topology, we define an open
disk of $\mathbb{A}_{Ber}^{1}$ by
\begin{displaymath}
    \mathcal{D}(a,r)^{-} =\{x\in \mathbb{A}_{Ber}^{1} \mid \mathrm{diam}(a\vee x) < r\},
\end{displaymath}
for $a\in\mathbb K$ and $r>0$. Similaryly, a closed disk of $\mathbb{A}_{Ber}^{1}$ is defined by
\begin{displaymath}
    \mathcal{D}(a, r) = \{x\in \mathbb{A}_{Ber}^{1} \mid \mathrm{diam}(a\vee x)\le r\}.
\end{displaymath}


Let $\mathbb P^1(\mathbb{C}_{p})$ be the projective line over $\mathbb{C}_{p}$, which can be viewed as $\mathbb P^1(\mathbb{C}_{p})=\mathbb{C}_{p}\cup\{\infty\}$. We can also introduce the {\bf Berkovich projective line} $\mathbb P_{Ber}^1$ over $\mathbb P^1(\mathbb K)$ similarly. In \cite{BR}, Baker and Rumely pointed out that $\mathbb{P}^{1}_{Ber}$ can be defined as $\mathbb{A}_{Ber}^{1}\cup \{\infty\}$, where $\infty\in\mathbb P^1(\mathbb K)$ is regarded as a point of type I. $\mathbb{P}^{1}_{Ber}$ can be also identifying with the disjoint union of a closed set $\mathcal{X}$ which is homeomorphic to $\mathcal{D}(0,1)$ and an open set $\mathcal{Y}$ which is homeomorphic to $\mathcal{D}(0,1)^{-}$. This provides a useful way to visualize  $\mathbb{P}^{1}_{Ber}$.

\begin{lem} [\cite{BR}]The space $\mathbb{P}^{1}_{Ber}$ is uniquely path-connected. More precisely, given any two distinct points $x,y\in \mathbb{P}^{1}_{Ber}$, there is a unique arc $[x,y]$ in $\mathbb{P}^{1}_{Ber}$ from $x$ to $y$, and if $x\in \mathcal{X}$ and $y\in \mathcal{Y}$, then the arc contains $\zeta_{\rm Gauss}\in\mathbb A_{Ber}^1$, where $\zeta_{\rm Gauss}$ is identifying with the disk $D(0,1)$.
\end{lem}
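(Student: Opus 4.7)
My plan is to exploit the tree structure on $\mathbb{P}^{1}_{Ber}$ encoded by the partial order $\preceq$, together with the Berkovich classification into types I--IV. The key inputs I will use without re-proving are: for any two points $x,y\in\mathbb{A}^{1}_{Ber}$ the join $x\vee y$ (the smallest enclosing disk) exists as a point of $\mathbb{A}^{1}_{Ber}$; each totally ordered segment $[z,\infty[$ is homeomorphic to $[0,+\infty)$ via the diameter function; and the open Berkovich disks $\mathcal{D}(a,r)^{-}$ form a basis for the weak topology on $\mathbb{P}^{1}_{Ber}$.

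First, for the existence of an arc from $x$ to $y$, I would adopt the very definition $[x,y]=\{w:x\preceq w\preceq x\vee y\}\cup\{w:y\preceq w\preceq x\vee y\}$ given in the excerpt. Each of $[x,x\vee y]$ and $[x\vee y,y]$ is totally ordered, and using the diameter function (suitably truncated when an endpoint is of type IV, whose diameters decrease to a positive limit) each admits an explicit continuous order-preserving parametrization by a compact real interval; gluing them at $x\vee y$ produces a homeomorphism from a compact interval to $[x,y]$ sending the endpoints to $x$ and $y$.

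For uniqueness, the strategy is to show that any path $\gamma$ from $x$ to $y$ must pass through $x\vee y$, and then iterate inside each branch. The crucial topological statement is that $\mathbb{P}^{1}_{Ber}\setminus\{x\vee y\}$ is a disjoint union of open connected ``branches'' indexed by the tangent directions at $x\vee y$; one verifies this by covering each branch with open Berkovich disks $\mathcal{D}(a,r)^{-}$ and complements thereof, and applying the strong triangle inequality to control overlaps. Since $x$ and $y$ by construction lie in distinct branches, continuity of $\gamma$ forces its image to contain $x\vee y$; recursion within each branch, combined with the fact that arcs inside a totally ordered segment are unique up to reparametrization, pins $\gamma$ down to a reparametrization of $[x,y]$.

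Finally, for the statement involving $\zeta_{\rm Gauss}$, I would observe that $\mathcal{X}=\{w\in\mathbb{P}^{1}_{Ber}:w\preceq\zeta_{\rm Gauss}\}$ and $\mathcal{Y}=\mathbb{P}^{1}_{Ber}\setminus\mathcal{X}$, so $x\preceq\zeta_{\rm Gauss}$ while $y\not\preceq\zeta_{\rm Gauss}$. A short case analysis on the disk $D_{y}$ representing $y$---either $D_{y}\supsetneq D(0,1)$, so $x\vee y=y\succ\zeta_{\rm Gauss}$, or $D_{y}$ is disjoint from $D(0,1)$, in which case the smallest disk enclosing $D_{x}$ and $D_{y}$ has diameter strictly greater than $1$---shows $x\vee y\succ\zeta_{\rm Gauss}$, whence $\zeta_{\rm Gauss}\in[x,x\vee y]\subseteq[x,y]$. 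The principal obstacle will be the rigorous verification in the uniqueness step that $\mathbb{P}^{1}_{Ber}\setminus\{x\vee y\}$ decomposes into disjoint open connected branches, especially for type II points where the branches are indexed by $\mathbb{P}^{1}$ of the residue field and the correct location of type IV points in these branches requires a limit argument rather than a direct order comparison.
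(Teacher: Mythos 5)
This lemma is stated in the paper as a citation of Baker and Rumely \cite{BR}; the paper supplies no proof of its own, so there is nothing internal to compare your argument against. Judged on its own terms, your outline follows the standard route from the Berkovich literature: define $[x,y]$ through the join $x\vee y$ coming from the partial order, parametrize the two totally ordered halves by the diameter function, and prove uniqueness by a separation argument. The existence half and the final computation locating $\zeta_{\rm Gauss}$ on $[x,x\vee y]$ (via $x\preceq\zeta_{\rm Gauss}\preceq x\vee y$ whenever $x\in\mathcal{X}$, $y\in\mathcal{Y}$) are sound, though you should also dispose explicitly of the cases $y=\infty$ and $y$ of type IV, where ``the disk $D_y$'' has to be replaced by a cofinal family of disks eventually disjoint from $D(0,1)$.

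The step that is genuinely underspecified is uniqueness. Knowing that every path from $x$ to $y$ passes through $x\vee y$, and then ``recursing within each branch,'' does not terminate: the segment $[x,x\vee y]$ has uncountably many interior points and the recursion never exhausts them. The clean version of your own idea is to prove the separation statement for \emph{every} $w\in\,]x,y[$, not just for $x\vee y$: each such $w$ disconnects $x$ from $y$, so the image of any path from $x$ to $y$, being connected, must contain all of $[x,y]$. To conclude that an injective path contains nothing more, suppose its image contains some $z\notin[x,y]$ at parameter $t$; then the restrictions to $[0,t]$ and $[t,1]$ are arcs from $x$ to $z$ and from $z$ to $y$, hence contain $[x,z]$ and $[z,y]$ respectively, and these two segments overlap in $[w,z]$ where $w$ is the median of $x,y,z$ in the tree, with $w\neq z$ since $z\notin[x,y]$; this contradicts injectivity. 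With that replacement (and the branch decomposition at type II points that you already flag as the technical crux), your proposal becomes a complete proof in the spirit of \cite{BR}.
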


We say that a metric space $(X,d)$ is an {\bf $\mathbb{R}$-tree}, if for any two points $x,y\in X$, there is a unique arc from $x$ to $y$ and this arc is the geodesic segment.

\begin{lem}[\cite{BR}]Let $x,y\in \mathcal{D}(0,1)$. Then the metric $d(x,y)=2\mathrm{diam}(x\vee y)-\mathrm{diam}(x)-\mathrm{diam}(y)$ makes $\mathcal{D}(0,1)$ into an $\mathbb{R}$-tree.
\end{lem}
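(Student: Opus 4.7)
The plan is to verify that $d$ satisfies the metric axioms on $\mathcal{D}(0,1)$ and then identify the unique arcs from the preceding lemma as geodesic segments. I would split the argument into three stages.

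First, I would check the easy metric axioms: since $x\vee y\succeq x,y$ one has $\mathrm{diam}(x\vee y)\ge \max\{\mathrm{diam}(x),\mathrm{diam}(y)\}$, so $d\ge 0$, and symmetry is immediate from $x\vee y=y\vee x$. For $d(x,y)=0\Rightarrow x=y$, the equality forces $\mathrm{diam}(x)=\mathrm{diam}(y)=\mathrm{diam}(x\vee y)$, and since a nested pair of disks (or nested sequences representing Berkovich points) with the same diameter must coincide, we conclude $x=x\vee y=y$.

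Second, for the triangle inequality the key is a structural lemma on triple joins: for any three points $x,y,z\in\mathcal{D}(0,1)$, among $x\vee y$, $y\vee z$, $x\vee z$ two are equal to $x\vee y\vee z$ and the third is $\preceq$ this common value. The reason is that any two of these joins share one of $x,y,z$, and in the non-archimedean setting two disks with a common point are nested, so $\{x\vee y,\,y\vee z,\,x\vee z\}$ is a totally ordered subset of $(\mathbb{A}^{1}_{Ber},\preceq)$. Writing its elements as $P\preceq Q\preceq R$, the top element $R$ dominates all three of $x,y,z$, so $R=x\vee y\vee z$, while $Q\succeq P$ forces $Q$ to dominate the missing vertex as well, yielding $Q=R$. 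With this structural fact, the triangle inequality reduces to the identity
\begin{equation*}
d(x,y)+d(y,z)-d(x,z)=2\bigl(\mathrm{diam}(x\vee y)-\mathrm{diam}(y)\bigr)\ge 0
\end{equation*}
in the case $y\vee z=x\vee z$, with the symmetric cases handled analogously.

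Third, for the $\mathbb{R}$-tree property, I would invoke the previous lemma to obtain a unique arc $[x,y]$, which decomposes as $[x,x\vee y]\cup[x\vee y,y]$. Each half lies in a totally ordered chain $[\,\cdot\,,\infty[$ isomorphic to $[0,\infty[\,$, parameterized by diameter: for a point $w$ on the arc with $x\preceq w\preceq x\vee y$ one has $x\vee w=w$, so $d(x,w)=\mathrm{diam}(w)-\mathrm{diam}(x)$. Summing contributions from the two halves gives total length $2\mathrm{diam}(x\vee y)-\mathrm{diam}(x)-\mathrm{diam}(y)=d(x,y)$, so the unique arc is a geodesic segment realizing $d$. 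The main obstacle is the triple-join structural lemma: the pairwise-nestedness of disks sharing a classical point is standard, but extending this dichotomy uniformly to all four Berkovich types (in particular singular type IV points) requires translating ``sharing a common element under $\preceq$'' into the language of nested-sequence representatives. Once this is settled, everything else is routine algebraic bookkeeping.
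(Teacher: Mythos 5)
The paper does not prove this lemma at all: it is imported verbatim from Baker--Rumely \cite{BR}, so there is no in-paper argument to compare yours against. Judged on its own, your outline is a correct reconstruction of the standard proof, and your decomposition (metric axioms, triple-join lemma for the triangle inequality, geodesic parameterization of the unique arc by diameter) is the right one. Two points deserve tightening. First, the ``main obstacle'' you flag is less of an obstacle than you fear: the paper already records that $[y,\infty[$ is order-isomorphic to $[0,+\infty[$, so any two elements dominating a common point are comparable \emph{uniformly in type}, and your total-orderedness of $\{x\vee y,\,y\vee z,\,x\vee z\}$ follows with no separate treatment of type IV points. (Your omitted symmetric case is also fine: when $x\vee y=y\vee z=x\vee y\vee z$ one gets $d(x,y)+d(y,z)-d(x,z)=2\bigl(\mathrm{diam}(x\vee y)-\mathrm{diam}(x\vee z)\bigr)+2\bigl(\mathrm{diam}(x\vee y)-\mathrm{diam}(y)\bigr)\ge 0$.) Second, the one place where type IV genuinely needs a word is the implication $d(x,y)=0\Rightarrow x=y$: ``nested disks of equal diameter coincide'' is literally true only for types II and III. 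For $x$ of type IV represented by $\{D(a_i,r_i)\}$ with empty intersection, the radii satisfy $r_i>\lim_j r_j=\mathrm{diam}(x)$ strictly (otherwise the sequence would stabilize at a nonempty disk), so every $w\succ x$ has $\mathrm{diam}(w)\ge r_i>\mathrm{diam}(x)$ for some $i$; hence no point strictly above $x$ shares its diameter and the conclusion still holds. With those two remarks made explicit, your argument is complete and matches the proof in \cite{BR}.
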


Therefore, we can define a metric on $\mathbb{P}^{1}_{Ber}$ as $d_{p}(x,y)=d(x,y)$ if $x,y\in \mathcal{X}$ which can be identified with $\mathcal{D}(0,1)$, and $d_{p}(x,y)=d(x,y)$ if $x,y\in \mathcal{Y}$ which can be identified with $\mathcal{D}(0,1)^{-}$, and $d_{p}(x,y)=d(x,\zeta_{Gauss})+d(\zeta_{Gauss},y)$ if $x\in \mathcal{X}$ and $y\in \mathcal{Y}$.

The {\bf Berkovich hyperbolic space} $\mathbb{H}_{Ber}$ is defined by \begin{displaymath}\mathbb{H}_{Ber}=\mathbb{P}^{1}_{Ber}\setminus\mathbb{P}^{1}(\mathbb{C}). \end{displaymath} Since $\infty\in \mathbb{P}^{1}_{Ber}$ is of type I,  $\mathbb{H}_{Ber}$ can be also viewed as $\mathbb{A}_{Ber}^{1}\setminus \mathbb{C}$. Thus $\mathbb{H}_{Ber}$ has a tree structure induced by $\mathbb{A}_{Ber}^{1}$. Over $\mathbb{H}_{Ber}$ we can define the {\bf hyperbolic distance}, \begin{displaymath}
    \rho(x,y)=2\log \mathrm{diam}(x\vee y)-\log\mathrm{diam}(x)-\log\mathrm{diam}(y),\quad x,y\in\mathbb H_{Ber}.
\end{displaymath}

\begin{lem}[\cite{BR}] $\mathbb{H}_{Ber}$ is a complete metric space under $\rho(x,y)$.
\end{lem}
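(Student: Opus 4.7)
The plan is to convert a $\rho$-Cauchy sequence in $\mathbb{H}_{Ber}$ into a Cauchy sequence for the linear tree metric $d$ of the preceding lemma, use the $\mathbb{R}$-tree structure to produce the candidate limit, and then translate back to $\rho$.

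Let $\{x_n\}$ be Cauchy in $(\mathbb{H}_{Ber},\rho)$. First I would obtain uniform diameter control. Since $x\vee y\succeq x,y$, we have $\mathrm{diam}(x\vee y)\ge\max\{\mathrm{diam}(x),\mathrm{diam}(y)\}$, and the definition of $\rho$ immediately yields
$$\rho(x,y)\ge|\log\mathrm{diam}(x)-\log\mathrm{diam}(y)|.$$
Hence $\log\mathrm{diam}(x_n)$ is Cauchy in $\mathbb{R}$ and converges to some finite real number, so $\mathrm{diam}(x_n)\to r$ for some $r>0$; in particular the diameters lie in a compact interval $[c,C]\subset(0,\infty)$. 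Applying the same bound to the uniformly bounded distance $\rho(x_n,x_1)$ also shows that $\mathrm{diam}(x_n\vee x_1)$ is bounded, so the entire sequence lives in a bounded subtree of $\mathbb{P}^1_{Ber}$.

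Next I would rewrite the defining formula for $\rho$ to express $\mathrm{diam}(x_n\vee x_m)$ in terms of $\mathrm{diam}(x_n)$, $\mathrm{diam}(x_m)$, and $\rho(x_n,x_m)$. Since the latter three converge to $r$, $r$, and $0$ respectively, we deduce $\mathrm{diam}(x_n\vee x_m)\to r$ as well. Substituting into
$$d(x_n,x_m)=2\mathrm{diam}(x_n\vee x_m)-\mathrm{diam}(x_n)-\mathrm{diam}(x_m)$$
yields $d(x_n,x_m)\to 0$, so $\{x_n\}$ is also Cauchy in the $\mathbb{R}$-tree metric $d$ of the preceding lemma. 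Using the $\mathbb{R}$-tree structure together with the nested-disk description of Berkovich points, I would produce a limit $x^{*}\in \mathbb{A}^1_{Ber}$: concretely, one extracts a subsequence whose representing disks form a nested chain, and its intersection is either a disk of positive radius (a type II or III point) or empty (a type IV class), in either case defining the candidate Berkovich point $x^{*}$.

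Finally I would check $x^{*}\in\mathbb{H}_{Ber}$ and upgrade $d$-convergence to $\rho$-convergence. Continuity of $\mathrm{diam}$ for $d$ forces $\mathrm{diam}(x^{*})=r>0$, so $x^{*}$ is not of type I and lies in $\mathbb{H}_{Ber}$. On the diameter range $[c,C]$ the logarithm is bi-Lipschitz, which via the definitions of $\rho$ and $d$ makes the two metrics bi-Lipschitz equivalent on the bounded subtree containing $\{x_n\}$; hence $d(x_n,x^{*})\to 0$ upgrades to $\rho(x_n,x^{*})\to 0$. The main technical obstacle is the completeness of the ambient $\mathbb{R}$-tree itself, since the preceding lemma records only its $\mathbb{R}$-tree structure and not its completeness; I would handle this either by citing the corresponding theorem in Baker--Rumely \cite{BR}, or, more elementarily, by executing the nested-disk construction above, taking care of the type IV case where the nested intersection is empty but the equivalence class of nested disks still determines a point of $\mathbb{H}_{Ber}$ of positive diameter.
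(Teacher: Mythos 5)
A preliminary remark: the paper does not prove this lemma at all --- it is quoted from Baker--Rumely \cite{BR} --- so there is no in-paper argument to compare against, and your proposal has to stand on its own. Its first half does: the inequality $\rho(x,y)\ge|\log\mathrm{diam}(x)-\log\mathrm{diam}(y)|$ follows correctly from $\mathrm{diam}(x\vee y)\ge\max\{\mathrm{diam}(x),\mathrm{diam}(y)\}$, it pins the diameters of a $\rho$-Cauchy sequence into an interval $[c,C]\subset(0,\infty)$, the deduction that $\mathrm{diam}(x_n\vee x_m)\to r$ and hence that the sequence is $d$-Cauchy is sound, and on that diameter range the bi-Lipschitz comparison between $d$ and $\rho$ works as you claim.

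The gap is in the construction of the limit point. You propose to ``extract a subsequence whose representing disks form a nested chain,'' but a $\rho$-Cauchy sequence in $\mathbb{H}_{Ber}$ need not contain even two comparable terms. Take $x_n$ corresponding to the disks $D(a_n,p^{-1/n})$, where the $a_n\in\mathcal{O}_{p}$ lie in pairwise distinct residue classes (the residue field of $\mathbb{C}_p$ is infinite). Then $x_n\vee x_m=\zeta_{Gauss}$ for all $n\neq m$, so $\rho(x_n,x_m)=\tfrac1n+\tfrac1m\to0$ and the sequence converges to $\zeta_{Gauss}$, yet the disks are pairwise disjoint and no nested subsequence exists. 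So the nested-intersection recipe cannot be applied to the $x_n$ themselves, and this is precisely the step where completeness actually lives; everything before it is a metric comparison. The standard repair is to run the nested-disk argument on a derived monotone family rather than on a subsequence: for instance, follow the geodesics $[\zeta_{Gauss},x_n]$, observe that for large $m,n$ they agree up to the branch point $x_n\vee x_m$ (whose diameter you have already shown converges to $r$), and realize the limit as the endpoint of a monotone path of disks; only when that family is genuinely decreasing does the type IV / empty-intersection case arise, and there the positive lower bound $c$ on the diameters keeps the limit inside $\mathbb{H}_{Ber}$. Your stated fallback of citing the completeness theorem in \cite{BR} is of course legitimate, but it is exactly what the paper already does, and then nothing is being proved.
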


\begin{lem}[\cite{BR}] Suppose that $w, y, x\in\mathbb{H}_{Ber}$. Then $\rho(x,y)=\rho(x,w)+\rho(w,y)$ if and only if $w$ belongs to $[x,y]$.
\end{lem}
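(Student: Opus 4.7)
The plan is to reduce the additivity condition $\rho(x,y)=\rho(x,w)+\rho(w,y)$ to an algebraic identity on diameters, then analyze each direction using the partial-order tree structure on $\mathbb{P}^1_{Ber}$. Expanding the defining formula $\rho(x,y)=2\log \mathrm{diam}(x\vee y)-\log \mathrm{diam}(x)-\log \mathrm{diam}(y)$ on both sides, the terms $\log \mathrm{diam}(x)$ and $\log \mathrm{diam}(y)$ cancel and the desired equality becomes equivalent to
$$\mathrm{diam}(x\vee w)\cdot\mathrm{diam}(w\vee y)=\mathrm{diam}(x\vee y)\cdot\mathrm{diam}(w).$$
Thus it suffices to show that this identity holds exactly when $w\in[x,y]$.

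For the $(\Leftarrow)$ direction I would use the decomposition $[x,y]=[x,x\vee y]\cup[x\vee y,y]$ built into the definition of $[x,y]$. By symmetry it suffices to treat $w\in[x,x\vee y]$, so $x\preceq w\preceq x\vee y$. Then $x\vee w=w$ directly. Moreover, $w\vee y$ is the least upper bound of $\{w,y\}$; the relations $w\vee y\succeq w\succeq x$ and $w\vee y\succeq y$ force $w\vee y\succeq x\vee y$, and conversely $w\vee y\preceq x\vee y$ because $x\vee y$ is an upper bound of $\{w,y\}$. Hence $w\vee y=x\vee y$, and the algebraic identity reduces to the tautology $\mathrm{diam}(w)\cdot\mathrm{diam}(x\vee y)=\mathrm{diam}(x\vee y)\cdot\mathrm{diam}(w)$.

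For the $(\Rightarrow)$ direction I would argue by contrapositive, using the $\mathbb{R}$-tree structure of $\mathbb{P}^1_{Ber}$ furnished by the preceding lemmas. If $w\notin[x,y]$, the tree property yields a median point $m\in\mathbb{H}_{Ber}$ lying on all three arcs $[x,y]$, $[x,w]$, $[y,w]$; since $m\in[x,y]$ but $w\notin[x,y]$, we have $m\neq w$ and hence $\rho(m,w)>0$. Applying the already-established $(\Leftarrow)$ direction to $m$ on each of the three arcs gives
\begin{align*}
\rho(x,w)&=\rho(x,m)+\rho(m,w),\\
\rho(y,w)&=\rho(y,m)+\rho(m,w),\\
\rho(x,y)&=\rho(x,m)+\rho(m,y).
\end{align*}
Summing the first two and substituting the third yields $\rho(x,w)+\rho(w,y)=\rho(x,y)+2\rho(m,w)>\rho(x,y)$, contradicting additivity.

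The main technical obstacle is the order-theoretic step $w\vee y=x\vee y$ in the sufficiency direction, where the tree-like behaviour of the join is essential; once this is in hand, everything else is forced. A secondary point of care is justifying the existence of the median $m$ in the necessity direction, which can be extracted either from the $\mathbb{R}$-tree property together with unique path-connectivity, or by a short case analysis on the three pairwise joins $x\vee y$, $x\vee w$, $y\vee w$ (in the ultrametric setting, two of these three disks must coincide, and this coincident disk is precisely $m$).
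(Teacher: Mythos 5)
The paper does not prove this lemma at all: it is quoted verbatim from Baker--Rumely \cite{BR} as a background fact, so there is no in-paper argument to compare yours against. Judged on its own, your proof is essentially correct. The reduction of additivity to the diameter identity $\mathrm{diam}(x\vee w)\,\mathrm{diam}(w\vee y)=\mathrm{diam}(x\vee y)\,\mathrm{diam}(w)$ is right (the logs are legitimate because every point of $\mathbb{H}_{Ber}$ has positive diameter), the order-theoretic step $w\vee y=x\vee y$ for $x\preceq w\preceq x\vee y$ is the correct crux of the sufficiency direction, and the contrapositive via the median point $m$ with $\rho(x,w)+\rho(w,y)=\rho(x,y)+2\rho(m,w)$ is a clean way to get necessity, since $\rho$ is a genuine metric on $\mathbb{H}_{Ber}$ and $m\in[x,y]$, $w\notin[x,y]$ forces $m\neq w$.

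One concrete correction: your parenthetical description of the median is wrong. Among the three joins $x\vee y$, $x\vee w$, $y\vee w$, two indeed coincide, but the coincident pair is the \emph{largest} of the three in the partial order; the median $m$ is the \emph{smallest} of the three joins, not the coincident one. For instance, with $x=D(0,p^{-2})$, $y=D(1,p^{-2})$, $w=D(p,p^{-3})$ one has $x\vee y=y\vee w=D(0,1)$ while the median is $x\vee w=D(0,p^{-1})$. This does not damage the proof, because your primary justification for the existence of $m$ (the $\mathbb{R}$-tree property and unique path-connectedness) is sound; just replace ``this coincident disk is precisely $m$'' by ``the minimal join is precisely $m$'' if you want to keep the explicit disk-theoretic construction.
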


\subsection{The action of a rational map $\phi$ over $\mathbb{P}_{Ber}^{1}$}
Let $\phi\in \mathbb{C}_{p}(T)$ be a nonconstant rational function of degree $d\ge 1$. Since type I points are dense in $\mathbb{P}_{Ber}^{1}$, for any $x\in \mathbb{H}_{Ber}$, there exists a sequence $x_{n}$ tending to $x$ with respect to the Berkovich topology. We can define $\phi(z)=\displaystyle\mathop{\lim}_{n\longrightarrow \infty}\phi(x_{n})$.


If $d=1$, $\phi$ has an algebraic inverse and thus induces an automorphism of $\mathbb{P}_{Ber}^{1}$. Define $\mathrm{Aut}(\mathbb{P}_{Ber}^{1})$ to be the group of automorphisms of $\mathbb{P}_{Ber}^{1}$.  The following lemmas can be found in \cite{BR}.





\begin{lem}[\cite{BR}]\label{yl:type} If $\phi(z)\in \mathbb{C}_{p}(z)$ is nonconstant, then $\phi: \mathbb{P}^{1}_{Ber}\longrightarrow \mathbb{P}^{1}_{Ber}$ takes points of each type $(I,II,III,IV)$ to points of the same type. Thus $\phi(z)$ has a given type if and only if $z$ does.

\end{lem}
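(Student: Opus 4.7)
The plan is to analyze $\phi$'s action on Berkovich points by examining its action on disks in $\mathbb{C}_{p}$, since each nonsingular point is identified with a disk and each singular (type IV) point with a nested sequence of disks having empty intersection. Type I is immediate because the restriction of $\phi$ to $\mathbb{P}^{1}(\mathbb{C}_{p})$ is ordinary evaluation, sending classical points to classical points. The remaining three cases rest on a structural lemma describing how $\phi$ transforms disks.

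The key lemma asserts that, after pre- and post-composing with Möbius transformations --- which are homeomorphisms of $\mathbb{P}^{1}_{Ber}$ preserving all four types and hence reduce the general case harmlessly --- we may assume $\phi$ is holomorphic on the disk $D(a,r)$ in question and that $\phi(a)\neq\infty$. Then $\phi(D(a,r)) = D(\phi(a), s)$ is again a closed disk whose radius can be read off from the local expansion $\phi(z+a) - \phi(a) = \sum_{i\geq 1} c_{i} z^{i}$ as $s = \max_{i\geq 1}|c_{i}|r^{i}$. This identity combines the non-archimedean maximum principle with a Newton-polygon analysis of $\phi$ on the disk. The crucial observation is that the operation $r\mapsto \max_{i} |c_{i}|r^{i}$ preserves membership in the value group $|\mathbb{C}_{p}^{\ast}| = p^{\mathbb{Q}}$: if $r = p^{q}$ with $q$ rational, then each $|c_{i}|r^{i}$ lies in $|\mathbb{C}_{p}^{\ast}|$ and so does their (finite) maximum; conversely, if $r = p^{\alpha}$ with $\alpha$ irrational, then for every $i\geq 1$ with $c_{i}\neq 0$ we have $|c_{i}|r^{i} = p^{q_{i}+i\alpha}\notin p^{\mathbb{Q}}$, while distinct such terms cannot coincide (equality would force $r\in|\mathbb{C}_{p}^{\ast}|$), so the maximum $s$ is likewise outside the value group. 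Hence type II maps to type II and type III to type III.

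For type IV, let $x$ correspond to a nested sequence $\{D(a_{i}, r_{i})\}$ with $\bigcap_{i} D(a_{i}, r_{i}) = \emptyset$. After discarding finitely many terms (and applying the Möbius normalizations) the images $D(b_{i}, s_{i}) = \phi(D(a_{i}, r_{i}))$ form a nested sequence; we must show that their intersection is empty. If some $w\in \mathbb{C}_{p}$ lay in $\bigcap_{i} D(b_{i}, s_{i})$, then each $D(a_{i}, r_{i})$ would intersect the finite preimage set $\phi^{-1}(w)$, so by the pigeonhole principle some fixed $z_{0}\in \phi^{-1}(w)$ would lie in infinitely many --- hence, by nestedness, all --- of the $D(a_{i}, r_{i})$, contradicting the empty intersection. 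Therefore $\phi(x)$ is again of type IV. The main obstacle is establishing the key structural lemma on disk images with its sharp radius formula, which is exactly where the nontriviality of the statement lives; once it is in hand, the preservation of each of the four types follows from elementary manipulations with the value group $p^{\mathbb{Q}}$ and the finiteness of $\phi^{-1}(w)$.
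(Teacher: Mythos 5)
The paper itself gives no proof of this lemma---it is quoted verbatim from \cite{BR}---so your argument has to stand on its own. Much of it does: type I is immediate; the type IV argument (finiteness of the fibre $\phi^{-1}(w)$ plus pigeonhole over the nested disks) is correct, and there the pole-free reduction you invoke is in fact automatic, since a pole lying in infinitely many of the nested disks $D(a_{i},r_{i})$ would lie in all of them and hence in their empty intersection, so after truncating the sequence no $D(a_{i},r_{i})$ contains a pole; and the value-group computation distinguishing types II and III from the radius formula $s=\max_{i\ge 1}|c_{i}|r^{i}$ is correct.

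The genuine gap is the reduction on which the type II/III cases rest: the claim that pre- and post-composition with M\"obius maps lets you assume $\phi$ has no poles on a closed disk representing the point. This fails in general. Take $\phi(z)=z+1/z$ and the Gauss point $\zeta_{0,1}$. The poles $0$ and $\infty$ lie in two distinct components of $\mathbb{P}^{1}_{Ber}\setminus\{\zeta_{0,1}\}$, and any closed disk (in any coordinate) representing $\zeta_{0,1}$ omits exactly one such component, so it must contain a pole; post-composition cannot help either, because $\phi(D(0,1))=\mathbb{P}^{1}(\mathbb{C}_{p})$ (for $|w|\le 1$ both roots of $z^{2}-wz+1=0$ have $|z|=1$, and for $|w|>1$ one root has $|z|=|w|^{-1}<1$), so no M\"obius map $M$ makes $M\circ\phi$ pole-free on $D(0,1)$. (The lemma is of course true in this example: $\phi$ has good reduction and fixes $\zeta_{0,1}$.) There are two standard repairs. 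One is to prove the structural lemma for affinoids $D(a,r)\setminus\bigcup_{i}D(a_{i},r)^{-}$ rather than disks, excising small open disks around the poles; the image is again such an affinoid with the outer radius governed by the same kind of formula---this is precisely the shape of Lemma \ref{yl:image}, which the present paper also quotes from \cite{BR}, and your value-group argument applied to the outer radius then closes the II/III cases. The other is to bypass disks entirely: characterize the four types by the value group and residue field of the completed residue field $\mathcal{H}(x)$ and use that $\mathcal{H}(\phi(x))\hookrightarrow\mathcal{H}(x)$ is a finite extension, so neither invariant can change class. (A minor further caveat: your appeal to M\"obius maps preserving all four types is a special case of the statement being proved, so it should be verified directly for translations, scalings and $z\mapsto 1/z$.)
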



\begin{lem}[\cite{BR}]\label{yl:image} Let $f(z)\in \mathbb{C}_{p}(z)$ be a nonconstant rational function, and suppose that $x\in \mathbb{A}_{Ber}^{1}$ is a point of type II, corresponding to a disc $D(a,r)$ in $\mathbb{C}_{p}$ under Berkovich's classification. Then $f(x)$ corresponds to the disc $D(b,R)$ if and only if there exist $a_{1},a_{2},\ldots , a_{m}, b_{1}, b_{2},\ldots, b_{n}\in \mathbb{C}_{p}$ for which $D(b,R)\setminus\cup_{i=1}^{n}D(b_{i},R)^{-}$ is the image under $f(z)$ of $D(a,r)\setminus \cup_{i=1}^{m}D(a_{i},r)^{-}$.

\end{lem}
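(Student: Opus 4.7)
The plan is to exploit the interpretation of a type II point as the sup-norm seminorm on its corresponding disc, combined with a normalization to the standard Gauss point. Since $\mathrm{Aut}(\mathbb{P}_{Ber}^{1})$ acts transitively on type II points via M\"obius transformations (which simply transport the underlying discs in $\mathbb{C}_p$), pre- and post-composing $f$ with suitable M\"obius maps reduces the problem to the case $a=b=0$ and $r=R=1$, so that $x=f(x)=\zeta_{Gauss}$ both correspond to the unit disc $D(0,1)$. One may then write $f=P/Q$ with coprime $P,Q\in \mathcal{O}_p[z]$, normalized so that the reduction $\bar f=\bar P/\bar Q$ is a well-defined nonconstant rational function over the residue field $\tilde{\mathbb{C}}_p$ of $\mathbb{C}_p$.

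For the ``if'' direction, recall that for $h\in\mathbb{C}_p[z]$ the Gauss-point seminorm equals $[h]_{\zeta_{Gauss}}=\sup_{z\in D(0,1)}|h(z)|$, and removing finitely many proper maximal open sub-discs $D(a_i,1)^-$ does not alter this supremum, since the remaining residue classes of $D(0,1)$ still attain it. Assuming $f$ carries $D(0,1)\setminus \bigcup D(a_i,1)^-$ onto $D(0,1)\setminus \bigcup D(b_j,1)^-$, one computes
\[
[h]_{f(x)}=[h\circ f]_x=\sup_{z\in D(0,1)\setminus\bigcup D(a_i,1)^-}|h(f(z))|=\sup_{w\in D(0,1)\setminus\bigcup D(b_j,1)^-}|h(w)|=[h]_y,
\]
where $y$ is the Gauss point of the target $D(0,1)$, giving $f(x)=y$ as desired.

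For the ``only if'' direction, take $\bar a_1,\ldots,\bar a_m\in\tilde{\mathbb{C}}_p$ to be the poles of $\bar f$ and $\bar b_1,\ldots,\bar b_n\in\tilde{\mathbb{C}}_p$ to consist of $\bar f(\bar\infty)$ together with every residue class in $\tilde{\mathbb{C}}_p$ that is omitted by $\bar f$ on $\tilde{\mathbb{C}}_p\setminus\{\bar a_i\}$ (there are finitely many, bounded by $\deg\bar f+1$). Lift these residues to $a_i,b_j\in\mathcal{O}_p$. A direct residue calculation yields $|Q(z)|=1$ and $|P(z)|\le 1$ on $D(0,1)\setminus\bigcup D(a_i,1)^-$, so that $f$ sends this affinoid into $D(0,1)\setminus\bigcup D(b_j,1)^-$. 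Surjectivity onto the target affinoid is obtained by applying Newton-polygon analysis (equivalently, Weierstrass preparation) to $P(z)-wQ(z)$ for each $w$ in the target, producing a root in the source affinoid. Un-normalizing produces the required $a_i,b_j$ in the general case. The principal obstacle is precisely this surjectivity step: one must verify that the chosen $\bar b_j$ capture \emph{exactly} those residue classes that $\bar f$ misses, which is what permits every residue of the target affinoid to be attained, and relies on the non-archimedean open mapping principle for analytic functions on a disc.
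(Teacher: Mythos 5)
The paper does not prove this lemma at all --- it is quoted as background directly from Baker--Rumely \cite{BR} --- so your argument has to stand on its own. Your ``if'' direction is essentially fine in outline: after normalizing source and target to $\zeta_{Gauss}$, the identification of $[\,\cdot\,]_{\zeta_{Gauss}}$ with the sup-norm on $D(0,1)$, together with the fact that deleting finitely many open residue discs does not lower the supremum of a polynomial (or of a rational function with no poles there), gives $[h]_{f(x)}=[h]_{\zeta_{Gauss}}$ for all $h$ and hence $f(x)=\zeta_{Gauss}$.

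The ``only if'' direction, however, contains a concrete error: you cannot unconditionally put $\overline{f}(\overline{\infty})$ among the excluded target residues. Take $p\ge 3$ and $f(z)=z/(z^{2}-1)$. The reduction has poles at $\pm\overline{1}$, so your source affinoid is $D(0,1)\setminus\bigl(D(1,1)^{-}\cup D(-1,1)^{-}\bigr)$, which contains $0$; the reduction omits no finite value on the non-pole residue classes, but $\overline{f}(\overline{\infty})=\overline{0}$, so your recipe deletes $D(0,1)^{-}$ from the target. Yet $f(0)=0\in D(0,1)^{-}$, so the asserted containment ``$f$ sends this affinoid into $D(0,1)\setminus\bigcup_{j}D(b_{j},1)^{-}$'' is false; the correct image here is all of $D(0,1)$ (i.e.\ $n=0$). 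The class of $\overline{f}(\overline{\infty})$ must be excluded only when $\overline{\infty}$ is its \emph{sole} preimage, in which case it is already one of your omitted values, so the extra term is either redundant or wrong. Two further steps are asserted rather than proved. First, the claim that $P,Q$ can be ``normalized so that $\overline{f}=\overline{P}/\overline{Q}$ is nonconstant'' is equivalent to the nontrivial fact that $f(\zeta_{Gauss})=\zeta_{Gauss}$ forces nonconstant reduction; that implication is the real content of this direction and needs an argument. Second, even with $P,Q$ coprime over $\mathbb{C}_{p}$ and normalized in $\mathcal{O}_{p}[z]$, the reductions $\overline{P},\overline{Q}$ may acquire a common factor (e.g.\ $f(z)=z(z^{2}-p)/(z^{2}-p^{2})$, where $\overline{P}=z^{3}$, $\overline{Q}=z^{2}$): the residue classes in the zero set of that common factor harbour poles of $f$ invisible to the cancelled reduction and must also be removed from the source, your identity ``$|Q(z)|=1$ off the poles of $\overline{f}$'' fails there, and this phenomenon is precisely why the lemma allows several excluded discs on \emph{each} side rather than the at most one omitted value your coprime picture would permit.
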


\subsection{Reduction on rational function over $\mathbb{C}_{p}$}

Let $\mathcal{O}_{p}=\{z\in \mathbb{C}_{p}||z|\le 1\}$, $\mathcal{O}^{\ast}=\{z\in \mathbb{C}_{p}||z|=1\}$, $\mathcal{M}=\{z||z|<1\}$ and $k=\mathcal{O}_{p}/\mathcal{M}$. We also call $k$ the residue field of $\mathbb{C}_{p}$. If $x\in \mathcal{O}_{p}$, we denote the reduction of $x$ modulo $\mathcal{M}$ by  $\bar{x}$. For any $z\in \mathbb{C}_{p}$, there exists a homogeneous coordinate $[x,y]$ for $z$, where $x,y\in \mathcal{O}$ with at least one in $\mathcal{O}^{\ast}$. Reduction induces a well-defined map $\mathbb{P}^{1}(\mathbb{C}_{p})\rightarrow \mathbb{P}^{1}(k)$ by $\overline{[x,y]}=[\bar{x},\bar{y}]$. Any rational function $f(z)\in  \mathbb{C}_{p}(z)$ can be written in homogeneous coordinates as
$f([x, y])=[g(x, y), h(x, y)]$
where $g, h\in \mathcal{O}_{p}[x, y]$ are relatively prime homogeneous polynomials of
degree $d=\deg(f)$. We can ensure that at least one coefficient of either $g$ or
$h$ has valuation zero (i.e., absolute value 1). The reduction map induces a
map $\mathcal{O}_{p}[x, y]\rightarrow k[x, y]$.
\begin{defi}\label{df:reduction}Let $f(z)\in \mathbb{C}_{p}(z)$ be a map with homogenous presentation
$f([x, y])=[g(x, y), h(x, y)]$,
where $g, h \in  \mathcal{O}_{p}[x, y]$ are relatively prime homogeneous polynomials of
degree $d=\deg(f)$, and at least one coefficient of $g$ or $h$ has absolute value $1$.
We say that $f$ has good reduction if $\bar{g}$ and $\bar{h}$ have no common zeros in $k\times k$
besides $(x,y)=(0,0)$.
\end{defi}
If there is some linear fractional transformation $h\in \mathrm{PSL}(2,\mathbb{C}_{p})$ such that $h^{-1}\circ f\circ h$ has good reduction, we say that $f$ has {\bf potentially good reduction}.

\begin{lem}[\cite{SJ}]\label{yl:gdrdpM} Let $f\in \mathrm{PSL}(2, \mathbb{C}_{p})$ be a rational function of degree one. Then $f$ has good reduction if and only if $f\in \mathrm{PSL}(2, \mathcal{O})$.
\end{lem}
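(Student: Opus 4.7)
The plan is to reduce the question to a single linear-algebra computation over the residue field $k$. Given $f\in\mathrm{PSL}(2,\mathbb{C}_{p})$, I will first pick a matrix representative $A=\begin{pmatrix}a & b\\ c & d\end{pmatrix}$ normalized so that $\mu:=\max\{|a|,|b|,|c|,|d|\}=1$; this is always possible because the valuation group $|\mathbb{C}_{p}^{*}|$ is divisible, so any representative may be rescaled by a scalar in $\mathbb{C}_{p}^{*}$ of the appropriate absolute value. With this normalization the entries lie in $\mathcal{O}_{p}$ with at least one a unit, so the homogeneous presentation $f([x,y])=[ax+by,\,cx+dy]$ meets the coefficient condition in Definition~\ref{df:reduction}.

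Next I would analyze the common zero set in $k\times k$ of the reduced linear forms $\bar g=\bar a x+\bar b y$ and $\bar h=\bar c x+\bar d y$. Since these are homogeneous of degree one, a common nontrivial zero exists if and only if they are $k$-proportional, which is equivalent to $\bar a\bar d-\bar b\bar c=0$ in $k$, i.e.\ $|ad-bc|<1$. Consequently, $f$ has good reduction precisely when the normalized representative satisfies $|\det A|=1$, i.e.\ $\det A\in\mathcal{O}_{p}^{*}$.

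To finish, I will match this condition with membership in $\mathrm{PSL}(2,\mathcal{O}_{p})$. If good reduction holds, $\det A\in\mathcal{O}_{p}^{*}$, so there is a square root $\lambda\in\mathcal{O}_{p}^{*}$ of $(\det A)^{-1}$ (available because $\mathbb{C}_{p}$ is algebraically closed and the absolute value of a square root of a unit is again $1$); then $\lambda A\in\mathrm{SL}(2,\mathcal{O}_{p})$ represents $f$, so $f\in\mathrm{PSL}(2,\mathcal{O}_{p})$. Conversely, given $f\in\mathrm{PSL}(2,\mathcal{O}_{p})$, a representative $B\in\mathrm{SL}(2,\mathcal{O}_{p})$ has entries with absolute value $\le 1$ and determinant $1$; the strong triangle inequality forces $\max_{i,j}|b_{ij}|=1$, so $B$ is already in normalized form and $\overline{\det B}=\bar 1\neq 0$, giving good reduction.

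The only real obstacle is administrative rather than substantive: one must juggle the two different normalizations that appear on the two sides of the equivalence --- determinant $1$ (the convention built into $\mathrm{PSL}(2,\mathcal{O}_{p})$) versus maximum entry of absolute value $1$ (the convention in Definition~\ref{df:reduction}) --- and invoke divisibility of $|\mathbb{C}_{p}^{*}|$ and existence of square roots in $\mathcal{O}_{p}^{*}$ at the right moments. Once this bridge is set up, both directions collapse to the single observation $|\det A|=1 \Leftrightarrow \overline{\det A}\neq 0$.
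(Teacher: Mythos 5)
Your proof is correct. The paper itself gives no argument for this lemma --- it is quoted from Silverman \cite{SJ} --- and your computation is exactly the standard one: for a degree-one map the resultant of the normalized homogeneous presentation is $ad-bc$, so Silverman's criterion ``good reduction $\Leftrightarrow$ $|\mathrm{Res}(\phi)|=1$'' collapses to your condition $|\det A|=1$ for the max-normalized representative. The two bookkeeping points you flag (divisibility of $|\mathbb{C}_{p}^{*}|$ to pass to the max-normalized form, and existence of a unit square root to pass back to a determinant-one representative in $\mathrm{SL}(2,\mathcal{O}_{p})$) are handled correctly, and the converse direction's use of the strong triangle inequality to force $\max_{i,j}|b_{ij}|=1$ from $ad-bc=1$ is the same observation the paper itself uses in the proof of Theorem \ref{thm:1}, $(5)\Rightarrow(1)$.
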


\begin{lem}[\cite{BR}]\label{yl:gdguass} Let $f\in \mathbb{C}_{p}(z)$ be a non-constant rational function. Then $f$ has good reduction if and only if $f^{-1}(\zeta_{Gauss})=\zeta_{Gauss}$.

\end{lem}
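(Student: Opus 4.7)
The plan is to identify $\zeta_{\rm Gauss}$ with the closed unit disk $D(0,1)$ via Berkovich's classification, and translate the condition $f^{-1}(\zeta_{\rm Gauss})=\zeta_{\rm Gauss}$ into a disk-theoretic statement via Lemma \ref{yl:image}, which I then match against the algebraic definition of good reduction (Definition \ref{df:reduction}) using the residue field $k$.

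For the forward direction, assume $f$ has good reduction via a homogeneous representation $f([x,y])=[g(x,y),h(x,y)]$ with $g,h\in\mathcal{O}_{p}[x,y]$ coprime of degree $d$, at least one coefficient of absolute value $1$, and $\bar{g},\bar{h}$ sharing no nontrivial common root in $k\times k$. The induced map $\bar{f}=[\bar{g}:\bar{h}]\colon\mathbb{P}^{1}(k)\to\mathbb{P}^{1}(k)$ still has degree $d$. I would first establish that $f$ permutes residue classes: for any $\alpha\in\mathbb{P}^{1}(k)$, $f$ sends the residue disk $\pi^{-1}(\alpha)$ (where $\pi$ is reduction) exactly onto $\pi^{-1}(\bar{f}(\alpha))$. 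This follows by expanding $g$ and $h$ in Taylor series around a lift of $\alpha$ and applying the strong triangle inequality together with the no-common-zero hypothesis to guarantee that exactly one of $|g|,|h|$ equals $1$ on that disk (so the reduction of the quotient is well-defined). Writing $S=\bar{f}^{-1}(\infty)$ and taking lifts $a_{1},\ldots,a_{m}$ of the finite points of $S$, and similarly $b_{1},\ldots,b_{n}$ lifting the finite points of $\bar{f}^{-1}(\bar{f}(\infty))$, the permutation of residue disks yields
$$f\Bigl(D(0,1)\setminus\bigcup_{i=1}^{m}D(a_{i},1)^{-}\Bigr)=D(0,1)\setminus\bigcup_{j=1}^{n}D(b_{j},1)^{-}.$$
Lemma \ref{yl:image} then gives $f(\zeta_{\rm Gauss})=\zeta_{\rm Gauss}$; since $\deg f=d$ and each residue class has a unique $f$-preimage class (again by the no-common-zero hypothesis counted with multiplicity), the set-theoretic equation $f^{-1}(\zeta_{\rm Gauss})=\zeta_{\rm Gauss}$ follows from Lemma \ref{yl:type}.

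For the converse, suppose $f^{-1}(\zeta_{\rm Gauss})=\zeta_{\rm Gauss}$, and fix any homogeneous representation $f=[g:h]$ with $g,h\in\mathcal{O}_{p}[x,y]$ coprime with a coefficient of absolute value $1$. Applying Lemma \ref{yl:image} to $f(\zeta_{\rm Gauss})=\zeta_{\rm Gauss}$ produces finite sets $\{a_{i}\},\{b_{j}\}$ witnessing that $f$ maps the complement of finitely many open unit sub-disks in $D(0,1)$ onto the corresponding complement. The argument is to show that the existence of a common nontrivial zero $(\bar{u},\bar{v})\in k^{2}$ of $\bar{g},\bar{h}$ is incompatible with this finiteness. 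Lifting $(\bar{u},\bar{v})$ to $(u,v)\in\mathcal{O}_{p}^{2}$ with $\max(|u|,|v|)=1$ gives a residue class on $\mathbb{P}^{1}(\mathcal{O}_{p})$ on which both $|g|,|h|$ can drop below $1$; small perturbations within this class send $f$ to wildly varying residues, so the image $f(\pi^{-1}([\bar{u}:\bar{v}]))$ cannot be contained in the complement of any finite collection of open unit disks away from a single residue class, yielding the required contradiction. Equivalently, one may argue via the local degree: $\deg\bar{f}\le d$ always, and $f^{-1}(\zeta_{\rm Gauss})=\zeta_{\rm Gauss}$ forces the local Berkovich degree $\deg_{\zeta_{\rm Gauss}}(f)=d$, which coincides with $\deg\bar{f}$; equality $\deg\bar{f}=d$ is precisely good reduction.

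The main obstacle is the rigor of the converse direction: showing that a common reduction zero of $\bar{g},\bar{h}$ obstructs the finite-disk structure required by Lemma \ref{yl:image}. The cleanest route is to invoke the degree-of-reduction identity $\deg\bar{f}=\deg_{\zeta_{\rm Gauss}}(f)$ when $\zeta_{\rm Gauss}$ is $f$-fixed, but this identity itself rests on the same type-II-point analysis in \cite{BR}; a self-contained argument must instead track how Taylor-expansion coefficients of $g,h$ at a common reduction zero force unbounded residue-class variation in the image, contradicting the existence of finitely many witnessing disks $\{a_{i}\},\{b_{j}\}$.
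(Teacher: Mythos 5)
The first thing to note is that the paper does not prove this lemma at all: it is stated with an attribution to \cite{BR} and used as a black box (e.g.\ in the proof of Theorem \ref{thm:reduction}), so there is no in-paper argument to compare yours against. Judged on its own terms, your sketch follows the standard Baker--Rumely route (translate good reduction into the statement that $f$ permutes residue classes, then pass to the Gauss point via Lemma \ref{yl:image}), and the forward direction is essentially right in outline. Even there, though, the last step is compressed: from $f(\zeta_{Gauss})=\zeta_{Gauss}$ you still must rule out a second Berkovich preimage, and "each residue class has a unique $f$-preimage class" does not by itself do this, since a point $x\ne\zeta_{Gauss}$ sits inside the Berkovich disk $\mathcal{B}_{\alpha}$ attached to a single direction $\alpha\in\mathbb{P}^{1}(k)$, and the closure of $f(\mathcal{B}_{\alpha})$ can a priori touch $\zeta_{Gauss}$. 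What closes it is that every neighborhood of $\zeta_{Gauss}$ meets type I points of infinitely many residue classes, while good reduction confines the type I points of $f(\mathcal{B}_{\alpha})$ to the single residue class $\pi^{-1}(\bar f(\alpha))$; since $f$ is open (or by density of type I points), $f(x)=\zeta_{Gauss}$ is impossible for $x\in\mathcal{B}_{\alpha}$.

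The genuine gap is the one you flag yourself: the converse. "Small perturbations send $f$ to wildly varying residues" is the right intuition but is not yet an argument, and the fallback identity $\deg\bar f=\deg_{\zeta_{Gauss}}(f)$ imports exactly the result you are trying to prove. The missing idea that closes the converse cheaply is the tree structure of $\mathbb{P}^{1}_{Ber}$. If $\bar g,\bar h$ have a common nontrivial zero in direction $\alpha$, then on the residue class $\pi^{-1}(\alpha)$ both $|g|$ and $|h|$ drop below $1$ and one checks (using that $g,h$ have no common root in $\mathbb{C}_{p}$, via Newton polygons or a direct estimate) that $f$ takes type I values in at least two distinct residue classes there. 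The Berkovich open disk $\mathcal{B}_{\alpha}$ is connected and $f$ is continuous and open, so $f(\mathcal{B}_{\alpha})$ is an arcwise connected subset of the uniquely path-connected space $\mathbb{P}^{1}_{Ber}$ containing two points whose unique arc passes through $\zeta_{Gauss}$ (this is exactly the separation property of the Gauss point quoted in Section 2). Hence $\zeta_{Gauss}\in f(\mathcal{B}_{\alpha})$, which produces a preimage of $\zeta_{Gauss}$ lying in $\mathcal{B}_{\alpha}$, hence different from $\zeta_{Gauss}$ --- contradicting $f^{-1}(\zeta_{Gauss})=\{\zeta_{Gauss}\}$. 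With that connectedness step supplied (and the openness/density point in the forward direction), your plan becomes a complete proof consistent with Definition \ref{df:reduction} and Lemma \ref{yl:type}; without it, both halves stop just short of the conclusion.
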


\section{The p-adic M\"obius maps}
We classify non-unit elements in PSL$(2,\mathbb{C}_{p})=\mathrm{SL}(2,\mathbb{C}_{p})/\{\pm I\}$. Since the product of all eigenvalues of $g\in $ PSL$(2,\mathbb{C}_{p})$ is one,  either the absolute value of each eigenvalue of $g$ is one or there exists at least one eigenvalue whose absolute value is larger than 1. Thus each non-unit element $g\in$ PSL$(2,\mathbb{C}_{p})$ falls into the following four classes:

(a) $g$ is said to be {\bf parabolic} if the absolute value of any eigenvalue of $g$ is 1, and $g$ can not be conjugated to a diagonal matrix.

(b) $g$ is said to be {\bf loxodromic} if there exists at least eigenvalue of $g$ whose absolute value is larger than $1$.

(c) $g\neq I$ is said to be {\bf elliptic} if the absolute value  of any eigenvalue of $g$ is $1$, and $g$ can be conjugated to a diagonal matrix.

In this paper, we classifies the elliptic elements more precisely.

(d) $g$ is said to be {\bf tame elliptic} if the two eigenvalues $\lambda_{1},\lambda_{2}$ of $g$ satisfy $|\lambda_{1}-1|=|\lambda_{2}-1|=1$.

(e) $g$ is said to be {\bf wild elliptic} if  one of the eigenvalues of $g$ lies in the disc $D(1,1)^{-}$.

For $g=(a_{ij})$ in the matrix ring $\mathrm{M}(m,\mathbb{C}_{p})$, the norm of $g$ is defined  by $\parallel g \parallel = \displaystyle\mathop{\max}_{1\le i\le m,1\le j\le m}\{|a_{ij}|\}$.  Obviously,  $\parallel g\parallel=0$ implies that each $a_{ij}=0$. It is easy to verify that $\parallel \alpha g\parallel=|\alpha|\parallel g\parallel$, $\parallel g+h\parallel\le \max\{\parallel g\parallel, \parallel h\parallel\}$ and $\parallel gh\parallel\le \parallel g\parallel\parallel h\parallel$.

For any element $g\in \mathrm{PSL}(2,\mathbb{C}_{p})$, there exist two lifts $g_{1},g_{2}$ in $\mathrm{SL}(2,\mathbb{C}_{p})$ with $\parallel g_{1}\parallel=\parallel g_{2}\parallel$. We define $\parallel g\parallel=\parallel g_{1}\parallel=\parallel g_{2}\parallel$. If $g,h\in \mathrm{PSL}(2,\mathbb{C}_{p})$ correspond the lifts $g_{1},g_{2}\in \mathrm{SL}(2,\mathbb{C}_{p})$ and $h_{1},h_{2}\in \mathrm{SL}(2,\mathbb{C}_{p})$  respectively, then we define $\parallel g-h\parallel=\displaystyle\mathop{\inf}_{1\le i\le2, 1\le j\le 2}\parallel g_{i}-h_{j}\parallel$.


If $d=1$, $\phi$ has an algebraic inverse and thus induces an automorphism of $\mathbb{P}_{Ber}^{1}$. Define $\mathrm{Aut}(\mathbb{P}_{Ber}^{1})$ to be the group of automorphisms of $\mathbb{P}_{Ber}^{1}$.  The following lemmas can be found in \cite{BR}.

\begin{lem}[\cite{BR}]\label{yl:iso}The path distance metric $\rho(x,y)$ on $\mathbb{H}_{Ber}$ is independent of the choice of homogenous coordinates on $\mathbb{P}^{1}_{Ber}$, in the sense that if $h(z)\in \mathbb{C}_{p}(z)$ is a $p$-adic M\"obius map, then $\rho(h(x),h(y))=\rho(x,y)$ for all $x,y\in \mathbb{H}_{Ber}$.

\end{lem}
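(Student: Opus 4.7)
The plan is to use the fact that every $p$-adic M\"obius map decomposes as a finite composition of three types of elementary generators: translations $\tau_{b}(z)=z+b$, scalings $\delta_{a}(z)=az$ with $a\in \mathbb{C}_{p}^{\ast}$, and the inversion $\iota(z)=1/z$. This follows from the identity $\frac{az+b}{cz+d}=\frac{a}{c}-\frac{ad-bc}{c^{2}}\cdot \frac{1}{z+d/c}$ when $c\neq 0$, and is immediate when $c=0$. Since a composition of isometries is an isometry, it suffices to verify $\rho(h(x),h(y))=\rho(x,y)$ for $h$ in each of these three families. The main tools are the formula $\rho(x,y)=2\log\mathrm{diam}(x\vee y)-\log\mathrm{diam}(x)-\log\mathrm{diam}(y)$ and Lemma~\ref{yl:image}, which records how rational maps transform disks.

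For a translation $\tau_{b}$, one has $\tau_{b}(D(a,r))=D(a+b,r)$, so diameters are preserved on type II points and the join operation commutes with $\tau_{b}$; substituting into the formula for $\rho$ gives the invariance immediately. For a scaling $\delta_{a}$, we have $\delta_{a}(D(c,r))=D(ac,|a|r)$, so every diameter is multiplied by $|a|$, and the resulting terms cancel via $2\log|a|-\log|a|-\log|a|=0$. This disposes of the affine case.

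The substantive case is the inversion $\iota$. Its action on type II points is the following: $\iota$ sends the point identified with $D(a,r)$ to the point identified with $D(1/a,r/|a|^{2})$ when $|a|>r$, and sends the point identified with $D(0,r)$ to the point identified with $D(0,1/r)$. When $D_{x}$, $D_{y}$, and $D_{x\vee y}$ all avoid $0$, the strong triangle inequality forces any representative centers $a_{x}\in D_{x}\subset D_{x\vee y}$ and $a_{y}\in D_{y}\subset D_{x\vee y}$ to share the absolute value of a center $b$ of $D_{x\vee y}$, namely $|a_{x}|=|a_{y}|=|b|$; the common rescaling factor $1/|b|^{2}$ on all three diameters then cancels in $\rho$ in exactly the same way as in the scaling case.

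The main obstacle is the boundary case where the disk $D_{x\vee y}$ contains $0$. Here one must track the action of $\iota$ on each of $x$, $y$, and $x\vee y$ using Lemma~\ref{yl:image}, splitting into subcases according to whether $D_{x}$ and $D_{y}$ individually contain $0$. The strong triangle inequality continues to pin down the precise numerical relations between the radii of $\iota(x)$, $\iota(y)$, and $\iota(x)\vee \iota(y)$ (for example, when $D_{x}=D(0,r_{x})\supset D_{y}=D(a_{y},r_{y})$ with $|a_{y}|<r_{x}$, one computes that $\iota(x)\vee \iota(y)$ has diameter $1/|a_{y}|$, and the bookkeeping $\log r_{x}-\log r_{y}$ on both sides matches), and a short case-by-case computation confirms the invariance of $\rho$. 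Once the identity is established on type II points, it extends to all of $\mathbb{H}_{Ber}$ by density of type II points and continuity of $\rho$ and $\iota$.
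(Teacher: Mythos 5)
This lemma is stated in the paper as a citation to \cite{BR} and is given no proof there, so there is nothing internal to compare against; your argument supplies a correct, self-contained proof along the standard lines of the source. Reducing to the generators $z\mapsto z+b$, $z\mapsto az$, $z\mapsto 1/z$, computing their action on closed disks, and checking the formula $\rho(x,y)=2\log\mathrm{diam}(x\vee y)-\log\mathrm{diam}(x)-\log\mathrm{diam}(y)$ on type II points before passing to all of $\mathbb{H}_{Ber}$ by density is exactly the right strategy, and your key computations (the rescaling by $1/|b|^{2}$ when $0\notin D_{x\vee y}$, and the $\log r_{x}-\log r_{y}$ bookkeeping in the boundary case) are correct. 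The remaining subcases of the inversion that you leave as ``a short case-by-case computation'' do all close in the same way, so there is no gap, only some routine verification left implicit.
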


\begin{lem}[\cite{QY}]\label{yl:3.1} Let $w$ satisfy $|w|=\lambda|w-a|$, where $\lambda\in |\mathbb{C}_{p}^{\ast}|$, and $w,a\in \mathbb{C}_{p}$.

$(1)$ If $\lambda>1$, then $w\in D(a,\frac{|a|}{\lambda})\setminus D(a,\frac{|a|}{\lambda})^{-}$.

$(2)$ If $\lambda=1$, then $w\in \mathbb{P}^{1}(\mathbb{C}_{p})\setminus D(0,|a|)^{-}\cup D(a,|a|)^{-}$.

$(3)$ If $0<\lambda<1$, then $w\in D(0,\lambda|a|)\setminus D(0,\lambda|a|)^{-}$.

\end{lem}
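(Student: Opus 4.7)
The plan is a direct case analysis on $\lambda$, relying only on the strong triangle inequality and its standard sharpening: if $x,y\in\mathbb{C}_p$ satisfy $|x|\neq|y|$, then $|x+y|=\max\{|x|,|y|\}$. In each of the three cases I will apply this sharpening to the trivial identity $a=w-(w-a)$.

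For part (1), $\lambda>1$ gives $|w-a|=|w|/\lambda<|w|$, so in the decomposition $a=w-(w-a)$ the two summands have unequal moduli; the sharpening then yields $|a|=|w|$. Substituting this back into $|w|=\lambda|w-a|$ produces $|w-a|=|a|/\lambda$, which is exactly the statement that $w$ lies on the sphere $D(a,|a|/\lambda)\setminus D(a,|a|/\lambda)^{-}$. Part (3) is symmetric: $0<\lambda<1$ forces $|w-a|>|w|$, the sharpening applied to the same decomposition gives $|a|=|w-a|$, and hence $|w|=\lambda|a|$, placing $w$ on the sphere $D(0,\lambda|a|)\setminus D(0,\lambda|a|)^{-}$ around the origin.

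For part (2), with $\lambda=1$, the hypothesis reduces to $|w|=|w-a|$, and I must show that both common values are at least $|a|$, i.e.\ that $w$ lies outside each of the open discs $D(0,|a|)^{-}$ and $D(a,|a|)^{-}$. If instead $|w|<|a|$, then $w$ and $-a$ have unequal moduli, so the sharpening applied to $w-a$ gives $|w-a|=|a|\neq|w|$, a contradiction; the same argument with the roles of $w$ and $w-a$ interchanged rules out $|w-a|<|a|$. This puts $w$ in $\mathbb{P}^1(\mathbb{C}_p)\setminus(D(0,|a|)^{-}\cup D(a,|a|)^{-})$, as claimed. There is no genuine obstacle here: every case reduces to a single application of the sharpening, and the three cases are mutually exclusive precisely because the hypothesis compares $\lambda$ to $1$.
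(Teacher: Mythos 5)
Your proof is correct: each case follows from the sharpened ultrametric equality $|x+y|=\max\{|x|,|y|\}$ when $|x|\neq|y|$, applied to $a=w-(w-a)$, and the only omitted detail is the degenerate situation $w=0$ or $a=0$, where the claims hold trivially. Note that the paper itself offers no proof to compare against --- Lemma \ref{yl:3.1} is quoted from \cite{QY} --- but your argument is the standard one and is exactly what such a proof should look like.
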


\begin{pro} \label{pro:conj}A $p$-adic M\"obius map $g$ is

$(1)$ {\it parabolic} if it is conjugate to $z\rightarrow z+1$;

$(2)$ {\it elliptic} if it is conjugate to $z\rightarrow kz$ for some $k$ with $|k|=1$, $k\neq 1$;

$(3)$ {\it loxodromic} if it is conjugate to $z\rightarrow kz$ for some $k$ with $|k|>1$.

\end{pro}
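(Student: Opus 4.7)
The plan is to lift $g\in\mathrm{PSL}(2,\mathbb{C}_{p})$ to $\tilde g\in\mathrm{SL}(2,\mathbb{C}_{p})$ and exploit that $\mathbb{C}_{p}$ is algebraically closed, so Jordan canonical form is available. Write the eigenvalues of $\tilde g$ as $\lambda_{1},\lambda_{2}\in\mathbb{C}_{p}$ with $\lambda_{1}\lambda_{2}=1$. I will split into cases matching the definitions (a)--(c) recalled in Section 3 and produce an explicit conjugator in $\mathrm{SL}(2,\mathbb{C}_{p})$ realising the claimed normal form; the converse implication in each case reduces to reading off the eigenvalues of the normal form.

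In the loxodromic case $|\lambda_{1}|>1$, one has $|\lambda_{2}|=|\lambda_{1}|^{-1}<1$, so $\lambda_{1}\ne\lambda_{2}$ and $\tilde g$ is diagonalizable: there is $P\in\mathrm{GL}(2,\mathbb{C}_{p})$ with $P^{-1}\tilde g P=\begin{pmatrix}\lambda_{1}&0\\0&\lambda_{2}\end{pmatrix}$. Because every nonzero element of $\mathbb{C}_{p}$ has a square root, we may rescale $P$ by $(\det P)^{-1/2}$ so that $P\in\mathrm{SL}(2,\mathbb{C}_{p})$. The resulting diagonal matrix realises the Möbius map $z\mapsto(\lambda_{1}/\lambda_{2})z$, and $k:=\lambda_{1}/\lambda_{2}=\lambda_{1}^{2}$ satisfies $|k|>1$. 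In the elliptic case both $|\lambda_{i}|=1$ and $\tilde g$ is diagonalizable by hypothesis; if $\lambda_{1}=\lambda_{2}$ then $\lambda_{1}^{2}=1$, so $\lambda_{1}=\pm 1$ and $\tilde g=\pm I$, contradicting $g\ne I$. Hence $\lambda_{1}\ne\lambda_{2}$ and the same diagonalization yields $k=\lambda_{1}/\lambda_{2}$ with $|k|=1$ and $k\ne 1$.

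In the parabolic case, $|\lambda_{1}|=|\lambda_{2}|=1$ and $\tilde g$ is not diagonalizable, forcing $\lambda_{1}=\lambda_{2}=\lambda$ with $\lambda^{2}=1$; since $-\tilde g$ represents the same class in $\mathrm{PSL}(2,\mathbb{C}_{p})$, I may take $\lambda=1$. Then $\tilde g$ is $\mathrm{SL}(2,\mathbb{C}_{p})$-conjugate to a Jordan block $\begin{pmatrix}1&c\\0&1\end{pmatrix}$ with $c\ne 0$, and a direct computation shows that conjugating by the diagonal matrix with entries $c^{1/2},c^{-1/2}$ (which lies in $\mathrm{SL}(2,\mathbb{C}_{p})$) sends $c$ to $1$, producing the representative $z\mapsto z+1$. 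In each of the three cases, the converse is immediate: compute the eigenvalues of the stated normal form and compare with the defining conditions of parabolic, elliptic, and loxodromic.

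The main obstacle is purely bookkeeping: ensuring the conjugator can always be rescaled to $\mathrm{SL}(2,\mathbb{C}_{p})$ (which uses the square-root property of the algebraically closed field $\mathbb{C}_{p}$) and carefully managing the $\pm I$ ambiguity when passing between $\mathrm{SL}(2,\mathbb{C}_{p})$ and $\mathrm{PSL}(2,\mathbb{C}_{p})$. No genuinely non-archimedean subtlety intervenes here; the tame/wild refinement of the elliptic class plays no role in the classification up to conjugation, and the argument is structurally identical to the archimedean one, with $|\cdot|$ on $\mathbb{C}_{p}$ replacing the usual modulus.
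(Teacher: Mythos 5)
Your proof is correct, and the paper actually states this proposition without any proof, so there is nothing to diverge from: your Jordan-form argument over the algebraically closed field $\mathbb{C}_{p}$ is exactly the standard normalization the paper implicitly invokes (e.g.\ when the proof of Proposition \ref{pro:tr} begins by assuming $g(z)=\frac{a}{d}z$). The case analysis matches the paper's definitions (a)--(c), the $\mathrm{SL}$-rescaling and $\pm I$ bookkeeping are handled properly, and the conjugation of the unipotent block by $\mathrm{diag}(c^{1/2},c^{-1/2})$ correctly normalizes the translation length to $1$.
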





\begin{pro} Let $f$ and $g$ be two $p$-adic M\"obius maps neither of which is the identity.
Then $f$ and $g$ are conjugate  if and only if $trace(f) = trace(g)$.
\end{pro}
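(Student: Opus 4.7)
The plan is to exploit the canonical forms provided by Proposition~\ref{pro:conj}, reducing the problem to direct trace computations on explicit representatives. The forward direction is immediate: if $f$ and $g$ are conjugate in $\mathrm{PSL}(2,\mathbb{C}_p)$, then lifting to $\mathrm{SL}(2,\mathbb{C}_p)$ gives $F = \pm H G H^{-1}$ for suitable lifts, so the lifted traces agree up to sign. Interpreting the trace on $\mathrm{PSL}(2,\mathbb{C}_p)$ as being well-defined only up to sign (equivalently, working with $\mathrm{trace}^2$), we obtain $\mathrm{trace}(f) = \mathrm{trace}(g)$.

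For the converse, I would invoke Proposition~\ref{pro:conj} to reduce each of $f,g$ to canonical form: either $z \mapsto z+1$ (parabolic) or $z \mapsto kz$ for some $k \neq 1$. The $\mathrm{SL}$-lift of the former is $\begin{pmatrix} 1 & 1 \\ 0 & 1 \end{pmatrix}$, with trace $\pm 2$, while an $\mathrm{SL}$-lift of the latter is $\mathrm{diag}(\lambda,\lambda^{-1})$ with $\lambda^2 = k$ (and $\sqrt{k}$ exists since $\mathbb{C}_p$ is algebraically closed), whose squared trace equals $k + 2 + k^{-1}$. The key separation is that $k + 2 + k^{-1} = 4$ forces $(k-1)^2 = 0$, hence $k = 1$, which is excluded; so $\mathrm{trace}^2 = 4$ characterizes precisely the parabolic class among non-identity elements. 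Given $\mathrm{trace}(f) = \mathrm{trace}(g)$, either both maps are parabolic---and thus both conjugate to $z \mapsto z+1$---or neither is, and the equation $k + k^{-1} = k' + (k')^{-1}$ forces the unordered pairs to agree, $\{k,k^{-1}\} = \{k',(k')^{-1}\}$. Since $z \mapsto kz$ and $z \mapsto k^{-1}z$ are intertwined by the involution $z \mapsto 1/z$, the two maps are conjugate to the same canonical representative, hence to each other.

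The main subtlety I anticipate is the sign ambiguity of the trace in $\mathrm{PSL}(2,\mathbb{C}_p)$: an $\mathrm{SL}$-lift may be replaced by its negative, so the equality $\mathrm{trace}(f) = \mathrm{trace}(g)$ must be understood as equality up to sign, equivalently $\mathrm{trace}^2(f) = \mathrm{trace}^2(g)$; in particular, the parabolic class corresponds to $\mathrm{trace}^2 = 4$ rather than a single signed value. Once this convention is applied uniformly, the tame/wild dichotomy for elliptic elements plays no additional role, because the conjugacy class of $z \mapsto kz$ is determined entirely by the unordered pair $\{k, k^{-1}\}$, with no further dependence on $|k-1|$ or similar absolute-value conditions on $k$.
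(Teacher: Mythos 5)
The paper states this proposition without any proof, so there is nothing to compare against; your argument is correct and is the standard one. You rightly flag the only real subtlety: the trace is defined on $\mathrm{PSL}(2,\mathbb{C}_p)$ only up to sign, so the hypothesis must be read as $\mathrm{trace}^2(f)=\mathrm{trace}^2(g)$ (consistent with the paper's own use of $(a+d)^2-4$ in Proposition \ref{pro:tr}), and your reduction via the canonical forms of Proposition \ref{pro:conj} --- $\mathrm{trace}^2=4$ isolating the parabolic class, and $k+k^{-1}=k'+(k')^{-1}$ forcing $\{k,k^{-1}\}=\{k',(k')^{-1}\}$ since both pairs are the roots of $X^2-(k+k^{-1})X+1$ --- is complete. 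The one step worth making explicit is that all non-identity parabolics are conjugate to $z\mapsto z+1$ over $\mathbb{C}_p$ because $t^2=1/b$ is solvable in the algebraically closed field; the same algebraic closedness gives the square root $\lambda$ of $k$ used in the lift, and you correctly observe that the tame/wild distinction is irrelevant here.
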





\begin{pro}\label{pro:tr}Let $g\neq I$ be a $p$-adic M\"obius map. Then

$(1)$ $g$ is parabolic if and only if $(a+d)^{2}-4=0$;

$(2)$ $g$ is elliptic if and only if $0<|(a+d)^{2}-4|\le1$;

$(3)$ $g$ is loxodromic if and only if $|(a+d)^{2}-4|>1$.
\end{pro}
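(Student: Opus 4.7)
The plan is to reduce everything to the characteristic polynomial and then exploit the non-archimedean strong triangle inequality. Pick a lift of $g$ in $\mathrm{SL}(2,\mathbb{C}_p)$, so $ad-bc=1$ (this is unambiguous for Proposition~\ref{pro:tr} because the trace is only defined up to sign in $\mathrm{PSL}(2,\mathbb{C}_p)$, but $(a+d)^2$ is well-defined). The eigenvalues $\lambda_1,\lambda_2\in\mathbb{C}_p$ of $g$ satisfy $\lambda_1\lambda_2=1$ and $\lambda_1+\lambda_2=a+d$, hence the key identity
\begin{equation*}
    (\lambda_1-\lambda_2)^2 \;=\; (a+d)^2-4.
\end{equation*}
All three statements will follow by translating the defining conditions (a)--(c) on the eigenvalues into conditions on $|\lambda_1-\lambda_2|^2=|(a+d)^2-4|$.

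First I would dispatch (1): $g$ is parabolic iff it cannot be diagonalised and $|\lambda_1|=|\lambda_2|=1$; together with $\lambda_1\lambda_2=1$ this is equivalent to $\lambda_1=\lambda_2=\pm 1$, i.e.\ the characteristic polynomial has a double root, i.e.\ $(a+d)^2-4=0$. In the converse direction, if $(a+d)^2-4=0$ then the single eigenvalue is $\pm 1$, so either $g=\pm I$ (which is the identity in $\mathrm{PSL}$, excluded by hypothesis $g\neq I$) or the Jordan form is a non-trivial Jordan block, hence parabolic.

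Next I would handle (3) via the strong triangle inequality. If $g$ is loxodromic, then some $|\lambda_i|>1$; since $|\lambda_1||\lambda_2|=1$, the other eigenvalue has absolute value strictly less than $1$, so $|\lambda_1|\neq|\lambda_2|$ and the strong triangle inequality forces $|\lambda_1-\lambda_2|=\max\{|\lambda_1|,|\lambda_2|\}>1$, giving $|(a+d)^2-4|>1$. Conversely, if $|(a+d)^2-4|>1$, then having both $|\lambda_i|\le 1$ would yield $|\lambda_1-\lambda_2|\le\max\{|\lambda_1|,|\lambda_2|\}\le 1$, a contradiction; so some $|\lambda_i|>1$ and $g$ is loxodromic. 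Statement (2) is then the remaining case: $g$ elliptic means $|\lambda_1|=|\lambda_2|=1$ with $\lambda_1\neq\lambda_2$, so $|(a+d)^2-4|=|\lambda_1-\lambda_2|^2$ is nonzero and at most $1$ (again by the ultrametric inequality). The converse in (2) follows because $0<|(a+d)^2-4|\le 1$ rules out (1) and (3), so by Proposition~\ref{pro:conj} the only remaining type is elliptic.

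I do not expect a serious obstacle here: the argument is essentially the algebraic identity $(\lambda_1-\lambda_2)^2=(a+d)^2-4$ combined with one application of the strong triangle inequality in each direction. The only mild subtlety is the passage between $\mathrm{SL}(2,\mathbb{C}_p)$ and $\mathrm{PSL}(2,\mathbb{C}_p)$, which is handled by working with the squared trace, and the care needed to exclude the identity in (1). The division into cases also dovetails cleanly with the tame/wild refinement introduced earlier, though that distinction plays no role in the present statement.
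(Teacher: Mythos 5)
Your proposal is correct and follows essentially the same route as the paper: both arguments reduce to the identity $(a+d)^2-4=(\lambda-1/\lambda)^2$ for the eigenvalues of an $\mathrm{SL}(2,\mathbb{C}_p)$ lift and then apply the strong triangle inequality case by case. If anything, you are slightly more careful than the paper's own write-up, which in the elliptic case writes the strict bound $|\lambda-1/\lambda|<1$ (it can equal $1$, e.g.\ for a tame elliptic element) and in the converse direction misstates the conclusion as $|\lambda|<1$ rather than $|\lambda|=1$; your version handles both points correctly.
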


\begin{proof} Since the trace of the matrix is invariant under the conjugation, without lose of generality, we can assume that $g(z)=\frac{a}{d}z$, if $g$ is elliptic or loxodromic which yields that $tr(g)=a+d=\lambda+1/\lambda$, where $\lambda$ is the eigenvalue of $g$. Thus $(a+d)^{2}-4=(\lambda-1/\lambda)^{2}$. By the non-archimedean property of the metric, we have $|\lambda-1/\lambda|>1$, if $g$ is loxodromic, and $0<|\lambda-1/\lambda|<1$, if $g$ is elliptic. If $g$ is parabolic, we can assume that $g(z)=z+1$  which yields that $a+d=2$.

Conversely, if $(a+d)^{2}-4=0$, then $\lambda-1/\lambda=0$ which denotes $\lambda^{2}=1$. Since $g\in {\rm PSL}(2,\mathbb{C}_{p})$, we have $\lambda=1$ which yields that $g$ is parabolic. If $|(a+d)^{2}-4|>1$, then $|\lambda-1/\lambda|>1$ which denotes that $|\lambda|>1$ or $|1/\lambda|>1$. Thus $g$ is loxodromic. If $0<|(a+d)^{2}-4|<1$, then $0<|\lambda-1/\lambda|<1$ which yields $|\lambda|<1$. Thus $g$ is elliptic.

\end{proof}

\section{The properties of $\mathrm{PSL}(2,\mathcal{O}_{p})$}

\begin{lem}[\cite{SJ}] If $f\in {\rm PSL}(2,\mathcal{O}_{p})$, we have $\rho_{v}(f(z),f(w))=\rho_{v}(z,w)$.

\end{lem}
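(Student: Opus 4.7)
The plan is to give a direct computation using the matrix representation. Write $f(z) = (az+b)/(cz+d)$ where $a,b,c,d \in \mathcal O_p$ and $|ad-bc|=1$, so in particular every entry has absolute value at most $1$.

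First, I compute the numerator of $\rho_v(f(z),f(w))$: a routine algebraic manipulation gives
$$
f(z)-f(w) = \frac{(ad-bc)(z-w)}{(cz+d)(cw+d)},
$$
so $|f(z)-f(w)| = |z-w|/(|cz+d||cw+d|)$, using $|ad-bc|=1$.

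The heart of the argument is to establish the identity
$$
\max\{|az+b|, |cz+d|\} = \max\{1,|z|\},
$$
from which one immediately gets $\max\{1,|f(z)|\} = \max\{1,|z|\}/|cz+d|$ and likewise for $w$; substituting into the chordal quotient then cancels the $|cz+d||cw+d|$ and yields $\rho_v(f(z),f(w)) = \rho_v(z,w)$. The upper bound $\max\{|az+b|,|cz+d|\} \le \max\{1,|z|\}$ is immediate from the strong triangle inequality since $|a|,|b|,|c|,|d|\le 1$. For the lower bound, I would use the two identities
$$
d(az+b) - b(cz+d) = (ad-bc)z, \qquad a(cz+d) - c(az+b) = ad-bc,
$$
which, combined with $|ad-bc|=1$ and $|a|,|b|,|c|,|d|\le1$ and another application of the strong triangle inequality, force $\max\{|az+b|,|cz+d|\} \ge 1$ and $\ge |z|$.

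The main obstacle, and really the only subtle point, is handling the boundary cases: when $z=\infty$ or $w=\infty$, and when $cz+d=0$ so that $f(z)=\infty$. These are addressed by taking the analogous definition of $\rho_v$ at infinity and verifying the same cancellation directly, or equivalently by taking limits along the type I points (using that the chordal metric is continuous on $\mathbb P^1(\mathbb C_p)$); in each case the key identity $\max\{|az+b|,|cz+d|\}=\max\{1,|z|\}$ again controls the computation and delivers the claimed isometry.
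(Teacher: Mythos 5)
Your proof is correct, and the key identity $\max\{|az+b|,|cz+d|\}=\max\{1,|z|\}$ together with the determinant formula for $f(z)-f(w)$ is exactly the right mechanism. Note that the paper itself offers no proof of this lemma; it is quoted from the reference [SJ], where the argument is essentially the one you give, but carried out in normalized homogeneous coordinates $[x:y]$ with $\max\{|x|,|y|\}=1$: one shows that a matrix in $\mathrm{SL}(2,\mathcal{O}_p)$ sends normalized coordinates to normalized coordinates (which is precisely your identity in homogeneous form) and that the cross term transforms by the determinant. That formulation buys you a uniform treatment of the point at infinity and of the case $cz+d=0$, which is the only place where your affine version needs the extra boundary discussion you flag at the end; otherwise the two arguments are the same computation.
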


\begin{lem}[\cite{QY}] \label{yl:main} Let $g$ be any $p$-adic M\"obius map. Then $$\rho(g(\zeta_{Gauss}),\zeta_{Gauss})=2\log_{p}\|g\|.$$

\end{lem}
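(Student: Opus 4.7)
The plan is to reduce the general case to that of a diagonal Möbius map via a Cartan-type decomposition and then compute directly. First I would establish that any lift $G=\begin{pmatrix}a&b\\c&d\end{pmatrix}\in\mathrm{SL}(2,\mathbb{C}_{p})$ of $g$ can be written as $G=UAV$ with $U,V\in\mathrm{SL}(2,\mathcal{O}_{p})$ and $A=\mathrm{diag}(\alpha,\alpha^{-1})$ for some $\alpha\in\mathbb{C}_{p}^{\ast}$. After absorbing a permutation matrix from $\mathrm{SL}(2,\mathcal{O}_{p})$ if necessary, one may assume $|a|=\|G\|$; then $|b/a|,|c/a|\le 1$, so the unipotent matrices $\begin{pmatrix}1&0\\-c/a&1\end{pmatrix}$ and $\begin{pmatrix}1&-b/a\\0&1\end{pmatrix}$ lie in $\mathrm{SL}(2,\mathcal{O}_{p})$, and multiplying by them on the left and right respectively reduces $G$ to $\mathrm{diag}(a,1/a)$, using $ad-bc=1$. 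The ultrametric inequality $1=|ad-bc|\le\|G\|^{2}$ also forces $\|G\|\ge 1$, so $|\alpha|\ge 1$.

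Next I would verify two invariance properties that allow the reduction to the diagonal element. A direct entry-wise estimate using the ultrametric inequality shows that multiplication on either side by an element of $\mathrm{SL}(2,\mathcal{O}_{p})$ does not increase the matrix norm; applying the same bound to the inverse yields exact preservation, so $\|g\|=\|A\|=|\alpha|$. By Lemmas \ref{yl:gdrdpM} and \ref{yl:gdguass}, elements of $\mathrm{PSL}(2,\mathcal{O}_{p})$ have good reduction and therefore fix $\zeta_{\mathrm{Gauss}}$, so $u(\zeta_{\mathrm{Gauss}})=v(\zeta_{\mathrm{Gauss}})=\zeta_{\mathrm{Gauss}}$. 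Combined with the $\rho$-isometry property of Möbius maps (Lemma \ref{yl:iso}), this gives
\[
\rho(g(\zeta_{\mathrm{Gauss}}),\zeta_{\mathrm{Gauss}})=\rho(uav(\zeta_{\mathrm{Gauss}}),u(\zeta_{\mathrm{Gauss}}))=\rho(a(\zeta_{\mathrm{Gauss}}),\zeta_{\mathrm{Gauss}}).
\]

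Finally I would compute the right-hand side explicitly. The diagonal map acts as the Möbius transformation $a(z)=\alpha^{2}z$, sending $\zeta_{\mathrm{Gauss}}=D(0,1)$ to the Berkovich point corresponding to $D(0,|\alpha|^{2})$. Since $|\alpha|\ge 1$, this disk contains $D(0,1)$ and so equals the join, and the hyperbolic distance formula yields
\[
\rho(a(\zeta_{\mathrm{Gauss}}),\zeta_{\mathrm{Gauss}})=2\log_{p}|\alpha|^{2}-\log_{p}1-\log_{p}|\alpha|^{2}=2\log_{p}|\alpha|=2\log_{p}\|g\|,
\]
as required. The main obstacle I anticipate is the clean bookkeeping of the Cartan decomposition, since one must treat separately the cases where different entries attain the maximum (and where some entries vanish), and in each case verify that the auxiliary elementary matrices used for row and column reduction actually lie in $\mathrm{SL}(2,\mathcal{O}_{p})$; once this case analysis is laid out, however, the rest of the argument is a short computation in the tree structure of $\mathbb{H}_{Ber}$.
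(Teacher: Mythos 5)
Your argument is correct and complete. Note that the paper does not actually prove this lemma --- it is imported from \cite{QY} as a black box --- so there is no internal proof to compare against; your Cartan-type decomposition $G=UAV$ with $U,V\in\mathrm{SL}(2,\mathcal{O}_{p})$ and $A=\mathrm{diag}(\alpha,\alpha^{-1})$ is the standard route and every ingredient you invoke is available from the paper's own toolkit. Specifically: the row/column reduction is valid because $1=|ad-bc|\le\|G\|^{2}$ forces the maximal entry to be nonzero (so the case of vanishing entries you worried about never arises once the maximum is moved to the $(1,1)$ slot); the norm identity $\|UAV\|=\|A\|=|\alpha|$ is exactly the elementary two-sided estimate appearing in the proof of $(1)\Rightarrow(6)$ of Theorem \ref{thm:1}, and that implication is purely algebraic and does not rely on the present lemma, so there is no circularity; the fact that $\mathrm{PSL}(2,\mathcal{O}_{p})$ fixes $\zeta_{Gauss}$ follows from Lemma \ref{yl:gdrdpM} together with Lemma \ref{yl:gdguass}; and combining this with the isometry property of Lemma \ref{yl:iso} correctly reduces everything to the diagonal element. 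The final computation $\rho(\zeta_{Gauss},\,\zeta_{0,|\alpha|^{2}})=2\log_{p}|\alpha|^{2}-\log_{p}|\alpha|^{2}=2\log_{p}|\alpha|=2\log_{p}\|g\|$ is a correct application of the path-distance formula, since $D(0,1)\subset D(0,|\alpha|^{2})$ when $|\alpha|\ge1$ makes the larger disk the join.
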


\begin{lem}\label{lem:lip1} Let $g$ be any $p$-adic M\"obius map. Then the best Lipschitz
constant (relative the chordal metric) for $g$ is given by $L(g)=p^{\rho(\zeta_{Gauss},g(\zeta_{Gauss}))}$, namely $\rho_{v}(g(z),g(w))\le L(g)\rho_{v}(z,w)$. Furthermore, there exist at least two points $z,w\in \mathbb{P}^{1}(\mathbb{C}_{p})$ such that $\rho_{v}(g(z),g(w))= L(g)\rho_{v}(z,w)$.
\end{lem}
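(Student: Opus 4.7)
The plan is to use the decomposition supplied by Theorem \ref{thm:5} to reduce to the diagonal scaling $k_{0}(z)=kz$, where the Lipschitz constant can be computed directly. By Theorem \ref{thm:5}, write $g=uf$ with $u\in\mathrm{PSL}(2,\mathcal{O}_{p})$ and $f$ either the identity or a loxodromic map with antipodal fixed points $\alpha,\beta$. In the nontrivial case, pick $v\in\mathrm{PSL}(2,\mathcal{O}_{p})$ with $v(0)=\alpha$ and $v(\infty)=\beta$; then $v^{-1}fv$ is loxodromic and fixes $0$ and $\infty$, so $v^{-1}fv=k_{0}$ for some $k_{0}(z)=kz$ with $|k|>1$ (possibly after swapping $\alpha,\beta$ using $z\mapsto 1/z$, which belongs to $\mathrm{PSL}(2,\mathcal{O}_{p})$). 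Setting $u_{1}=uv$ and $u_{2}=v^{-1}$, both in $\mathrm{PSL}(2,\mathcal{O}_{p})$, we obtain $g=u_{1}\circ k_{0}\circ u_{2}$; the case $f=I$ collapses to $k=1$.

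Since $u_{1}$ and $u_{2}$ are chordal isometries by the first lemma of Section~4, the Lipschitz ratio of $g$ equals that of $k_{0}$:
\[
\frac{\rho_{v}(g(z),g(w))}{\rho_{v}(z,w)} \;=\; \frac{\rho_{v}(k_{0}(u_{2}z),k_{0}(u_{2}w))}{\rho_{v}(u_{2}z,u_{2}w)},
\]
so $L(g)=L(k_{0})$, and an extremal pair for $g$ is the $u_{2}^{-1}$-image of an extremal pair for $k_{0}$. For $k_{0}(z)=kz$ with $|k|\ge 1$, the chordal formula yields
\[
\frac{\rho_{v}(kz,kw)}{\rho_{v}(z,w)} \;=\; |k|\cdot\frac{\max(1,|z|)}{\max(1,|kz|)}\cdot\frac{\max(1,|w|)}{\max(1,|kw|)},
\]
and each factor $\max(1,|z|)/\max(1,|kz|)$ lies in $[1/|k|,1]$, equal to $1$ precisely when $|z|\le 1/|k|$. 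Hence this ratio is bounded above by $|k|$, with equality whenever $|z|,|w|\le 1/|k|$; the pair $z=0,w=1/k$ attains equality. Thus $L(k_{0})=|k|$ is attained, and so is $L(g)=|k|$, via $(u_{2}^{-1}(0),u_{2}^{-1}(1/k))$.

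To match $|k|$ with $p^{\rho(\zeta_{Gauss},g(\zeta_{Gauss}))}$, observe that $u_{1}$ and $u_{2}$ have good reduction by Lemma \ref{yl:gdrdpM} and therefore fix $\zeta_{Gauss}$ by Lemma \ref{yl:gdguass}. Combined with the isometric action of M\"obius maps on $\mathbb{H}_{Ber}$ (Lemma \ref{yl:iso}), this gives $\rho(\zeta_{Gauss},g(\zeta_{Gauss}))=\rho(\zeta_{Gauss},k_{0}(\zeta_{Gauss}))$. Since $k_{0}(\zeta_{Gauss})$ corresponds to the disk $D(0,|k|)$ by Lemma \ref{yl:image}, the hyperbolic distance formula gives $\rho(\zeta_{Gauss},k_{0}(\zeta_{Gauss}))=\log_{p}|k|$, in agreement with Lemma \ref{yl:main} since the $\mathrm{SL}_{2}$-lift $\mathrm{diag}(k^{1/2},k^{-1/2})$ has norm $|k|^{1/2}$. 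Therefore $p^{\rho(\zeta_{Gauss},g(\zeta_{Gauss}))}=|k|=L(g)$. The main obstacle in this plan is the four-fold case analysis for $L(k_{0})=|k|$: each of the regimes $|z|,|w|$ versus $1/|k|$ requires a separate use of the strong triangle inequality to simplify $|kz-kw|$ and the $\max(1,\cdot)$ terms, together with the degenerate case $z$ or $w=\infty$.
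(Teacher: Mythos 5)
Your proof is correct, but it follows a genuinely different route from the paper's. The paper conjugates $g$ by maps fixing $\zeta_{Gauss}$ so that it fixes $\infty$, reducing to the affine normal form $g(z)=\frac{az+b}{d}$ with $ad=1$, and then carries out a long exhaustive case analysis over the relative positions of the disks $D_{1}=D(-b/a,\cdot)$ and $D_{2}=D(0,1)$, splitting further into $|a|=1$ and $|a|<1$. You instead invoke the decomposition $g=u_{1}\circ k_{0}\circ u_{2}$ with $u_{i}\in\mathrm{PSL}(2,\mathcal{O}_{p})$ and $k_{0}(z)=kz$, which collapses the whole computation to the two-line factorization of the chordal ratio for a diagonal map and produces the explicit extremal pair $(0,1/k)$; this is considerably cleaner and makes the identity $L(g)=p^{\rho(\zeta_{Gauss},g(\zeta_{Gauss}))}$ transparent. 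The one caveat is ordering: Theorem \ref{thm:5} is only proved in Section 7 (as Theorem \ref{thm:de}), so your argument is a forward reference. It is not circular --- the paper's proof of Theorem \ref{thm:de} rests on Lemma \ref{thm:1.1} and the good-reduction lemmas \ref{yl:gdrdpM}, \ref{yl:gdguass} (both quoted from the literature), and the chordal-isometry property of $\mathrm{PSL}(2,\mathcal{O}_{p})$ you use is the cited Silverman lemma rather than Theorem \ref{thm:1} --- but if your proof replaced the paper's, the decomposition theorem would have to be proved before Section 4, or at least the non-dependence made explicit. What each approach buys: the paper's brute-force version is self-contained at that point of the text and, as a by-product, establishes the affine normal form that Theorem \ref{thm:ucM} later cites; yours trades that for brevity and a conceptual explanation of why the Lipschitz constant is governed by the displacement of the Gauss point.
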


\begin{proof} We can assume that $g(z)=\frac{az+b}{cz+d}$. The element $g$ has at least one fixed point $a_{g}$. If  $|a_{g}|\le 1$, let $h(z)=z-a_{g}$ and  $\iota(z)=1/z$, and then  $\iota hgh^{-1}\iota^{-1}$ fixes $\infty$. Since $h$ and $\iota$ fix the point $\zeta_{Gauss}$, we have $$\rho(\iota hgh^{-1}\iota^{-1}(\zeta_{Gauss}),\zeta_{Gauss})=\rho(\iota hg(\zeta_{Gauss}),\zeta_{Gauss})=\rho(g(\zeta_{Gauss}),\zeta_{Gauss})$$ which yields that $L(g)=L(\iota hgh^{-1}\iota^{-1})$. If $|a_{g}|>1$, let $h(z)=z-1/a_{g}$ and $\iota(z)=1/z$. Thus  $\iota h\iota g\iota h^{-1}\iota^{-1}$ fixes $\infty$. Similarly, since $h$ and $\iota$ fix the point $\zeta_{Gauss}$, we have $$\rho(\iota h\iota g\iota h^{-1}\iota^{-1}(\zeta_{Gauss}),\zeta_{Gauss})=\rho(\iota h\iota g(\zeta_{Gauss}),\zeta_{Gauss})=\rho(g(\zeta_{Gauss}),\zeta_{Gauss})$$ which yields that $L(g)=L(\iota h\iota g\iota h^{-1}\iota^{-1})$. Therefore, without loss of generality, we can assume that $g(z)=\frac{az+b}{d}$ with $ad=1$.  If $|a|>1$,  we consider the inverse $g^{-1}(z)=\frac{d}{a}z-\frac{b}{a}$, since $L(g)=L(g^{-1})$. Thus we can assume that $|a|\le 1$.

For any $z,w\in \mathbb{P}^{1}(\mathbb{C}_{p})$, we have

$$\rho_{v}(g(z),g(w))=\frac{|g(z)-g(w)|}{\max\{1,|g(z)|\}\max\{1,|g(w)|\}}\quad \quad \quad \quad\quad \quad \quad \quad$$

$$\quad \quad  =\frac{|z-w|}{\max\{|az+b|,|d|\}\max\{|aw+b|,|d|\}}$$

$$ \quad\quad \quad\quad \quad\quad \quad \quad \quad=\frac{|z-w|}{\max\{1,|z|\}\max\{1,|w|\}}\frac{\max\{1,|z|\}\max\{1,|w|\}}{\max\{|az+b|,|d|\}\max\{|aw+b|,|d|\}}$$

$$\quad\quad \quad \quad \quad=\rho_{v}(z,w)\frac{\max\{1,|z|\}\max\{1,|w|\}}{\max\{|az+b|,|d|\}\max\{|aw+b|,|d|\}}$$

$$\quad\quad \quad \quad \quad=\rho_{v}(z,w)\frac{|a|^{2}\max\{1,|z|\}\max\{1,|w|\}}{\max\{|a^{2}z+ab|,1\}\max\{|a^{2}w+ab|,1\}}.$$

When $|a|=1$, let $D_{1}=D(-\frac{b}{a},1)$ and $D_{2}=D(0,1)$.  If $D_{1}=D_{2}$, then $|\frac{b}{a}|\le 1$ which yields that $|b|\le |a|=1$, namely $L(g)=1$. If $z,w\in D_{1}=D_{2}$, then $$\frac{|a|^{2}\max\{1,|z|\}\max\{1,|w|\}}{\max\{|a^{2}z+ab|,1\}\max\{|a^{2}w+ab|,1\}}=1,$$ namely $\rho_{v}(g(z),g(w))=L(g)\rho_{v}(z,w)$. If $z\in D_{1}=D_{2}$ and $w\notin D_{1}=D_{2}$, $$\frac{|a|^{2}\max\{1,|z|\}\max\{1,|w|\}}{\max\{|a^{2}z+ab|,1\}\max\{|a^{2}w+ab|,1\}}=\frac{|w|}{|w+b/a|}.$$ Since $|\frac{b}{a}|\le 1$, we have $|w|=|w+\frac{b}{a}|$ which yields that $\frac{|w|}{|w+b/a|}=1$. Thus $\rho_{v}(g(z),g(w))=L(g)\rho_{v}(z,w)$. If $z,w\notin D_{1}=D_{2}$, we have $$\frac{|a|^{2}\max\{1,|z|\}\max\{1,|w|\}}{\max\{|a^{2}z+ab|,1\}\max\{|a^{2}w+ab|,1\}}=\frac{|w||z|}{|z+b/a||w+b/a|}.$$ Since $|\frac{b}{a}|\le 1$, we have $|w|=|w+\frac{b}{a}|$ and $|z|=|z+\frac{b}{a}|$ which yields that $$\frac{|z||w|}{|z+b/a||w+b/a|}=1.$$ Thus $\rho_{v}(g(z),g(w))=L(g)\rho_{v}(z,w)$.  If $D_{1}\cap D_{2}=\emptyset$, then $|\frac{b}{a}|>1$ which yields that $|b|>|a|=1$, namely $L(g)=|b|^{2}$. If $z,w\in D_{1}$, then $|z|>1,|w|>1,|z+b/a|\le 1, |w+b/a|\le 1$ which yields that $$\frac{|a|^{2}\max\{1,|z|\}\max\{1,|w|\}}{\max\{|a^{2}z+ab|,1\}\max\{|a^{2}w+ab|,1\}}=|\frac{b}{a}|^{2}=|b|^{2}.$$ Thus $\rho_{v}(g(z),g(w))=L(g)\rho_{v}(z,w)$. If $z,w\in D_{2}$, then $|z|\le 1,|w|\le 1,|z+b/a|< 1, |w+b/a|< 1$ which yield that $$\frac{|a|^{2}\max\{1,|z|\}\max\{1,|w|\}}{\max\{|a^{2}z+ab|,1\}\max\{|a^{2}w+ab|,1\}}=\frac{1}{|z+b/a||w+b/a|}=|\frac{a}{b}|^{2}=|b|^{-2}\le L(g).$$ Thus $\rho_{v}(g(z),g(w))\le L(g)\rho_{v}(z,w)$. If $z\in D_{1}$ and $w\in D_{2}$, then $|z|>1,|w|\le 1,|z+b/a|\le 1, |w+b/a|> 1$ which yields that $$\frac{|a|^{2}\max\{1,|z|\}\max\{1,|w|\}}{\max\{|a^{2}z+ab|,1\}\max\{|a^{2}w+ab|,1\}}=\frac{|z|}{|w+b/a|}=1\le L(g).$$  If $z\notin D_{1}\cup D_{2}$, then $|z|>1,|z+b/a|> 1$ which yields that $$\frac{\max\{1,|z|\}}{\max\{|a^{2}z+ab|,1\}}=\frac{|z|}{|z+b/a|}=1.$$ Thus if $z,w\notin D_{1}\cup D_{2}$, then $\rho_{v}(g(z),g(w))=\rho_{v}(z,w)\le L(g)\rho_{v}(z,w)$. If $z\notin D_{1}\cup D_{2}, w\in D_{1}$, then $\rho_{v}(g(z),g(w))=\rho_{v}(z,w)|w|=|b|\rho_{v}(z,w)\le L(g)\rho_{v}(z,w)$. If  $z\notin D_{1}\cup D_{2}, w\in D_{2}$, then $\rho_{v}(g(z),g(w))=\rho_{v}(z,w)/|w+b/a|=|a/b|\rho_{v}(z,w)\le L(g)\rho_{v}(z,w)$. Therefore, $\rho_{v}(g(z),g(w))\le L(g)\rho_{v}(z,w)$.

When $|a|<1$, let $D_{1}=D(-\frac{b}{a},\frac{1}{|a|^{2}})$ and $D_{2}=D(0,1)$. If $D_{2}\subset D_{1}$, then $|\frac{b}{a}|\le \frac{1}{|a|^{2}}$ which yields that $L(g)=|d|^{2}=|a|^{-2}$.  If $z,w\in D_{1}$, then $$\frac{|a|^{2}\max\{1,|z|\}\max\{1,|w|\}}{\max\{|a^{2}z+ab|,1\}\max\{|a^{2}w+ab|,1\}}=|a|^{2}\max\{1,|z|\}\max\{1,|w|\}\le \frac{1}{|a|^{2}}=L(g),$$ namely $\rho_{v}(g(z),g(w))\le L(g)\rho_{v}(z,w)$. If $z,w\notin D_{1}$, then $$\frac{|a|^{2}\max\{1,|z|\}\max\{1,|w|\}}{\max\{|a^{2}z+ab|,1\}\max\{|a^{2}w+ab|,1\}}=\frac{|a|^{2}|z||w|}{|a|^{4}|z||w|}=|a|^{-2}$$ which yields that $\rho_{v}(g(z),g(w))=L(g)\rho_{v}(z,w)$. If $z\in D_{1}, w\notin D_{1}$, then $$\frac{|a|^{2}\max\{1,|z|\}\max\{1,|w|\}}{\max\{|a^{2}z+ab|,1\}\max\{|a^{2}w+ab|,1\}} =\frac{|a|^{2}|w|\max\{1,|z|\}}{|a|^{2}|w|}=\max\{1,|z|\}\le \frac{1}{|a|^{2}}=L(g).$$ Thus $\rho_{v}(g(z),g(w))\le L(g)\rho_{v}(z,w).$ If $D_{1}\cap D_{2}= \emptyset$, then $|\frac{b}{a}|>\frac{1}{|a|^{2}}$ which yields that $|b|>\frac{1}{|a|}$. Thus $L(g)=|b|^{2}$. If $z\in D_{2}$, then $$\frac{\max\{1,|z|\}}{\max\{1,|a^{2}z+ab|\}}\le 1.$$ If $z\in D_{1}$, then $$\frac{\max\{1,|z|\}}{\max\{1,|a^{2}z+ab|\}}=\frac{|b|}{|a|}.$$ If $z\notin D_{1}\cup D_{2}$, then $$\frac{\max\{1,|z|\}}{\max\{1,|a^{2}z+ab|\}}=\frac{|z|}{|a^{2}z+ab|}.$$ Thus if $z,w\in D_{1}$, then $$\frac{|a|^{2}\max\{1,|z|\}\max\{1,|w|\}}{\max\{1,|a^{2}z+ab|\}\max\{1,|a^{2}w+ab|\}}=|b|^{2}$$ which yields that $\rho_{v}(g(z),g(w))= L(g)\rho_{v}(z,w)$. If $z,w\in D_{2}$, then $$\frac{|a|^{2}\max\{1,|z|\}\max\{1,|w|\}}{\max\{1,|a^{2}z+ab|\}\max\{1,|a^{2}w+ab|\}}\le |a|^{2}\le 1\le L(g).$$ Thus $\rho_{v}(g(z),g(w))\le L(g)\rho_{v}(z,w)$. If $z_{1}\in D_{1},w\in D_{2}$, then $$\frac{|a|^{2}\max\{1,|z|\}\max\{1,|w|\}}{\max\{1,|a^{2}z+ab|\}\max\{1,|a^{2}w+ab|\}}\le |a||b|\le L(g).$$ Thus $\rho_{v}(g(z),g(w))\le L(g)\rho_{v}(z,w).$ If $z,w\notin D_{1}\cup D_{2}$, then $$\frac{|a|^{2}\max\{1,|z|\}\max\{1,|w|\}}{\max\{1,|a^{2}z+ab|\}\max\{1,|a^{2}w+ab|\}}=\frac{|a|^{2}|z||w|}{|a^{2}z+ab||a^{2}w+ab|}.$$ If $|z|=|z+\frac{b}{a}|$, then $\frac{|z|}{|z+b/a|}=1$. If $|z|<|z+b/a|$, then  $\frac{|z|}{|z+b/a|}<1$. If $|z|>|z+b/a|$, then $|z|=|b/a|$. Therefore, we have $$\frac{|a|^{2}|z||w|}{|a^{2}z+ab||a^{2}w+ab|}\le |b|^{2}.$$ Thus $\rho_{v}(g(z),g(w))\le L(g)\rho_{v}(z,w)$.

\end{proof}
\begin{rmk}
{\rm
In \cite{SJ}, the Lipschitz constant of the rational map with respect to the chordal metric is derived by the resultant(see definition below). Let $\mathcal{O}_{p}^{\ast}=\{\alpha\in \mathbb{C}_{p}: |\alpha|= 1\}$. Let $g(z)=a_{0}z^{n}+a_{1}z^{n-1}+...+a_{n-1}z+a_{n}$ and $h(z)=b_{0}z^{m}+b_{1}z^{n-1}+...+b_{m-1}z+b_{m}.$  The rational function $\phi(z)=\frac{g(z)}{h(z)}\in \mathbb{C}_{p}(z)$ is called a normalized form if all the coefficients of $g(z)$ and $h(z)$ are in $\mathcal{O}_{p}$ and at least one coefficient of $g(z)$ and $h(z)$ are in $\mathcal{O}_{p}^{\ast}$.
Let $\alpha_{i},1\le i\le n$ be the roots of $g(z)$, and $\beta_{i}, 1\le i\le m$ be the roots of $h(z)$. Let $\mathrm{Res}(g(z),h(z))=a_{0}^{n}b_{0}^{m}\prod_{i=1}^{n}\prod_{j=1}^{m}(\alpha_{i}-\beta_{j})$ be the resultant of two polynomials, and  $\mathrm{Res}(\phi)=\mathrm{Res}(g(z),h(z))$ be the resultant of the rational function $\phi$. The absolute value of the resultant $\mathrm{Res}(\phi)$  depends only on the map $\phi$.

\begin{lem}[\cite{SJ}]\label{yl:lip} Let $\phi:\mathbb{P}^{1}(\mathbb{C}_{p})\longrightarrow\mathbb{P}^{1}(\mathbb{C}_{p})$ be a rational map. Then $\rho_{v}(\phi(z),\phi(w))\le |\mathrm{Res(\phi)}|^{-2}\rho_{v}(z,w)$  for all $v,w\in \mathbb{P}^{1}(\mathbb{C}_{p}).$

\end{lem}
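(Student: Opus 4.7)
The plan is to pass to homogeneous coordinates, where both the chordal metric and the action of $\phi$ admit clean algebraic descriptions, and then to exploit the non-archimedean triangle inequality together with classical resultant identities.

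First I would write $\phi$ in a normalized homogeneous form $\phi([X:Y])=[g(X,Y):h(X,Y)]$ with $g,h\in\mathcal{O}_p[X,Y]$ homogeneous of degree $d=\deg\phi$, coprime, with at least one coefficient of $g$ or $h$ of absolute value $1$. For any $z\in\mathbb{P}^1(\mathbb{C}_p)$ I can choose homogeneous coordinates $[X:Y]$ with $\max(|X|,|Y|)=1$, in which case the chordal distance becomes
\begin{displaymath}
\rho_v([X_1{:}Y_1],[X_2{:}Y_2])=\frac{|X_1Y_2-X_2Y_1|}{\max(|X_1|,|Y_1|)\max(|X_2|,|Y_2|)}.
\end{displaymath}

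The key step is a lower bound on the denominator of $\rho_v(\phi(z),\phi(w))$. Using the classical Bezout-type identities for the resultant, there exist homogeneous polynomials $A_i,B_i\in\mathcal{O}_p[X,Y]$ such that $A_1 g+B_1 h=\mathrm{Res}(\phi)\cdot X^{2d-1}$ and $A_2 g+B_2 h=\mathrm{Res}(\phi)\cdot Y^{2d-1}$. Plugging in $(X,Y)$ with $\max(|X|,|Y|)=1$ and applying the strong triangle inequality gives
\begin{displaymath}
\max\bigl(|g(X,Y)|,|h(X,Y)|\bigr)\ge |\mathrm{Res}(\phi)|\cdot\max(|X|,|Y|)^{d}.
\end{displaymath}
This is the single point on which everything hinges: it quantifies how far $(g,h)$ can be from having a common zero.

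Next I would analyze the numerator. The homogeneous polynomial
\begin{displaymath}
N(X_1,Y_1,X_2,Y_2)=g(X_1,Y_1)h(X_2,Y_2)-g(X_2,Y_2)h(X_1,Y_1)
\end{displaymath}
vanishes whenever $X_1Y_2=X_2Y_1$, so it is divisible by $X_1Y_2-X_2Y_1$ in $\mathcal{O}_p[X_1,Y_1,X_2,Y_2]$, say $N=(X_1Y_2-X_2Y_1)\cdot Q$, with $Q$ bihomogeneous of bidegree $(d-1,d-1)$ and integral coefficients. Since the coefficients of $Q$ lie in $\mathcal{O}_p$, the non-archimedean inequality yields $|Q(X_1,Y_1,X_2,Y_2)|\le\max(|X_1|,|Y_1|)^{d-1}\max(|X_2|,|Y_2|)^{d-1}$.

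Finally I would combine these estimates. Writing the chordal distance of the images in homogeneous form and using the numerator bound on top and the resultant bound on the bottom, the factors $\max(|X_i|,|Y_i|)^{d-1}$ cancel against $\max(|X_i|,|Y_i|)^{d}$ up to the original $\max(|X_i|,|Y_i|)$ factors in $\rho_v(z,w)$, and two copies of $|\mathrm{Res}(\phi)|^{-1}$ survive, giving
\begin{displaymath}
\rho_v(\phi(z),\phi(w))\le|\mathrm{Res}(\phi)|^{-2}\rho_v(z,w).
\end{displaymath}
The main obstacle is the lower bound on $\max(|g|,|h|)$ in terms of the resultant; once that identity is in hand, the remaining steps are straightforward non-archimedean bookkeeping.
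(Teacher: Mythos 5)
Your proof is correct. The paper does not prove this lemma at all --- it is quoted from Silverman's \emph{The Arithmetic of Dynamical Systems} \cite{SJ} --- and your argument is essentially the standard one given there: normalize $\phi=[g:h]$ with $\mathcal{O}_p$-coefficients, use the B\'ezout-type identities $A_ig+B_ih=\mathrm{Res}(\phi)\cdot X^{2d-1}$ (resp.\ $Y^{2d-1}$) with $A_i,B_i\in\mathcal{O}_p[X,Y]$ of degree $d-1$ to get the lower bound $\max(|g|,|h|)\ge|\mathrm{Res}(\phi)|\max(|X|,|Y|)^d$, factor the numerator as $(X_1Y_2-X_2Y_1)\cdot Q$ with $Q$ having $\mathcal{O}_p$-coefficients, and combine.
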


Let $g(z)=\frac{az+b}{cz+d}\in \mathrm{PSL}(2,\mathbb{C}_{p})$. By Lemma \ref{yl:lip}, we have
$$\mathrm{Res}(g(z))=\mathrm{Res}((a/t)z+b/t, (c/t)z+d/t)=1/\parallel g\parallel^{2}=L(g),$$ where $t=\max\{|a|,|b|,|c|,|d|\}=\parallel g\parallel$.

In \cite{SJ}, it is shown that there are two points $x,y\in \mathbb{P}^{1}(\mathbb{C}_{p})$ such that $\displaystyle\mathop{\sup}_{x\neq y}\rho_{v}(\phi(x),\phi(y))/\rho_{v}(x,y)=|\mathrm{Res(\phi)}|^{-2}$. Since $\mathbb{P}^{1}(\mathbb{C}_{p})$ is not compact, the supreme can not be omitted in general cases. However,  we get the Lipschitz constant by other method and show that we can get the supreme when it is the $p$-adic M\"obius map.
}
\end{rmk}

Proof of Theorem \ref{thm:1}

\begin{proof} Since $\rho(g(\zeta_{Gauss}),\zeta_{Gauss})=2\log_{p}\parallel g\parallel$, $\parallel g\parallel=1$ is equivalent to $\rho(g(\zeta_{Gauss}),\zeta_{Gauss})=0$ which yields that $L(g)=1$. The converse is also true. This means that $(1)$, $(2)$, $(3)$ are equivalent.

$(2)\Rightarrow (4)$
If $L(g)=1$,  $L(g^{-1})=1$. Hence by Lemma \ref{lem:lip1}, we have $\rho_{v}(z,w)=\rho_{v}(g^{-1}(g(z)),g^{-1}(g(w)))\le L(g^{-1})\rho_{v}(g(z),g(w))\le L(g)\rho_{v}(z,w)$ which yields that $\rho_{v}(g(z),g(w))=\rho_{v}(z,w)$.

$(4)\Rightarrow (2)$
If $g$ is a chordal isometry, then $\rho_{v}(g(z),g(w))=\rho_{v}(z,w)$. By Lemma \ref{lem:lip1}, we have $L(g)=1$.

$(1)\Rightarrow (5)$
Since $\parallel g\parallel =1$, let $g=\frac{az+b}{cz+d}$, we have $\max\{|a|,|b|,|c|,|d|\}=1$ which yields that $g\in \mathrm{PSL}(2,\mathcal{O}_{p})$.

$(5)\Rightarrow (1)$
If $g\in \mathrm{PSL}(2,\mathcal{O}_{p})$, let $g=\frac{az+b}{cz+d}$, then $\max\{|a|,|b|,|c|,|d|\}\le 1$ and $ad-bc=1$. If $\max\{|a|,|b|,|c|,|d|\}< 1$, then $|ad-bc|<\max\{|ad|,|bc|\}<1$. This is a contradiction. Hence $\max\{|a|,|b|,|c|,|d|\}=1$ which yields that $\parallel g\parallel=1$.

$(1)\Rightarrow (6)$
Since $\parallel g\parallel=1$, $ \parallel g^{-1}\parallel=1$. Since $\parallel gh\parallel\le \parallel g\parallel \parallel h\parallel\le \parallel h\parallel$, and $\parallel h\parallel=\parallel g^{-1}gh\parallel\le \parallel g^{-1}\parallel \parallel gh\parallel\le \parallel gh\parallel$,  we have $\parallel gh\parallel=\parallel h\parallel$. Similarly, $\parallel hg\parallel=\parallel h\parallel$. We can rewrite $h$ as $hg^{-1}$, and then we have $\parallel ghg^{-1}\parallel=\parallel hg^{-1}\parallel=\parallel h\parallel$, since $g\in \mathrm{PSL}(2,\mathcal{O}_{p})$.

$(6)\Rightarrow (1)$
Let $h\in \mathrm{PSL}(2,\mathcal{O}_{p})$. Then $\parallel h\parallel=\parallel gh\parallel=\parallel g\parallel$ which yields that $\parallel g\parallel=1$.

\end{proof}

\newcommand{\DF}[2]{\displaystyle\frac{#1}{#2}}

The metric properties of $\mathrm{PSL}(2,\mathcal{O}_{p})$ can be used to study the $p-$adic continued fractions. An {\it infinite $p-$adic continued fraction } is a formal expression

\begin{displaymath}
\DF{a_{1}}{b_{1}+\DF{a_{2}}{b_{2}+\DF{a_{3}}{b_{3}+\cdots}}},
\end{displaymath}
where $a_{i}, b_{i}\in \mathbb{C}_{p}$ and $a_{i}\neq 0$. We denote this continued fraction by $\mathbb{K}(a_{n}|b_{n})$. Let $t_{n}=\frac{a_{n}}{z+b_{n}}$ and $T_{n}=t_{1}\circ t_{2}\circ t_{3}\circ \cdots$ for $n=1,2,3,\ldots$. The continued fraction is said to be convergent classically if the sequence $\{T_{n}\}$ converges, else it is said to diverge classically. In the following part, we study the simplest case $\mathbb{K}(1|b_{i})$ with $|b_{i}|\le 1$.  Since  $T_{n+1}(\infty)=T_{n}(0)$, by Theorem \ref{thm:1},  if the sequence $\{T_{n}(0)\}$ converges, then $0=\displaystyle\mathop{\lim}_{n\rightarrow \infty}\rho_{v}(T_{n}(0),T_{n}(\infty))=\rho_{v}(0,\infty)=1$. This is a contradiction. This implies that the continued fraction $\mathbb{K}(1|b_{i})$ with $|b_{i}|\le 1$ diverge classically.
\begin{ex}
 \begin{displaymath}
\DF{1}{1+\DF{1}{1+\DF{1}{1+\cdots}}}
\end{displaymath} does not converge classically.
\end{ex}
This example shows that the convergence and divergence of $p-$adic continued fractions are different from those in complex settings.
\section{Reduction and $p-$adic M\"obius maps }

We call an elliptic element $f\in \mathrm{PSL}(2,\mathbb{C}_{p})$ of order $2$  an {\bf involution}. If $g\in \mathrm{PSL}(2,\mathbb{C}_{p})$ is a loxodromic element or an elliptic element,  $g$ have two fixed points $a_{g}$ and $r_{g}$ in $\mathbb{P}^{1}(\mathbb{C}_{p})$. We call the geodesic line $A_{g}$ which connects $a_{g}$ and $r_{g}$ the {\bf axis} of $g$. Let $A$ be a geodesic line in $\mathbb{P}_{Ber}^{1}$. A geodesic line $B$ is {\bf orthogonal} to $A$ if there exists a $p$-adic M\"obius transformation $f$ which is an involution such that endpoints of $B$  are two fixed points of $f$, and $f$ interchanges endpoints of $A$.

We introduce a new conception,  {\bf a tailed geodesic line}, in order to analyze the geometrical characterization of $2-$adic M\"obius maps. Let $A$ be a geodesic line and $x$ be the point satisfying $\inf_{y\in A}{\rho(x,y)}=\log_{p}2$. Choosing any point $y\in A$, $A_{x}=A\cup[x,y]$. Since the Berkovich space is a $\mathbb{R}-$tree, it is obviously that $A_{x}$ is independent of the chosen point $y$, $A$ is the geodesic line associated with $A_{x}$, and $x$ is called {\bf a tail}. If $A$ is the axis of an involution $f$, then there exists a unique point $x$ fixed by $f$ such that $A_{x}$ is a tailed geodesic line. Hence we call $A_{x}$ the tailed axis of $f$.

\begin{lem}\label{thm:1.1} Let $g$ be a $p-$adic M\"obius map.  Then there exist two involution $f,h\in \mathrm{PSL}(2,\mathbb{C}_{p})$ such that $g=f\circ h$. Furhtermore

{\rm (1)} The axes $A$ of $h$ and $B$ of $f$  are orthogonal to the axis $A_{g}$ of $g$.

{\rm (2)} The endpoints of $A$ are different from the endpoints of $B$.

{\rm (3)} The element $g$ is  parabolic if and only if $A$ and $B$ share the unique endpoint.

When $p\ge 3$,

{\rm (4)} the element $g$ is elliptic if and only if  $A\cap B\neq\emptyset$;

{\rm (5)} the element $g$ is loxodromic if and only if  $A\cap B=\emptyset$.

When $p=2$,

{\rm (6)} the element $g$ is elliptic if and only if the two tailed geodesic lines $A_{x}\cap B_{y}\neq \emptyset$;

{\rm (7)} the element $g$ is loxodromic if and only if the two tailed axes $A_{x}\cap B_{y}=\emptyset$.

\end{lem}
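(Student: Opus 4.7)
The plan is to reduce $g$ to canonical form via Proposition~\ref{pro:conj}, exhibit an explicit decomposition for each form, and verify (1)--(7) directly on the canonical representatives. Conjugation by an element of $\mathrm{PSL}(2,\mathbb{C}_{p})$ is an isometry of $\mathbb{P}_{Ber}^{1}$ (Lemma~\ref{yl:iso}), preserves the type of $g$ (Proposition~\ref{pro:tr}), and carries axes, tails, orthogonality, and intersection patterns faithfully. I would take $\tilde g(z)=z+1$ in the parabolic case with $\tilde h(z)=-z$ and $\tilde f(z)=1-z$, and $\tilde g(z)=kz$ with $k\neq 1$ in the elliptic or loxodromic case with $\tilde h(z)=1/z$ and $\tilde f(z)=k/z$. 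A direct computation gives $\tilde h^{2}=\tilde f^{2}=I$ in $\mathrm{PSL}(2,\mathbb{C}_{p})$ and $\tilde f\circ\tilde h=\tilde g$, establishing the existence of the decomposition into involutions.

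The explicit fixed point data $F_{\tilde h}=\{0,\infty\}$, $F_{\tilde f}=\{1/2,\infty\}$ (parabolic) and $F_{\tilde h}=\{\pm 1\}$, $F_{\tilde f}=\{\pm\sqrt k\}$ (otherwise) yield (1) and (2) directly: in the non-parabolic case both $\tilde h$ and $\tilde f$ interchange $0\leftrightarrow\infty$, so their axes are orthogonal to $A_{\tilde g}$ in the sense of the theorem, and $k\neq 1$ makes $\{\pm 1\}$ and $\{\pm\sqrt k\}$ disjoint. Property (3) is the observation that only in the parabolic case do $A$ and $B$ share an endpoint in $\mathbb{P}^{1}(\mathbb{C}_{p})$, namely $\infty$, which is the unique fixed point of $\tilde g$. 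For (4) and (5) with $p\ge 3$, I would locate the apex of each axis as the Berkovich point corresponding to the smallest disk containing the two endpoints. Since $|2|=1$ the smallest disk containing $\{1,-1\}$ is $D(0,1)$, so the apex of $A$ is $\zeta_{Gauss}$. In the elliptic case $|k|=1$ combined with $(\sqrt k-1)(\sqrt k+1)=k-1$ and the strong triangle inequality forces $|\sqrt k\pm 1|\le 1$, so $\pm\sqrt k\in D(0,1)$, the apex of $B$ is again $\zeta_{Gauss}$, and $A\cap B\ni\zeta_{Gauss}$. In the loxodromic case $|k|>1$ puts the apex of $B$ at $\zeta_{D(0,\sqrt{|k|})}$, strictly above $\zeta_{Gauss}$ on the geodesic to $\infty$; every point of $A$ corresponds to a disk of radius $\le 1$ that cannot contain $\pm\sqrt k$, so $A\cap B=\emptyset$. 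The biconditionals in (3)--(5) then follow from the mutual exclusivity of the three cases once one direction of each is established.

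The main obstacle is the $p=2$ analysis in (6)--(7), where $|2|=1/2$ drops the apex of $A$ to $\zeta_{D(1,1/2)}$, at hyperbolic distance $\log_{p}2=1$ below $\zeta_{Gauss}$; since $\tilde h(z)=1/z$ has good reduction (Lemma~\ref{yl:gdguass}), $\zeta_{Gauss}$ is a fixed point of $\tilde h$ at exactly this distance, which makes it the tail $x$ of $A$. A parallel computation for $\tilde f(z)=k/z$ puts the apex of $B$ at $\zeta_{D(\sqrt k,|2\sqrt k|)}$ and identifies the tail $y$ as $\zeta_{Gauss}$ when $|k|=1$ (where $\tilde f\in\mathrm{PSL}(2,\mathcal O_{p})$) and as $\zeta_{D(0,\sqrt{|k|})}$ when $|k|>1$ (where $\tilde f$ has potentially good reduction via conjugation by $z\mapsto\sqrt k\,z$). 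The dichotomy of (6)--(7) then reproduces the $p\ge 3$ picture at the level of tailed axes: for elliptic $g$ the tails coincide at $\zeta_{Gauss}$ so $A_x\cap B_y\ni\zeta_{Gauss}$, while for loxodromic $g$ the stem of $B_y$ extends above $\zeta_{Gauss}$ to $\zeta_{D(0,\sqrt{|k|})}$ without meeting $A_x$, which is contained in the subtree at or below $\zeta_{Gauss}$. The technical subtlety is that $|\sqrt k|$ and $|2\sqrt k|$ can straddle $1$ as $|k|$ varies, so the argument requires a careful case analysis together with the distance formula $\rho(\zeta_{D(a,r)},\zeta_{D(b,s)})=2\log\max(r,s,|a-b|)-\log r-\log s$ to confirm the chosen fixed points sit at distance $\log_{p}2$ from the axes.
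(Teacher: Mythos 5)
Your proposal is correct and takes essentially the same route as the paper: conjugate $g$ to the normal forms $z\mapsto z+1$ or $z\mapsto kz$, decompose into the explicit involutions $z\mapsto 1/z$ and $z\mapsto k/z$ (the paper's $-b^{2}/z$ and $-\lambda^{2}b^{2}/z$ with $b^{2}=-1$), and read off the intersection pattern of the axes, resp.\ tailed axes for $p=2$, from the radii $|2|$, $|2\sqrt{k}|$ of the apex disks. The only differences are cosmetic (your fixed points are $\pm1,\pm\sqrt{k}$ rather than $\pm bk,\pm\lambda bk$), and your appeal to mutual exclusivity of the three types to get the converse implications matches the paper's implicit logic.
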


\begin{proof}

 If $g$ is loxodromic or elliptic, without loss of generality, let $g=\lambda^{2}z$. The axis $A_{g}$ is the geodesic line connecting $0$ and $\infty$. Let  $h(z)=-\frac{b^{2}}{z}$ and  $f(z)=g\circ h^{-1}(z)=-\frac{\lambda^{2}b^{2}}{z}$. Thus $h$ has two fixed points $bk,-bk$ and $f$ has two fixed points $\lambda bk, -\lambda bk$, where $k^{2}=-1$. It is easy to see that $f$ and $h$ interchange $0$ and $\infty$, and $f\circ f=h\circ h=z$, namely $f$ and $h$ are two involutions. Hence $A,B$ are orthogonal to $A_{g}$ respectively. We prove {\rm (1), (2)}.

In case of $p\ge 3$, if $A\cap B\neq \emptyset$, since the $\mathbb{P}_{Ber}^{1}$ is a $\mathbb{R}-$tree, there exists a point $x\in A\cap B$ corresponding the disc $D$, which contains  endpoints $bk, -bk, \lambda bk, -\lambda bk$ and $|2bk|=|2\lambda bk|$ which implies that $|\lambda|=1$, i.e. $g$ is elliptic element.  If $A\cap B=\emptyset$, either $bk,-bk\notin D(\lambda bk,|\lambda bk|)$ or $\lambda bk,-\lambda bk\notin D(bk,|bk|)$ which implies that $|\lambda bk-bk|>|bk|$ or $|\lambda bk-bk|>|\lambda bk|$, i.e. $|\lambda-1|>1$ or $|1-\frac{1}{\lambda}|>1$. Hence $g$ is loxodromic. We prove {\rm (4),(5)}.

In case of $p=2$, if the tailed axes $A_{x}\cap B_{y}=\emptyset$, we denote the tails of the tailed axes by $D(bk,|bk|)$ and $D(\lambda bk, |\lambda bk|)$, the two tailed axes $A_{x}$ and $B_{y}$ do not intersect, since $D(bk, |bk|)^{-}\cap D(\lambda bk, |\lambda bk|)^{-}=\emptyset$, and $D(\lambda bk, |\lambda bk|)\neq D(\lambda bk, |\lambda bk|)$. Conversely, if two tailed axes do not intersect, then $|\lambda bk|\neq |bk|$, namely $|\lambda|\neq 1$. This implies that $g$ is a loxodromic element. We prove {\rm (6),(7)}

If $g$ is parabolic,  without loss of generality, we can assume that $g(z)=z+1$. Let $f(z)=-z$ and $h(z)=-z-1$. Then $g(z)=f\circ h(z)=-(-z-1)=z+1$. Since the axis $A$ of $f$ is the geodesic line connecting $\{0,\infty\}$ and the axis $B$ of $h$ is the geodesic line connecting $\{-\frac{1}{2},\infty\}$,  we have $\infty\in A\cap B$.

Conversely, if $A$ and $B$ share only one endpoint, without loss of generality, we can assume that $A$ is the geodesic line connecting $\{0,\infty\}$, and $B$ is the geodesic line connecting $\{0,1\}$. Then $f(z)=-z$ and $h(z)=\frac{z}{2z-1}$ which implies that $g(z)=f\circ h(z)=-\frac{z}{2z-1}$ is a parabolic element, since $g$ has a unique fixed point in $\mathbb{P}^{1}(\mathbb{C}_{p})$. We know that if $A$ and $B$ have different endpoints, then $g$ is either loxodromic or elliptic.

\end{proof}

By the proof of Lemma \ref{thm:1.1}, we know that for any $p$-adic M\"obius map $g$, there exist two half turns $f$ and $h$ such that $g=fh$. Furthermore, following the proof, since $f$ and $h$ are not unique, we can make the axis of $f$ contain the Gauss point $\zeta_{Gauss}$.

We denote the set of fixed points of an element $g$ by $F_{g}=\{x\in \mathbb{P}_{Ber}^{1}|g(x)=x\}$.  Let $a,b\in \mathbb{P}^{1}(\mathbb{C}_{p})$, and $x\in \mathbb{H}_{Ber}$. 

\begin{pro}Let $g\in \mathrm{PSL}(2,\mathbb{C}_{p})$.

$ (1)$ If  $g$ is a loxodromic element, then the set of fixed points of $g$ contains two points in $\mathbb{P}^{1}(\mathbb{C}_{p})$.

$(2)$  If $g$ is a tame elliptic element, then the set of fixed points of $g$ is a geodesic line in $\mathbb{P}^{1}_{Bek}$, and $F_{g}\cap \mathbb{P}^{1}(\mathbb{C}_{p})$ contains two points.

$(3)$  Let $g$ be a wild elliptic element. Then the interior of the set of the fixed points of $g$ contains a geodesic line in $\mathbb{P}_{Ber}^{1}$.

$(4)$  If $g$ is a parabolic element, then the fixed points of $g$ is an open disc with its boundary with respect to the Berkovich topology.

\end{pro}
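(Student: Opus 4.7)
The plan is to handle each case by first conjugating $g$ to the canonical form supplied by Proposition~\ref{pro:conj}, and then reading off $F_g$ by analyzing the action type-by-type on Berkovich points. Since every element of $\mathrm{PSL}(2,\mathbb{C}_p)$ acts as an automorphism of $\mathbb{P}^1_{Ber}$, conjugation is a homeomorphism that preserves both the type of a point (Lemma~\ref{yl:type}) and the property of being a fixed point, so it suffices to analyze each canonical form. The central tool is Lemma~\ref{yl:image}, together with the identification of a type II or III point with a closed disc $D(a,r)$: such a point is fixed by $g$ precisely when $g(D(a,r))=D(a,r)$ as sets.

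For (1), I take $g(z)=kz$ with $|k|>1$. The type I fixed points are $\{0,\infty\}$. For $D(a,r)$, one has $g(D(a,r))=D(ka,|k|r)$, and equality with $D(a,r)$ forces $|k|r=r$, impossible for $r>0$. For (2), taking $g(z)=kz$ with $|k|=|k-1|=1$, the type I fixed points are again $\{0,\infty\}$, and $D(ka,r)=D(a,r)$ reduces to $|a(k-1)|\le r$, i.e.\ $|a|\le r$, whence $D(a,r)=D(0,r)$. Thus $F_g=\{0,\infty\}\cup\{D(0,r):r>0\}$, the geodesic line from $0$ to $\infty$ in $\mathbb{P}^1_{Ber}$; in particular $F_g\cap\mathbb{P}^1(\mathbb{C}_p)=\{0,\infty\}$ is a two-point set.

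For (3), take $g(z)=kz$ with $|k-1|<1$. The analogous calculation gives fixed discs $D(a,r)$ precisely when $|a|\le r/|k-1|$, and since $|k-1|^{-1}>1$ the strict inequality $|a|<r/|k-1|$ cuts out a Berkovich-open set $U\subset F_g$ strictly containing the axis $\{D(0,r):r>0\}$; hence $\Int(F_g)\supseteq U$ contains a geodesic line. For (4), take $g(z)=z+1$: the only type I fixed point is $\infty$, and $D(a+1,r)=D(a,r)$ iff $|1|\le r$, i.e.\ $r\ge 1$. Hence $F_g=\{\infty\}\cup\{D(a,r):r\ge 1\}$, which is a closed Berkovich disc: the open piece given by the strict inequality $r>1$ (together with $\infty$) plus the topological boundary point $\zeta_{Gauss}=D(0,1)$ lying at $r=1$.

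The principal obstacle is the clean handling of type IV points and the verification of the topological assertions in (3) and (4). For type IV, a point defined by a nested sequence $\{D(a_n,r_n)\}$ with empty intersection is fixed by $g$ exactly when $\{g(D(a_n,r_n))\}$ and $\{D(a_n,r_n)\}$ are cofinal in one another; in (1) this is incompatible with $|k|>1$, in (2) it confines the point to the axis, and in (3), (4) it adds no new fixed points outside those already described. The openness of the strict inequalities $|a|<r/|k-1|$ and $r>1$ in the Berkovich topology follows from its definition as the weakest topology making the evaluations $x\mapsto[f]_x$ continuous, and the boundary identification in (4) follows from the tree structure on $\mathbb{P}^1_{Ber}$.
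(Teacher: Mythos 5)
Your proof follows essentially the same route as the paper's: conjugate $g$ to the canonical form $z\mapsto kz$ (resp.\ $z\mapsto z+1$) and determine the fixed points in $\mathbb{H}_{Ber}$ by solving $D(ka,|k|r)=D(a,r)$ (resp.\ $D(a+1,r)=D(a,r)$), arriving at exactly the same conditions $|a|\le r$, $|a|\le r/|k-1|$, and $r\ge 1$ that the paper derives. Your write-up is in fact slightly more complete, since you also address type IV points and the openness/boundary assertions in (3) and (4), which the paper leaves implicit.
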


\begin{proof} If $g$ is loxodromic or elliptic, we can assume that $g(z)=\lambda^{2}z$ with fixed points $0,\infty$.  If $|\lambda|>1$ or $|\lambda |<1$, then by Lemma \ref{yl:main}, we know that $g$ can not fix any point in $\mathbb{H}_{Ber}$. This proves {\rm (1)}. If $|\lambda|=1$, then $g\in \mathrm{PSL}(2,\mathcal{O}_{p})$ which implies that $g$ fixes every point on the geodesic line connecting $0,\infty$.  Furthermore, in case of $|\lambda-1|=|\lambda+1|=1$, for any $x\in \mathbb{H}_{Ber}$ corresponding to the disc $D(a,r)$, we have $g(D(a,r))=D(\lambda^{2} a, |\lambda^{2}|r)=D(\lambda^{2} a, |\lambda^{2}|r)$, if $gx=x$. Then $|\lambda^{2}-1||a|\le r$ which implies that $|a|\le r$, i.e. $0\in D(a,r)$. Hence $x$ is on the geodesic line connecting $0$ and $\infty$.
This proves {\rm (2)}.  When $|\lambda^{2}-1|<1$, for any $x\in \mathbb{H}_{Ber}$ corresponding to the disc $D(a,r)$, we have $g(D(a,r))=D(\lambda^{2} a, |\lambda^{2}|r)=D(\lambda^{2} a, |\lambda^{2}|r)$, if $gx=x$.  This implies that $|a|\le \frac{r}{|\lambda^{2}-1|}$. Hence $g$ fixes any point in the hyperbolic disc $\mathcal{B}(x,\frac{1}{|\lambda^{2}-1|})$, i.e. the interior of the set of the fixed points of $g$ contains a geodesic line in $\mathbb{P}_{Ber}^{1}$. This proves {\rm (3)}.

If $g$ is parabolic,  we can assume that $g=z+1$.  For any $x\in \mathbb{H}_{Ber}$ corresponding to the disc $D(a,r)$, if $g(x)=x$, then $D(a,r)=D(a+1,r)$ which implies that $r\ge 1$.¡¡This proves {\rm (4)}.

\end{proof}


\begin{lem} If a geodesic line $A$ is orthogonal to the other geodesic line $B$, then $B$ is also orthogonal to $A$.

\end{lem}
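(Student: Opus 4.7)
The plan is to exhibit an explicit involution witnessing that $A$ is orthogonal to $B$, obtained by a normalizing conjugation. Let $a_1,a_2\in\mathbb{P}^1(\mathbb{C}_p)$ denote the endpoints of $A$ and $b_1,b_2\in\mathbb{P}^1(\mathbb{C}_p)$ the endpoints of $B$. By the hypothesis that $A$ is orthogonal to $B$, there is an involution $f\in\mathrm{PSL}(2,\mathbb{C}_p)$ with fixed point set $\{b_1,b_2\}$ satisfying $f(a_1)=a_2$ and $f(a_2)=a_1$. My goal is to produce an involution $g\in\mathrm{PSL}(2,\mathbb{C}_p)$ whose fixed point set is $\{a_1,a_2\}$ and which interchanges $b_1$ and $b_2$.

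The first step is to conjugate into a convenient normal form. Choose a $p$-adic M\"obius map $\phi$ with $\phi(a_1)=0$ and $\phi(a_2)=\infty$, and set $\tilde f=\phi f\phi^{-1}$. Then $\tilde f$ is an involution that interchanges $0$ and $\infty$. Lifting to $\mathrm{SL}(2,\mathbb{C}_p)$, any matrix sending $0\mapsto\infty$ and $\infty\mapsto 0$ has vanishing diagonal entries, so $\tilde f(z)=c/z$ for some $c\in\mathbb{C}_p^*$. Its fixed points solve $z^2=c$, hence are of the form $\pm\sqrt{c}$. Since $\tilde f$ fixes $\phi(b_1)$ and $\phi(b_2)$, we conclude $\phi(b_1)=-\phi(b_2)$.

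Now define $\tilde g(z)=-z$. This is an involution in $\mathrm{PSL}(2,\mathbb{C}_p)$ whose fixed points are precisely $0$ and $\infty$, and it interchanges $\phi(b_1)=\sqrt{c}$ with $\phi(b_2)=-\sqrt{c}$. Set $g=\phi^{-1}\tilde g\phi$; then $g$ is an involution with fixed point set $\{a_1,a_2\}$ which swaps $b_1$ and $b_2$. By the definition of orthogonality, this exhibits $A$ as orthogonal to $B$, completing the proof.

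The argument is essentially a symmetry observation once put in standard coordinates, so there is no serious obstacle. The only point requiring care is justifying the normal form $\tilde f(z)=c/z$ for involutions exchanging $0$ and $\infty$, which is handled by a direct $\mathrm{SL}(2,\mathbb{C}_p)$ lift; this works uniformly in $p$ (including $p=2$) because characteristic zero guarantees $-1\neq 1$ in $\mathbb{C}_p$, so $\tilde g(z)=-z$ is a genuine involution.
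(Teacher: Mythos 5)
Your proof is correct. It follows the same basic strategy as the paper---normalize by a conjugation and exhibit an explicit witness---but the details differ in a way worth noting. The paper simply asserts ``without loss of generality'' that $A$ has endpoints $0,\infty$ and $B$ has endpoints $\pm 1$, and then writes down $f(z)=\frac{z+1}{z-1}$, an involution that interchanges the two geodesics as sets; the step showing that orthogonality forces the endpoints of $B$ into the antipodal position $\pm 1$ is left implicit, as is the passage from ``a map swapping $A$ and $B$'' to the involution demanded by the definition. You supply exactly the missing justification: the lift argument showing an involution exchanging $0$ and $\infty$ must be $z\mapsto c/z$, hence has fixed points $\pm\sqrt{c}$, which pins down the endpoints of the other geodesic; and you then produce directly the involution $z\mapsto -z$ that fixes the endpoints of one line and swaps those of the other, which is precisely what the definition of orthogonality requires. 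Your remark that the argument is uniform in $p$ (including $p=2$) is also apt, since the paper's surrounding lemmas treat $p=2$ separately for other statements. In short, your version is a more complete and more careful rendering of the paper's one-line proof.
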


\begin{proof} Without loss of generality, we can assume that $A$ is a geodesic line with endpoints $0$ and $\infty$ and $B$ is a geodesic line with endpoints $-1$ and $1$. Then $f(A)=B,f(B)=A$, if $f(z)=\frac{z+1}{z-1}$.
\end{proof}

\begin{lem}\label{yl:ounique} Let a geodesic line $A$ be orthogonal to the other geodesic line $B$.

$(1)$ If $p\ge 3$, then $A$ intersects $B$ at one unique point.

$(2)$ If $p=2$, then $A\cap B=\emptyset$.

\end{lem}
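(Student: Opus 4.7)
The plan is to conjugate by a M\"obius transformation so that $A$ becomes the Berkovich geodesic joining $0$ and $\infty$; since conjugation by elements of $\mathrm{PSL}(2,\mathbb{C}_{p})$ preserves the tree structure of $\mathbb{P}^{1}_{Ber}$ (Lemma~\ref{yl:iso}) and hence the relation of orthogonality, this is harmless. Under this normalisation the involution $f$ witnessing orthogonality must interchange $0$ and $\infty$, so $f(z)=c/z$ for some $c\in\mathbb{C}_{p}^{\ast}$; solving $z=c/z$ yields fixed points $\pm\alpha$ with $\alpha^{2}=c$, so $B$ is the Berkovich geodesic from $\alpha$ to $-\alpha$.

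Next I would describe both geodesics concretely inside $\mathbb{P}_{Ber}^{1}$. The geodesic $A$ consists of the two type~I endpoints together with the points identified with the discs $D(0,r)$, $r>0$. The geodesic $B$ splits at its apex $\alpha\vee(-\alpha)$, and the crucial preliminary step is to identify this apex explicitly, via the strong triangle inequality, as the type~II point corresponding to the disc $D(\alpha,|2\alpha|)$, the smallest closed disc containing both $\alpha$ and $-\alpha$. Below the apex, $B$ consists of discs $D(\alpha,s)$ and $D(-\alpha,s)$ with $0<s<|2\alpha|$. To compute $A\cap B$, I would invoke the elementary non-Archimedean identity that $D(0,r)=D(\alpha,r)$ if and only if $|\alpha|\le r$: this forces $A\cap B$ to be precisely the set of discs $D(0,s)=D(\alpha,s)$ with $|\alpha|\le s\le|2\alpha|$.

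Everything is then controlled by the value of $|2|_{p}$. When $p\ge 3$, $|2|_{p}=1$, so $|2\alpha|=|\alpha|$, the admissible range collapses to the single value $s=|\alpha|$, and the intersection consists of one point, namely the apex $D(0,|\alpha|)=D(\alpha,|\alpha|)$, proving (1). When $p=2$, $|2|_{2}=1/2$, so $|2\alpha|<|\alpha|$, the admissible range is empty, and $A\cap B=\emptyset$, proving (2).

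The one genuinely delicate point is pinning down the apex of $B$ as $D(\alpha,|2\alpha|)$ and then recognising that the whole dichotomy is forced by the different behaviour of $|2|_{p}$ when $p=2$ versus $p\ge 3$. Once this is in place, the remainder is a direct verification using only the strong triangle inequality and the description of Berkovich geodesics as paths through nested discs.
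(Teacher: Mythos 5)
Your proposal is correct and follows essentially the same route as the paper: normalise $A$ to the geodesic joining $0$ and $\infty$, observe that the orthogonality involution forces $B$ to have endpoints $\pm\alpha$, locate the apex of $B$ at the disc $D(\alpha,|2\alpha|)$, and let the dichotomy between $|2|_{p}=1$ for $p\ge 3$ and $|2|_{2}=\tfrac12$ decide both cases. Your explicit computation of $A\cap B$ as the discs $D(0,s)$ with $|\alpha|\le s\le|2\alpha|$ is a slightly cleaner packaging of the paper's separate uniqueness and emptiness arguments, but the underlying idea is identical.
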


\begin{proof}Without loss of generality, we may assume that $A$ is the geodesic connecting $0$ and $\infty$ and  $B$ is a geodesic line with endpoints  $-\alpha,\alpha$. Let the point $x$ correspond to the disc $D(\alpha, |2\alpha|)$ which lies on the geodesic line $B$ and contains the points $-\alpha,\alpha$.

If $p\ge 3$, then $|2\alpha|=|\alpha|$ which implies that $D(\alpha, |2\alpha|)$ contains $0$. Hence $B$ intersects $A$. Conversely, if $B$ intersects $A$, then there exists a  point $x$ corresponding to a disc $D(0,r)$ containing $\alpha$ or $-\alpha$ which implies that $|\alpha|\le r$. Hence $D(0,r)$ contains both $\alpha$ and $-\alpha$. If there exist two points $x_{1},x_{2}\in A\cap B$ corresponding to two discs $D(0,r_{1})$ and $D(0,r_{2})$ respectively, then either $D(0,r_{1})\subset D(0,r_{2})$ or $D(0,r_{2})\subset D(0,r_{1})$. Without loss of generality, we can assume that $D(0,r_{1})\subset D(0,r_{2})$. Let $l_{1}$ be the geodesic segment connecting $\alpha$ and $x_{2}$, and $l_{2}$ be the geodesic segment connecting $-\alpha$ and $x_{2}$. Hence $ l_{1}\cup l_{2}\subset B$, but $l_{1}\cap l_{2}$ contains a segment containing $x_{1}$ and $x_{2}$. This is a contradiction. Hence $A$ intersects $B$ at a uniquely point.

If $p=2$, then for any $x$ lying on the geodesic line connecting $\alpha,-\alpha$, we have that the disc corresponding to $x$ must contain $\alpha$ or $-\alpha$. Without loss of generality, let $x$ correspond to the disc $D(\alpha, r)$. If $x$ lies on $A$, then $D(\alpha,r)$ contains $0$ which implies that $|\alpha|\le r$. Hence $-\alpha \in D(0,r)$. Since $\zeta$ corresponds to the disc $D(\alpha,|2\alpha|)$ containing both $\alpha$ and $-\alpha$,  the geodesic line contains the segment which connecting $\zeta$ and $x$. This is a contradiction. Hence $A\cap B=\emptyset$.

\end{proof}

We say that $g$ keeps a set $A$ invariant if $g(A)=g^{-1}(A)=A$.

\begin{lem} Let $A_{g}$ be the axis of $g$.  If $g$ is a loxodromic element or an elliptic element, then $g$ keeps the axis $A_{g}$ invariant. Furthermore, $g$ fixes every point of the axis $A_{g}$ if and only if $g$ is an elliptic element.
\end{lem}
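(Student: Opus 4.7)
The plan is to reduce to the normalized form by conjugation and then verify both assertions by direct computation on Berkovich discs. Since every element of $\mathrm{PSL}(2,\mathbb{C}_p)$ acts by isometries on $\mathbb{H}_{Ber}$ (Lemma \ref{yl:iso}), conjugation sends axes to axes and preserves the property of fixing a point. By Proposition \ref{pro:conj}, after conjugation I may assume $g(z)=\lambda^{2}z$ with fixed points $a_{g}=0$ and $r_{g}=\infty$, where $|\lambda|=1$ when $g$ is elliptic and $|\lambda|>1$ when $g$ is loxodromic. In this normalization the axis $A_{g}$ is the arc $[0,\infty]$ in $\mathbb{P}^{1}_{Ber}$, whose interior consists precisely of the type II and type III points corresponding to the discs $D(0,r)$ with $0<r<\infty$.

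Next, I would show invariance. A direct computation (or an application of Lemma \ref{yl:image}) gives $g(D(0,r))=D(0,|\lambda|^{2}r)$, which is again a disc centered at $0$, hence again a point of $A_{g}$; this yields $g(A_{g})\subset A_{g}$. Applying the same computation to $g^{-1}(z)=\lambda^{-2}z$ gives the opposite inclusion, so $g(A_{g})=A_{g}$. Transferring back through the conjugation establishes invariance of the axis for the original $g$.

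For the second assertion I would examine when $g$ fixes an interior point of $A_{g}$. The point of $A_{g}$ corresponding to $D(0,r)$ is fixed by $g$ if and only if $D(0,|\lambda|^{2}r)=D(0,r)$, i.e.\ if and only if $|\lambda|=1$; by Proposition \ref{pro:tr} this is exactly the elliptic case, and when it occurs every interior point of $A_{g}$ is fixed. If instead $|\lambda|>1$ (the loxodromic case), then $g$ shifts every interior point of the axis by hyperbolic distance $\rho(D(0,r),D(0,|\lambda|^{2}r))=2\log_{p}|\lambda|>0$ toward the attracting endpoint $\infty$, so no interior point is fixed, while the two endpoints $0,\infty$ lie in $\mathbb{P}^{1}(\mathbb{C}_{p})$ rather than in $\mathbb{H}_{Ber}$.

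There is no real obstacle here; the only point requiring minor care is the justification that the axis in normalized form is exactly the collection of discs centered at $0$, together with the two endpoints. This follows from the uniqueness of arcs in the $\mathbb{R}$-tree $\mathbb{P}^{1}_{Ber}$ and from the fact that conjugation is a homeomorphism carrying $a_{g},r_{g}$ to $0,\infty$, so it must carry $A_{g}$ to $[0,\infty]$ bijectively, after which the explicit formula for $g$ does all the work.
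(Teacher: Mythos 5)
Your proposal is correct and follows essentially the same route as the paper: conjugate to the diagonal normal form $z\mapsto\lambda z$ (or $\lambda^{2}z$), identify the axis with the discs $D(0,r)$, and observe that $g$ sends $D(0,r)$ to $D(0,|\lambda|r)$, which stays on the axis and equals $D(0,r)$ exactly when $|\lambda|=1$. Your write-up is in fact somewhat more complete than the paper's, since you also record the opposite inclusion via $g^{-1}$ and the positive hyperbolic translation length in the loxodromic case.
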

\begin{proof} Without loss of generality, let $g=\lambda z$. Hence $A_{g}$ is the geodesic line connecting $0,\infty$. If $|\lambda|>1$, then $g$ maps each disk $D(0,r)$ to $D(0,|\lambda|r)$ which is also on the geodesic line. If $|\lambda|=1$, then $g$ maps each disk $D(0,r)$ to $D(0,|\lambda|r)$ which is the disk $D(0,r)$, namely $g$ fixes the point $\zeta_{0,r}$.

\end{proof}

\begin{lem}\label{yl:unique} If $p\ge 3$, and $g=h\circ f$ is a tame elliptic element, where $h,f$ are two involutions, then two axes of $h$ and $f$ only intersect at a unique point.
\end{lem}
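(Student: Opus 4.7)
The plan is to reduce the claim to an elementary disc-distance calculation in the Berkovich tree. Conjugation in $\mathrm{PSL}(2,\mathbb{C}_{p})$ preserves axes, their intersections, and the tame elliptic type (via Proposition~\ref{pro:tr}), so I may assume $g(z)=\lambda^{2}z$ with $|\lambda|=1$; then the axis $A_{g}$ is the geodesic from $0$ to $\infty$. By part (1) of Lemma~\ref{thm:1.1}, the axis $A$ of $h$ and the axis $B$ of $f$ are both orthogonal to $A_{g}$, which by the definition of orthogonality forces each of $h$ and $f$ to interchange $\{0,\infty\}$. A direct computation shows that any involution in $\mathrm{PSL}(2,\mathbb{C}_{p})$ interchanging $0$ and $\infty$ has the form $z\mapsto -b^{2}/z$, so $h(z)=-b^{2}/z$ for some $b\in\mathbb{C}_{p}^{*}$, and the identity $f=h^{-1}\circ g=h\circ g$ (since $h$ is an involution) gives $f(z)=-b^{2}/(\lambda^{2}z)$. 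Hence the two fixed-point sets are $\{\pm bk\}$ and $\{\pm bk/\lambda\}$, where $k\in\mathbb{C}_{p}$ satisfies $k^{2}=-1$.

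Next I extract the arithmetic content of the tame hypothesis. Choosing the $\mathrm{PSL}(2)$-lift appropriately, tameness of $g$ is the lift-invariant statement $|\lambda^{2}-1|=1$; since $|\lambda-1|$ and $|\lambda+1|$ are each at most $1$ by the strong triangle inequality (using $|\lambda|=1$), their product being one forces $|\lambda-1|=|\lambda+1|=1$. Together with $p\ge 3$, which gives $|2bk|=|bk|$, a short computation shows that every pair among the four fixed points $\{bk,\,-bk,\,bk/\lambda,\,-bk/\lambda\}$ has distance exactly $|bk|$ in $\mathbb{C}_{p}$. Let $\zeta_{0}$ denote the type II Berkovich point corresponding to the disc $D(0,|bk|)$; this disc is the smallest one containing any two of the four fixed points, so $\zeta_{0}$ is the top vertex of both $A$ and $B$, and in particular $\zeta_{0}\in A\cap B$. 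For uniqueness, suppose $\zeta\in A\cap B$ with $\zeta\neq\zeta_{0}$; then $\zeta$ corresponds to a disc of radius strictly less than $|bk|$ which must contain one of $\pm bk$ and one of $\pm bk/\lambda$, but no disc of radius less than $|bk|$ contains two of the four points, a contradiction.

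The only real subtlety is the normalization of the $\mathrm{PSL}(2)$-lift when unpacking the tame elliptic hypothesis: the eigenvalues are defined only up to a common sign, and tameness must be read as the lift-independent statement $|\lambda^{2}-1|=1$, from which the two-factor equality $|\lambda-1|=|\lambda+1|=1$ is then forced in the $p\ge 3$ setting. Once this normalization is in place the proof is essentially a restatement of the elliptic analysis in Lemma~\ref{thm:1.1}: in the ultrametric Berkovich tree, the intersection of two geodesic segments can be read off directly from the pairwise distances of their four endpoints.
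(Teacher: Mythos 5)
Your proof is correct and follows essentially the same route as the paper's: normalize $g(z)=\lambda^{2}z$, write the two involutions as $z\mapsto -b^{2}/z$ and $z\mapsto -\lambda^{2}b^{2}/z$ (equivalently $-b^{2}/(\lambda^{2}z)$), and use tameness plus $p\ge 3$ to see that all four fixed points are at mutual distance $|b|$, so the axes meet only at $\zeta_{0,|b|}$. You supply the uniqueness step (no disc of radius less than $|b|$ contains a fixed point of each involution) explicitly, which the paper merely asserts.
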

\begin{proof} Without loss of generality, we can assume that $g(z)=\lambda^{2}(z)$. Thus $f(z)=-\frac{b^{2}}{z}$ and $h(z)=-\frac{\lambda^{2}b^{2}}{z}$. By Lemma \ref{yl:1}, we have $|\lambda b-b|=|\lambda-1||b|=|b|$. This implies that two axes of $h$ and $f$ only intersect at the point $\zeta_{0,|b|}$ which corresponds to the disc $D(0,|b|)$.
\end{proof}

\begin{lem}\label{lem:p3wildell}If $p\ge 3$, and $g=h\circ f$ is a wild elliptic element, where $h,f$ are two involutions, then two axes of $h$ and $f$ only intersect on a segment, and this segment belongs to the fixed points of $g$.

\end{lem}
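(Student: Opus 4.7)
The plan is to mirror the constructive proof of Lemma~\ref{thm:1.1}. After conjugation I normalise so that $g(z)=\lambda^{2}z$ and, exactly as in that proof, take $f(z)=-b^{2}/z$ and $h(z)=-\lambda^{2}b^{2}/z$ for some $b\in\mathbb{C}_{p}^{\ast}$; the fixed points of $f$ and $h$ are then $\pm bk$ and $\pm\lambda bk$ with $k^{2}=-1$. Since $g$ is wild elliptic and $p\ge 3$, I can choose the sign of the eigenvalue so that $|1-\lambda|<1$, and then $|\lambda|=|k|=1$; using $|2|=1$ in $\mathbb{C}_{p}$ for odd $p$, the strong triangle inequality yields $|1+\lambda|=1$ and hence $|1-\lambda^{2}|=|1-\lambda|$.

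Next I describe the axes as paths of type~II points. The axis $A$ of $f$ passes through the points $\zeta_{bk,r}$ and $\zeta_{-bk,r}$ for $r\in[0,|b|]$, with common peak $\zeta_{0,|b|}=\zeta_{bk,|b|}=\zeta_{-bk,|b|}$; the axis $B$ of $h$ is described analogously with peak $\zeta_{0,|\lambda b|}=\zeta_{0,|b|}$, so $A$ and $B$ share this peak. To compute $A\cap B$ below the peak I compare the possible pairs of branches: $D(bk,r)=D(\lambda bk,r)$ iff $|1-\lambda||b|\le r$ (and the same inequality handles the ``minus'' sides), while $D(bk,r)=D(-\lambda bk,r)$ iff $|1+\lambda||b|=|b|\le r$, which can only occur at the peak itself. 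This identifies $A\cap B$ as the geodesic segment from $\zeta_{bk,|1-\lambda||b|}$ through the peak $\zeta_{0,|b|}$ to $\zeta_{-bk,|1-\lambda||b|}$, which is nondegenerate because $|1-\lambda|<1$.

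Finally I verify the fixed-point containment by direct calculation: for any $\zeta_{a,r}\in\mathbb{H}_{Ber}$, we have $g(\zeta_{a,r})=\zeta_{\lambda^{2}a,r}$ since $|\lambda^{2}|=1$, and this equals $\zeta_{a,r}$ exactly when $|1-\lambda^{2}||a|=|1-\lambda||a|\le r$. On the segment identified above every point either has $a=0$ (the peak, trivially fixed) or satisfies $|a|=|bk|=|b|$ together with $r\ge|1-\lambda||b|$, so the condition holds throughout. The main obstacle I anticipate is to show that $A\cap B$ is \emph{no larger} than the claimed segment: this is precisely where the hypothesis $p\ge 3$ is essential, since the equality $|1+\lambda|=1$ (which fails when $p=2$) is exactly what prevents the ``opposite'' branches of $A$ and $B$ from merging, thereby forcing $A\cap B$ to coincide with the single geodesic segment that lies in $F_{g}$.
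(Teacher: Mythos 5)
Your proof is correct and follows essentially the same route as the paper: normalise $g$ to $z\mapsto\lambda^{2}z$, use the explicit involutions from Lemma~\ref{thm:1.1}, and compute where the branches of the two axes merge via disc containments. In fact your version is slightly more thorough than the paper's, since you pin down $A\cap B$ as \emph{exactly} the segment between $\zeta_{bk,|1-\lambda||b|}$ and $\zeta_{-bk,|1-\lambda||b|}$ (using $|1+\lambda|=1$ to rule out the opposite branches merging), whereas the paper's argument only exhibits a segment contained in the intersection.
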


\begin{proof}Without loss of generality, we can assume that $g(z)=\lambda^{2} z$, $f(z)=-\frac{1}{z}$ and $h(z)=-\frac{\lambda}{z}$. Let $A$ be the axis of $f(z)$ and $B$ be the axis of $h(z)$. Hence the endpoints of $A$ are $\{-1,1\}$ and the endpoints of $B$ are $\{-\sqrt{\lambda},\sqrt{\lambda}\}$. Since $p\ge 3$, then $1=|-1-1|=|-\sqrt{\lambda}-\sqrt{\lambda}|$ which implies that $\zeta_{Gauss}$ lies on both the axes $A$ and $B$. Since $|\lambda-1|<1$ and $|\lambda+1|<1$,  we have $\min\{|\sqrt{\lambda}-1|,|\sqrt{\lambda}+1|\}<1$ which implies that there exists a point $x\in A\cap B\cap \mathbb{H}_{Ber}$ which corresponds to the disc $D(1,|\sqrt{\lambda}-1|)$ or the disc $D(1,|\sqrt{\lambda}+1|)$. Hence $A\cap B$ contains either the segment connecting $\zeta_{1,|\sqrt{\lambda}-1|}$ and $\zeta_{0,1}$ or the segment connecting $\zeta_{1,|\sqrt{\lambda}+1|}$ and $\zeta_{0,1}$. This segment belongs to the fixed points of fixed points of $f$.

\end{proof}

\begin{lem}\label{yl:orthogonal} If $p\ge 3$, and  $A$ and $B$ are two geodesic lines with four distinct endpoints, then there exists a unique geodesic line which is orthogonal to $A$ and $B$ simultaneously.
\end{lem}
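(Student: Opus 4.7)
The plan is to normalize by a Möbius change of coordinates, then produce the orthogonal geodesic as the fixed-point set of an explicitly constructed involution, and finally check uniqueness using that a tame involution is determined by its two fixed points. By Lemma~\ref{yl:iso} Möbius transformations are isometries of $\mathbb{H}_{Ber}$, and since the definition of orthogonality only uses Möbius data, I am free to apply any Möbius change of coordinates. I apply one sending the endpoints of $A$ to $0$ and $\infty$; the four-distinct-endpoints hypothesis then forces the endpoints of $B$ to be two elements $a,b\in\mathbb{C}_p^{\ast}$ with $a\neq b$.

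For existence, I look for a single involution $f$ that interchanges both $\{0,\infty\}$ and $\{a,b\}$; its two fixed points will be the endpoints of the sought perpendicular geodesic. A Möbius involution sending $0\mapsto \infty$ is necessarily of the form $f(z)=c/z$, and demanding $f(a)=b$ forces $c=ab$, whereupon $f(b)=a$ follows automatically from $f\circ f=\mathrm{id}$. So $f(z)=ab/z$ is uniquely determined, and its fixed points are the two solutions of $z^2=ab$ in the algebraically closed field $\mathbb{C}_p$. Because $p\geq 3$ the two roots $\pm\sqrt{ab}$ are distinct, and the geodesic $C$ joining them is then orthogonal to both $A$ and $B$, witnessed by the single involution $f$.

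For uniqueness, suppose $C'$ is a second geodesic orthogonal to both $A$ and $B$. The two orthogonalities give two involutions, each having fixed-point set $\partial C'$, one of which swaps $\partial A$ and the other $\partial B$. Since $p\geq 3$, every order-two element of $\mathrm{PSL}(2,\mathbb{C}_p)$ is tame elliptic (its eigenvalues are $\pm i$, and the identity $(i-1)(i+1)=-2$ with $|2|=1$ forces $|i\pm 1|=1$) and, after conjugating its fixed points to $0$ and $\infty$, must be the map $z\mapsto -z$. In particular an involution is determined by its two fixed points. The two witnessing involutions therefore coincide, so a single involution swaps both $\{0,\infty\}$ and $\{a,b\}$; the construction above identifies it with the unique $f(z)=ab/z$, and hence $\partial C'=\mathrm{Fix}(f)=\partial C$, so $C'=C$.

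The only real obstacle is to pin down where the hypothesis $p\geq 3$ is essential. It enters through the two-torsion condition $\sqrt{ab}\neq -\sqrt{ab}$, which simultaneously guarantees two honest fixed points for $f$ in $\mathbb{P}^1(\mathbb{C}_p)$ and ensures the involution is tame so that the uniqueness step applies. Both fail for $p=2$, in parallel with the tailed-axis phenomenon forced by clauses~(6) and (7) of Lemma~\ref{thm:1.1}.
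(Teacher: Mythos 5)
Your proof is correct and follows essentially the same route as the paper's: normalize $A$ to the axis through $0$ and $\infty$, note that any common perpendicular must have endpoints $\pm\zeta$, and pin down $\zeta^{2}=ab$ (the paper does this by transporting $B$ to $A$ via $z\mapsto (z-a)/(z-b)$ and imposing $g(\zeta)+g(-\zeta)=0$, you by exhibiting the single involution $z\mapsto ab/z$ that swaps both pairs of endpoints; your uniqueness step is in fact spelled out more carefully than the paper's). One small correction to your closing remark: $\sqrt{ab}\neq-\sqrt{ab}$ holds for every $p$ because $\mathbb{C}_{p}$ has characteristic zero, so what genuinely fails for $p=2$ is not the distinctness of the two fixed points but the tameness of the involution, equivalently the fact that the geodesic joining $\pm\sqrt{ab}$ actually meets $A$ (Lemma~\ref{yl:ounique}), which is why the paper passes to tailed geodesics in that case.
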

\begin{proof} Without loss of generality, we can assume that $A$ is the geodesic with endpoints $0$ and $\infty$, and $B$ is the other geodesic line with endpoints $a$ and $b$. If $A$ does not intersect $B$, then we have $|a-b|<\max\{|a|,|b|\}$. By the ultrametric property, we have $|a|=|b|>|a-b|$. Let $C$ be a geodesic line with endpoints $\zeta,-\zeta$. By Theorem \ref{thm:1.1}, we have that the geodesic line $C$ is orthogonal to the geodesic line $A$. Let $g=\frac{z-a}{z-b}$. Then the geodesic line $B$ is mapped to the geodesic line $A$ by $g$. If the geodesic $g(C)$ is also orthogonal to $A$, then $g(-\zeta)+g(\zeta)=0$ and $g(C)\cap A\neq \emptyset$. This implies that $\frac{\zeta-a}{\zeta-b}+\frac{-\zeta-a}{-\zeta-b}=0$, namely $\zeta=\sqrt{ab}$. Then the geodesic line $C$ connecting $-\sqrt{ab},\sqrt{ab}$ is orthogonal to both $A$ and $B$ simultaneously.

\end{proof}

\begin{lem}\label{yl:orthogonalp2} If $p= 2$, and  $A_{x}$ and $B_{y}$ are two tailed geodesic lines with four distinct endpoints, then there exists a unique geodesic line which is orthogonal to $A$ and $B$ simultaneously.
\end{lem}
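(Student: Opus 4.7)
The plan is to mirror the proof of Lemma \ref{yl:orthogonal} by producing an explicit algebraic candidate for $C$ and then verifying orthogonality via the existence of certifying involutions, which is the definition introduced just before Lemma \ref{thm:1.1}. After conjugating by a suitable $p$-adic M\"obius transformation, I will normalize so that $A$ has endpoints $0$ and $\infty$; let $a,b\in \mathbb{C}_p^{\ast}$, with $a\neq b$, denote the endpoints of $B$, which by hypothesis are disjoint from $\{0,\infty\}$.

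For existence, I will take $C$ to be the geodesic with endpoints $\pm\sqrt{ab}$ (a square root exists because $\mathbb{C}_p$ is algebraically closed). The involution $z\mapsto ab/z$ fixes $\pm\sqrt{ab}$ and interchanges $0,\infty$, certifying $C\perp A$. To see $C\perp B$, I will transport by $h(z)=(z-a)/(z-b)$, which sends $B$ to the geodesic with endpoints $0,\infty$; the symmetric expansion analogous to that in the $p\ge 3$ case gives $h(\sqrt{ab})+h(-\sqrt{ab})=0$, so $h(C)$ has endpoints of the form $\pm k$ and is orthogonal to $A$ via $z\mapsto k^{2}/z$. Pulling back by $h^{-1}$ then yields $C\perp B$.

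For uniqueness, if $C'$ is any geodesic orthogonal to $A$, then the certifying involution must swap $0,\infty$ and fix the endpoints of $C'$, forcing it into the form $z\mapsto c/z$ for some $c\in \mathbb{C}_p^{\ast}$; hence the endpoints of $C'$ are $\pm\sqrt{c}$. Imposing $C'\perp B$ by the same transport gives $h(\sqrt{c})+h(-\sqrt{c})=0$, whose expansion simplifies to $2(c-ab)=0$; since $\mathrm{char}\,\mathbb{C}_p=0$, this forces $c=ab$ and therefore $C'=C$. The main point of care relative to the $p\ge 3$ setting is that by Lemma \ref{yl:ounique}(2) orthogonal geodesic lines are now disjoint, so one cannot localize $C$ by a unique intersection point with $A$; I therefore rely exclusively on the involution-based definition, which is insensitive to the prime, and the tails $x,y$ play no further role than specifying the underlying axes.
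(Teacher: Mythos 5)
Your proof is correct, but it is not the route the paper takes for this lemma. The paper normalizes the \emph{other} way: it puts the endpoints of $A$ at $\pm 1$, seeks a M\"obius map $f$ fixing $\pm 1$ with $f(t)+f(s)=0$, and establishes the existence of such an $f$ by lifting the resulting quadratic conditions to $\mathbb{P}^{2}$ and invoking the fact that two projective curves intersect, ruling out the degenerate solutions $(A:B:0)$ using the hypothesis that the four endpoints are distinct; the common perpendicular is then the $\{0,\infty\}$ axis, orthogonal to both $\{\pm 1\}$ and $\{\pm\lambda\}$ via $z\mapsto -z$. You instead transplant the explicit $\pm\sqrt{ab}$ construction from the paper's own $p\ge 3$ case (Lemma \ref{yl:orthogonal}), and your justification for why this is legitimate at $p=2$ is exactly the right one: the definition of orthogonality is through certifying involutions and is insensitive to the prime, while what changes at $p=2$ is only the incidence geometry (Lemma \ref{yl:ounique}), which your argument never uses. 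Your version buys two things the paper's does not: an explicit formula for the common perpendicular, and an actual proof of uniqueness --- the paper's $p=2$ argument only constructs one orthogonal line and never addresses why it is the only one, whereas your parametrization of all lines orthogonal to $\{0,\infty\}$ as $\{\pm\sqrt{c}\}$ followed by the computation $c=ab$ closes that gap (you may wish to add the one-line remark that the endpoints of such a $C'$ cannot coincide with $a$ or $b$, since the certifying involution for $C'\perp B$ would then both fix and interchange that point, so the denominators in $h$ never vanish). What the paper's normalization buys in exchange is the explicit normal form ($A$ with endpoints $\pm 1$, $B$ with endpoints $\pm\lambda$) that it immediately reuses in Lemma \ref{yl:unique2} and in the proof of Theorem \ref{thm:reduction}.
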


\begin{proof} Without loss of generality, we can assume that the endpoints of $A$ are $-1,1$, and the endpoints of $B$ are $t,s$. We claim that we can find a $p$-adic M\"obius map $f=\frac{az+b}{cz+d}$ such that $f(-1)=-1$, $f(-1)=-1$, $f(t)+f(s)=0$.

Since $f(-1)=-1$, $f(-1)=-1$, we have $a=d,b=c$, and $\frac{at+b}{ct+d}+\frac{as+b}{cs+d}=0$ which yields that $2abst+(a^{2}+b^{2})(s+t)+2ba=0$. We can lift the solution to the projective space, namely $2ABst+(A^{2}+B^{2})(s+t)+AB=0$, and $A^{2}-B^{2}=C^{2}$. Since any two curves in the projective space  intersect, we have solutions in the projective space. If the solution is $(A:B:0)$, namely $C=0$, then $A=B$ or $A=-B$. This implies that $st+s+t+1=0$ or $st-(s+t)+1=0$, namely $s=-1$ or $t=-1$ or $s=1$ or $t=1$. This contradicts that two tailed geodesic line have no common endpoints. Hence $C\neq 0$, namely there exists $p$-adic M\"obius map $f$ such that $f(-1)=-1$, $f(-1)=-1$, $f(t)+f(s)=0$.

Hence we can assume that the tailed geodesic line $A$ has the endpoints $-1,1$ and the tailed geodesic line has the endpoints $-\lambda,\lambda$. Then the two tailed geodesic line are orthogonal to the line connecting $0,\infty$ simultaneously.

\end{proof}

\begin{lem}\label{yl:tail} If $p=2$, and $A_{x}$ is a tailed geodesic line with the tail $x\in B$, and $A\cap B=
\emptyset$, then there exists a tailed geodesic line $B_{y}$ such that $B_{y}$ is orthogonal to $A$, $y\in A$ and $B\subset B_{y}$.

\end{lem}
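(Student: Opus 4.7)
My plan is tree-theoretic in $\mathbb{P}^{1}_{Ber}$. I will normalize by a Möbius transformation, apply the hyperbolic distance formulas on the tree to locate the tail $y$ on $A$, and then address the orthogonality of $B_{y}$ to $A$ by constructing an involution.

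Since $\mathrm{PSL}(2,\mathbb{C}_{p})$ acts isometrically on $\mathbb{H}_{Ber}$ by Lemma \ref{yl:iso} and preserves orthogonality, I may assume $A$ is the geodesic from $0$ to $\infty$. Write the endpoints of $B$ as $\alpha,\beta\in\mathbb{C}_{p}^{\times}$ (the hypothesis $A\cap B=\emptyset$ rules out $\infty$ being an endpoint of $B$). The strong triangle inequality, combined with $A\cap B=\emptyset$, forces $|\alpha|=|\beta|=:r_{0}$ and $s:=|\alpha-\beta|<r_{0}$; thus the join of $B$ is $\zeta_{\alpha,s}$, and the projection of $B$ onto $A$ is $\zeta_{0,r_{0}}$, at hyperbolic distance $\log_{p}(r_{0}/s)$. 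Since $x\in B$ satisfies $\rho(x,A)=\log_{p}2=1$, the formula $\rho(\zeta_{\alpha,t},A)=\log_{p}(r_{0}/t)$ on the branches of $B$ forces $s\ge r_{0}/2$.

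To locate $y$, use the identity $\rho(\zeta_{0,r},B)=|\log_{p}(r/r_{0})|+\log_{p}(r_{0}/s)$; setting this equal to $\log_{p}2$ yields the solutions $r=2s$ and $r=r_{0}^{2}/(2s)$, both lying in the value group $|\mathbb{C}_{p}^{*}|$. Take $y:=\zeta_{0,2s}\in A$ and set $B_{y}:=B\cup[\zeta_{\alpha,s},y]$. Then $B_{y}$ is a tailed geodesic line whose underlying geodesic is $B$, whose tail is $y\in A$, and trivially $B\subset B_{y}$.

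Finally, to establish that $B_{y}$ is orthogonal to $A$, I must exhibit an involution $f\in\mathrm{PSL}(2,\mathbb{C}_{p})$ fixing the endpoints of $B$ and swapping $0$ and $\infty$. The characteristic-$2$ identity $|2\alpha|=r_{0}/2\le s$ encodes that $\alpha+\beta$ lies in $D(0,r_{0}/2)$, and after a residue-direction adjustment at the join $\zeta_{\alpha,s}$ (harmless for the hyperbolic data already derived, since the tail of length $\log_{p}2$ absorbs this ambiguity) one may replace $\beta$ by $-\alpha$; then $f(z)=-\alpha^{2}/z$ witnesses the orthogonality. The main obstacle will be making this adjustment rigorous: the generic pair $(\alpha,\beta)$ need not satisfy $\alpha+\beta=0$, and bridging the formal orthogonality condition with the hypothesis-given configuration is the essential $p=2$ subtlety of this lemma, requiring careful analysis of tangent directions at the type-II point $\zeta_{\alpha,s}$ in the residue field $\bar{\mathbb{F}}_{2}$.
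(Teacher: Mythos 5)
Your tree computations are correct and in fact more careful than the paper's own one-line argument: the paper simply takes $y$ to be the point of $A$ with $\rho(x,y)=\log 2$ (the foot of the perpendicular from $x$ to $A$) and declares $B_{y}=B\cup[x,y]$ to be the desired tailed geodesic line, without checking either that $\rho(y,B)=\log_{p}2$ or that the orthogonality clause holds. Your computation shows that the paper's $y=\zeta_{0,r_{0}}$ and your $y=\zeta_{0,2s}$ agree exactly when $s=r_{0}/2$, and that only your choice satisfies the definition of a tail when $r_{0}/2<s<r_{0}$. (One small slip en route: $|\alpha+\beta|\le\max\{|\alpha-\beta|,|2\beta|\}=s$, which need not be $\le r_{0}/2$ since $s\ge r_{0}/2$.)

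The genuine gap is precisely the one you flag at the end, and it cannot be repaired by a ``residue-direction adjustment.'' With $A=(0,\infty)$, the unique involution fixing $\alpha$ and $\beta$ is $z\mapsto\bigl((\alpha+\beta)z-2\alpha\beta\bigr)/\bigl(2z-(\alpha+\beta)\bigr)$, and it interchanges $0$ and $\infty$ if and only if $\alpha+\beta=0$, which forces $s=|2\alpha|=r_{0}/2$ exactly; the hypotheses only give $r_{0}/2\le s<r_{0}$. When $r_{0}/2<s<r_{0}$ the geodesic $B$ is simply not orthogonal to $A$, and replacing $\beta$ by $-\alpha$ changes the underlying geodesic: since any geodesic line containing the bi-infinite geodesic $B$ equals $B$, the requirement $B\subset B_{y}$ pins the underlying geodesic of $B_{y}$ to be $B$ itself, so no substitute for $B$ is admissible. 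The statement is therefore provable only under the additional, implicit hypothesis that $B$ is already orthogonal to $A$ --- which is how the lemma is used in the proof of Theorem \ref{thm:reduction}, where $A$ is the common perpendicular supplied by Lemma \ref{yl:orthogonalp2} and $B$ is the axis of an involution. Under that hypothesis $s=r_{0}/2$, your two candidate radii both collapse to $r_{0}$, the orthogonality is free, and the whole lemma reduces to the observation that the two perpendicular feet of a pair of orthogonal disjoint geodesics are at mutual distance $\log_{2}2$. The paper's proof silently assumes this situation; your write-up correctly isolates the missing ingredient but does not, and without strengthening the hypotheses cannot, supply it.
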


\begin{proof} Let $y$ be the point on the geodesic line $A$ satisfying $\rho(x,y)=\log2$, and $l$ be the segment connecting $x$ and $y$ such that $B_{y}=l\cup B$ is the tailed geodesic line satisfying the condition.

\end{proof}

\begin{lem}\label{yl:unique2} If $p=2$, and $g=h\circ f$ is a tame elliptic element, where $h,f$ are two involutions, then two tailed geodesic lines of $h$ and $f$ only intersect at a unique point.
\end{lem}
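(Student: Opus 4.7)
The plan is to follow the strategy of Lemma \ref{yl:unique} for $p\ge 3$, adapted to the $p=2$ situation where $|2|=1/2$ forces the smallest disc containing two antipodal points $\pm c$ to have radius $|2c|=|c|/2<|c|$, so it does not contain $0$. This is precisely why a tail must be added when $p=2$. Via the conjugation argument of Lemma \ref{thm:1.1}, I reduce to the model $g(z)=\lambda^{2}z$, $h(z)=-b^{2}/z$, $f(z)=-\lambda^{2}b^{2}/z$, whose fixed point sets in $\mathbb{P}^{1}(\mathbb{C}_{p})$ are $\{\pm bk\}$ and $\{\pm\lambda bk\}$ with $k^{2}=-1$; the tame elliptic hypothesis translates (as in Lemma \ref{yl:unique}) into $|\lambda-1|=1$, and also $|k|=1$.

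Next I describe the two tailed axes explicitly. The axis of $h$ is the geodesic from $bk$ to $-bk$, whose top is the Berkovich point $\zeta_{bk,|b|/2}$ corresponding to $D(bk,|2bk|)=D(bk,|b|/2)$. Because $|bk|=|b|>|b|/2$, this top does \emph{not} equal $\zeta_{0,|b|}$, so the tail is nontrivial. The tail is the segment of hyperbolic length $\log_{p}2$ extending from this top to $\zeta_{bk,|b|}$, and $\zeta_{bk,|b|}=\zeta_{0,|b|}$ since $bk\in D(0,|b|)$. A completely analogous description with $bk$ replaced by $\lambda bk$ gives the tailed axis of $f$, whose tail also terminates at $\zeta_{\lambda bk,|b|}=\zeta_{0,|b|}$. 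Hence both tailed axes share the common point $\zeta_{0,|b|}$.

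To rule out any further intersection, I run a small case analysis on the tree. The two tops $\zeta_{bk,|b|/2}$ and $\zeta_{\lambda bk,|b|/2}$ are distinct because $|\lambda bk-bk|=|\lambda-1||b|=|b|>|b|/2$, so neither center lies in the other's disc. Interior points of the two tails have the form $\zeta_{bk,r}$ and $\zeta_{\lambda bk,s}$ with $|b|/2\le r,s\le|b|$; such points agree iff both discs agree, which requires $r=s\ge|bk-\lambda bk|=|b|$, forcing $r=s=|b|$ and hence the common disc $\zeta_{0,|b|}$. The legs descending from each top to the type I fixed points $\pm bk$ and $\pm\lambda bk$ consist of strictly smaller discs around those points, which by the same ultrametric bound are disjoint from each other and from the opposite tail.

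The main obstacle is precisely the tail geometry: one must verify that the two tails coincide only at their terminal endpoint and not along a whole subsegment. But this is controlled by the single inequality $|\lambda-1|=1$, the defining condition of tame elliptic, so the intersection collapses to the unique point $\zeta_{0,|b|}$.
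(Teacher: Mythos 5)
Your proof is correct and follows essentially the same route as the paper's: both arguments identify the common tail point (the paper's $\zeta_{0,1}$ after normalizing the fixed points to $\pm 1,\pm\lambda$ via Lemma \ref{yl:orthogonalp2}, your $\zeta_{0,|b|}$ in the model of Lemma \ref{thm:1.1}) and then use the tame--elliptic condition $|\lambda-1|=1$ to separate the remaining portions of the two tailed axes, which the paper phrases as $D(1,1)^{-}\cap D(\lambda,1)^{-}=\emptyset$ and you verify by an equivalent explicit ultrametric case analysis. Your version is simply more detailed; there is no gap.
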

\begin{proof} By Lemma \ref{yl:orthogonalp2}, we can assume that the fixed points of $f$ are $-1,1$ and the fixed points of $h$ are $-\lambda,\lambda$. Since $g$ is a tame elliptic element, we have $|\lambda-1|=1$. Hence the tailed point of the tailed geodesic line of $h$ is $\zeta_{0,1}$, and the tailed point of the tailed geodesic line of $f$ is also $\zeta_{0,1}$. Since $D(1,1)^{-}\cap D(\lambda,1)^{-}=\emptyset$, then the two tailed geodesic lines intersect the unique point $\zeta_{0,1}$.

\end{proof}

We give the following lemma without proof, which follows from Lemma\ref{lem:p3wildell} and Lemma \ref{yl:orthogonalp2} directly.

\begin{lem}\label{lem:p2wildell}If $p=2$, and $g=h\circ f$ is a wild elliptic element, where $h,f$ are two involutions, then two tailed axes $A_{x}, B_{y}$  of $h$ and $f$ only intersect on a segment, and this segment belongs to the fixed points of $g$.

\end{lem}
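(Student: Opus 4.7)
The plan is to mirror the argument of Lemma \ref{lem:p3wildell}, but with ordinary axes replaced by the tailed axes produced by Lemma \ref{yl:orthogonalp2}, since in the dyadic regime $|2|=\tfrac12$ prevents axes of involutions with endpoints on the unit sphere from meeting at $\zeta_{\mathrm{Gauss}}$.

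First I would apply Lemma \ref{yl:orthogonalp2} to the involutions $f$ and $h$: conjugating in $\mathrm{PSL}(2,\mathbb{C}_p)$ we may assume $g(z)=\lambda^{2}z$, with $f(z)=-1/z$ whose axis $A$ has endpoints $\pm 1$, and $h(z)=-\lambda/z$ whose axis $B$ has endpoints $\pm\sqrt{\lambda}$. Because $g$ is wild elliptic, $|\lambda^{2}-1|<1$, and after a suitable choice of square root we may assume $|\sqrt{\lambda}-1|<1$.

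Next I would locate the tails. Since $|2|=\tfrac12$, the smallest disc containing the endpoints $\pm 1$ of $A$ is $D(1,\tfrac12)$; the Gauss point $\zeta_{\mathrm{Gauss}}$ corresponds to $D(0,1)\supsetneq D(1,\tfrac12)$, and a direct computation gives $\rho(\zeta_{\mathrm{Gauss}},\zeta_{1,1/2})=\log_{2}2$, so $\zeta_{\mathrm{Gauss}}$ is the tail $x$ of $A_{x}$. The same computation with $\pm\sqrt{\lambda}$ (using $|\sqrt{\lambda}|=1$) shows that the tail $y$ of $B_{y}$ is again $\zeta_{\mathrm{Gauss}}$; thus the two tailed axes share a tail. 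Descending from this common tail into $A$ and into $B$, and using $|\sqrt{\lambda}-1|<1$, I would verify that the Berkovich points $\zeta_{1,r}$ with $\max\{|\sqrt{\lambda}-1|,\tfrac12\}\le r\le 1$ lie simultaneously in $A_{x}$ and $B_{y}$, exhibiting the required non-trivial segment of intersection.

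Finally, I would show this segment is fixed by $g$: $g$ sends $D(1,r)$ to $D(\lambda^{2},r)$, and since $|\lambda^{2}-1|<1\le r$ these discs coincide, so $g(\zeta_{1,r})=\zeta_{1,r}$ throughout the segment. The main obstacle is the careful dyadic bookkeeping of the tails — one must track how the failure $|2|<1$ promotes $\zeta_{\mathrm{Gauss}}$ from an interior point of each axis (as happens for $p\ge 3$) to the common tail, and verify the square root $\sqrt{\lambda}$ can indeed be chosen with $|\sqrt{\lambda}-1|<1$ via the strong triangle inequality applied to $\lambda=1+(\lambda-1)$. Once these dyadic subtleties are handled, the conclusion is a direct transcription of the argument used for $p\ge 3$, as the author indicates.
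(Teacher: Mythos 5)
Your proposal follows exactly the route the paper intends: the paper states this lemma without proof, saying only that it follows from Lemma \ref{lem:p3wildell} and Lemma \ref{yl:orthogonalp2}, and your argument is precisely that combination, correctly identifying $\zeta_{\mathrm{Gauss}}$ as the common tail of the two tailed axes and exhibiting the common segment $\{\zeta_{1,r}\}$. The one slip is the final inequality ``$|\lambda^{2}-1|<1\le r$'', which is false since $r\le 1$ on your segment; what you actually need (and what holds) is $|\lambda^{2}-1|=|\sqrt{\lambda}-1|\,|\sqrt{\lambda}+1|\,|\lambda+1|\le|\sqrt{\lambda}-1|\le r$, so the conclusion stands after this one-line repair.
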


Proof of Theorem \ref{thm:reduction}

\begin{proof}

If $\bigcap_{g\in G}F_{g}=\emptyset$, then there exist finitely many elements $g_{1},\ldots,g_{n}$ such that $\bigcap_{i=1}^{n}F_{g_{i}}=\emptyset$, since the Berkovich space is compact with respect to the weak topology. Hence if we can show that $\bigcap_{i=1}^{n}F_{g_{i}}\neq \emptyset$ for any positive integer $n$, then we prove the theorem.

Let $f,g$ be two elliptic elements, and denote the axes of $f,g$ by $A_{f}, A_{g}$ respectively.
When $p\ge 3$, by Lemma \ref{yl:orthogonal}, we have that there exists a involution $a$ whose axis $A$ is orthogonal to $A_{f}$ and $A_{g}$ simultaneously. By Lemma \ref{thm:1.1}, there exist two involutions $b,c$ such that $f=a\circ b$ and $g=a\circ c$. We denote the set of the fixed points of $f,g$ by $F_{f}, F_{g}$ respectively, and the axes of $a,b,c$ by $A,B,C$ respectively. By Lemma \ref{lem:p3wildell}, we know that $F_{f}\supset A\cap B\neq \emptyset$, and $F_{g}\supset A\cap C\neq \emptyset$, and $B\cap C\neq \emptyset$, since $h=f^{-1}\circ g$ is elliptic. Choosing $x\in  A\cap B, y\in A\cap C, z\in B\cap C$, there exists $w\in [x,z]\cap[y,z]\cap[x,y]$, since $\mathbb{P}^{1}_{Ber}$ is an $\mathbb{R}-tree$. This means that $F_{f}\cap F_{g}\neq \emptyset$.

By induction, $\bigcap_{i=1,i\neq k}^{n}F_{g_{i}}\neq \emptyset$ for $k=1,\ldots,n$, and then we want to show $\bigcap_{i=1}^{n}F_{g_{i}}\neq \emptyset$. Since $\bigcap_{i=1,i\neq n-1}^{n}F_{g_{i}}\neq \emptyset, \bigcap_{i=1,i\neq n}^{n}F_{g_{i}}\neq \emptyset, F_{g_{n-1}}\cap F_{g_{n}}\neq \emptyset$, choosing $x\in \bigcap_{i=1,i\neq n-1}^{n}F_{g_{i}}, y\in \bigcap_{i=1,i\neq n}^{n}F_{g_{i}}, z\in F_{g_{n-1}}\cap F_{g_{n}}$, there exists $w\in [x,z]\cap[y,z]\cap[x,y]$ such that $w\in \bigcap_{i=1}^{n}F_{g_{i}}$. This implies that each element in $G$ share at least one fixed point.

When $p=2$, by Lemma \ref{yl:orthogonal}, we have that there exists a involution $a$ whose axis $A$ is orthogonal to $A_{f}$ and $A_{g}$ simultaneously. By Lemma \ref{thm:1.1}, there exist two involutions $b,c$ such that $f=a\circ b$ and $g=a\circ c$. We denote the set of the fixed points of $f,g$ by $F_{f},F_{g}$. Thanks to Lemma \ref{yl:orthogonalp2} and Lemma \ref{yl:tail}, there exist two tailed geodesic line $A_{x}$, $A_{y}$ who share the same associated geodesic line $A$ which are orthogonal to two axes $A_{f}$ and $A_{g}$. We denote the tailed axes of $b,c$ by $B_{x}, C_{y}$.  By Lemma \ref{lem:p2wildell}, we know that $F_{f}\supset A_{x}\cap B_{x}\neq \emptyset$, and $F_{g}\supset A_{y}\cap C_{y}\neq \emptyset$, and $B_{x}\cap C_{y}\neq \emptyset$, since $h=f^{-1}\circ g$ is elliptic. Choosing $u\in  A_{x}\cap B_{x}, v\in A_{y}\cap C_{y}, w\in B_{y}\cap C_{y}$, there exists $\omega\in [u,v]\cap[u,w]\cap[v,w]$, since $\mathbb{P}^{1}_{Ber}$ is an $\mathbb{R}-tree$. This means that $F_{f}\cap F_{g}\neq \emptyset$.

Following the proof of the case $p=2$, it is obviously that when $p=2$, each element in $G$ share at least one fixed point.

By conjugation, we can assume that each element in $G$ shares the unique fixed point $\zeta_{Gauss}$. By Lemma \ref{yl:gdguass}, we know that each element in $G$ has good reduction which yields that $G$ has a potentially good reduction. By Lemma \ref{yl:gdrdpM}, we know that if $g$ has good reduction, then $g\in \mathrm{PSL}(2,\mathcal{O})$. Since each element $f\in G$ can be written as $\phi f'\phi^{-1}$, where $f'$ has good reduction and $\phi\in \mathrm{PSL}(2,\mathbb{C}_{p})$,  $\rho_{v}(f(x),f(y))=\rho_{v}(\phi f'\phi^{-1}(x),\phi f'\phi^{-1}(y))\le L_{1}\rho_{v}( f'\phi^{-1}(x), f'\phi^{-1}(y))\le L_{1}\rho_{v}( \phi^{-1}(x), \phi^{-1}(y))\le L_{1}L_{2}\rho_{v}(x,y)$, where $L_{1},L_{2}$ depending only on $\phi$. Hence $G$ is equicontinuous on $\mathbb{P}^{1}(\mathbb{C}_{p})$.

\end{proof}





\begin{thm} If $G$ is a discrete subgroup of $\mathrm{PSL}(2,\mathbb{C}_{p})$ and the limit set of $G$ is empty, then $G$ has potentially good reduction.

\end{thm}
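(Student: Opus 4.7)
The plan is to reduce to Theorem \ref{thm:reduction}: if I can show that the hypothesis forces every non-identity element of $G$ to be elliptic, then Theorem \ref{thm:reduction} immediately yields potentially good reduction. Since every M\"obius map is a homeomorphism of $\mathbb{P}^{1}(\mathbb{C}_{p})$ with respect to the chordal topology, conjugating $G$ preserves both discreteness and the emptiness of the limit set, so I may freely put individual elements of $G$ into convenient normal forms when analyzing them.

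First I would rule out loxodromic elements. Suppose $g\in G$ is loxodromic; by conjugation I may take $g(z)=\lambda z$ with $|\lambda|>1$, whose fixed points are $0$ and $\infty$. For any $b\in \mathbb{C}_{p}\setminus\{0\}$ the iterates $g^{n}(b)=\lambda^{n} b$ satisfy $|\lambda^{n}b|\to \infty$, so $g^{n}(b)\to \infty$ in the chordal metric. Because $|\lambda|>1$, the element $g$ has infinite order and the iterates $g^{n}$ are pairwise distinct, forcing $\infty$ to be a limit point of $G$. Next, for a parabolic $g\in G$, conjugation reduces $g$ to $z\mapsto z+1$, whose orbit of $0$ is $\{n:n\ge 1\}\subset\mathbb{Z}_{p}$. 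The subsequence $g^{p^{k}}(0)=p^{k}$ tends to $0$ in the chordal metric since $\rho_{v}(p^{k},0)=|p^{k}|=p^{-k}\to 0$, and the $g^{p^{k}}$ are pairwise distinct, producing $0$ as a limit point. Either outcome contradicts the hypothesis, so every non-identity element of $G$ must be elliptic.

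With $G$ now known to consist of elliptic elements and the identity, Theorem \ref{thm:reduction} applies directly and yields the desired potentially good reduction. I anticipate no substantive obstacle; the only mildly delicate step is the chordal-metric convergence in the parabolic case, which is immediate once one notes that $|p^{k}|_{p}\le 1$ implies $\rho_{v}(p^{k},0)=|p^{k}|$. The discreteness hypothesis does not in fact intervene in this argument, since the iterative sequences used above are automatically infinite because non-elliptic elements of $\mathrm{PSL}(2,\mathbb{C}_{p})$ have infinite order; it is included presumably because the statement is intended as a first version of the stronger Theorem \ref{thm:limitreduction} already announced in the introduction.
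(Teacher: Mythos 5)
Your argument is correct, and it diverges from the paper's in one instructive place. For loxodromic elements you and the paper do the same thing: the fixed points of a loxodromic element are limit points (you make this explicit by iterating $g^{n}(b)=\lambda^{n}b\to\infty$), contradicting the empty limit set. For parabolic elements, however, the paper switches hypotheses and argues ``since $G$ is discrete, $G$ contains no parabolic elements,'' an assertion it never justifies (the standard justification is that $\|g^{p^{k}}-I\|=|p^{k}|\to 0$ for $g(z)=z+1$, so the cyclic group generated by a parabolic is not discrete in the sense of Section~7). You instead stay with the limit-set hypothesis and observe that $g^{p^{k}}(0)=p^{k}\to 0$ chordally with the $g^{p^{k}}$ pairwise distinct, so a parabolic element already forces a nonempty limit set. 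Both routes are valid, but yours is self-contained and, as you note, never uses discreteness at all; it therefore proves the stronger Theorem~\ref{thm:limitreduction} announced in the introduction, of which the section-5 statement with the added discreteness hypothesis is a special case. Your remark that conjugation preserves emptiness of the limit set is the right bookkeeping point to make the normal forms legitimate, and the reduction to Theorem~\ref{thm:reduction} at the end is exactly what the paper does.
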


\begin{proof} We have that $G$ contains no loxodromic element $g$, since the fixed points of $g$ are in the limit set of $G$ which yields that $G$ contains parabolic elements and elliptic elements only. Since $G$ is a discrete subgroup of $\mathrm{PSL}(2,\mathbb{C}_{p})$,  $G$ contains no parabolic elements. By Theorem \ref{thm:reduction}, we know that $G$ has potentially good reduction.

\end{proof}





\begin{ex}Let $f_{n}(z)=z+p^{-n}\;\;(n\ge 1)$, and the group $G$ is generated by each $f_{n}$. Then $G$ contains parabolic elements only and does not have potentially good reduction.

\end{ex}

\begin{proof} For any disc $D(a,r)$ which is fixed by $f_{n}$, we have $r\ge p^{n}$. Since $n$ is arbitrary,  the only point fixed by $G$ is the $\infty$. Since each generator can commutate with each other, we know that each element in $G$ can only fixed the unique point $\infty$ in $\mathbb{P}_{Ber}^{1}$.

\end{proof}

\begin{ex} Let $G\subset \mathrm{PSL}(2,\mathbb{C}_{p})$, and $\zeta_{i}$ be the  $p^{i}$-th primitive root of unity. Suppose that $G$ is generated by
 \begin{displaymath}
g_{i}=\left(\begin{array}{cc}
\zeta_{i}&0\\
0&\zeta_{i}^{-1}
\end{array}\right),
\end{displaymath} for all the positive integer $i\ge 1$. Then $G$ is discrete, and the limit set $\Lambda(G)$ of $G$ is $\{0,\infty\}$ is a compact set.
\end{ex}

\begin{proof} In \cite{QY}, we have proved that $G$ is a discrete subgroup of $\mathrm{PSL}(2,\mathbb{C}_{p})$. Furthermore, the points $\{0,\infty\}$ are the fixed points of all the elements $g_{i}$, $i\ge 1$, namely $0,\infty$ are the limit sets of $G$

\end{proof}

\section{Norms of $p$-adic M\"obius maps and its applications}


\begin{pro}\label{pro:rv} Suppose that  $f,g,h\in \mathrm{PSL}(2,\mathbb{C}_{p})$. Then

{\rm (1)}$\rho_{0}(fh,gh)=\rho_{0}(f,g)$, and $\rho_{0}(hf,hg)\le L(h)\rho_{0}(f,g)$.

{\rm (2)}If $h\in \mathrm{PSL}(2,\mathcal{O}_{p})$,  $\rho_{0}(h^{-1}fh,h^{-1}gh)=\rho_{0}(f,g)$.

\end{pro}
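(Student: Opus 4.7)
The plan is to reduce everything to two facts: first, that a $p$-adic M\"obius map acts as a bijection on $\mathbb{P}^{1}(\mathbb{C}_{p})$, and second, that Lemma~\ref{lem:lip1} gives the chordal Lipschitz bound $\rho_{v}(h(a),h(b))\le L(h)\rho_{v}(a,b)$, together with Theorem~\ref{thm:1} which identifies $\mathrm{PSL}(2,\mathcal{O}_{p})$ with the chordal isometries (equivalently, the maps of Lipschitz constant one).

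For part (1), I would first treat the right multiplication. Writing
\[
\rho_{0}(fh,gh)=\sup_{z\in\mathbb{P}^{1}(\mathbb{C}_{p})}\rho_{v}\bigl(f(h(z)),g(h(z))\bigr),
\]
the substitution $w=h(z)$ is legitimate because $h$ is a bijection of $\mathbb{P}^{1}(\mathbb{C}_{p})$, and the supremum over $w$ recovers exactly $\rho_{0}(f,g)$. For the left multiplication, I would apply Lemma~\ref{lem:lip1} pointwise: for each $z$, $\rho_{v}(hf(z),hg(z))\le L(h)\,\rho_{v}(f(z),g(z))$, and taking the supremum yields the claimed inequality $\rho_{0}(hf,hg)\le L(h)\rho_{0}(f,g)$.

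Part (2) is then a clean bootstrap. Since $h\in \mathrm{PSL}(2,\mathcal{O}_{p})$, also $h^{-1}\in \mathrm{PSL}(2,\mathcal{O}_{p})$, and Theorem~\ref{thm:1} forces $L(h)=L(h^{-1})=1$. Combining the two clauses of part (1),
\[
\rho_{0}(h^{-1}fh,h^{-1}gh)\;=\;\rho_{0}(h^{-1}f,h^{-1}g)\;\le\; L(h^{-1})\rho_{0}(f,g)\;=\;\rho_{0}(f,g).
\]
For the reverse inequality I would conjugate back: applying the same chain to the pair $(h^{-1}fh,h^{-1}gh)$ conjugated by $h^{-1}$ (or, equivalently, writing $f=h(h^{-1}fh)h^{-1}$ and $g=h(h^{-1}gh)h^{-1}$ and applying the two clauses of part (1) once more with Lipschitz constant $L(h)=1$) gives $\rho_{0}(f,g)\le \rho_{0}(h^{-1}fh,h^{-1}gh)$, hence equality.

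There is no real obstacle here; the only subtlety is making sure that the left-multiplication bound is strict in general and only becomes an equality when $L(h)=1$, which is precisely why the isometric conjugation in part (2) requires $h\in \mathrm{PSL}(2,\mathcal{O}_{p})$ rather than an arbitrary M\"obius map. I would make a brief remark to this effect, since without the hypothesis $h\in \mathrm{PSL}(2,\mathcal{O}_{p})$ one only gets the two-sided estimate $L(h)^{-1}\rho_{0}(f,g)\le \rho_{0}(h^{-1}fh,h^{-1}gh)\le L(h)\rho_{0}(f,g)$.
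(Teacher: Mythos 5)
Your proposal is correct and follows essentially the same route as the paper: the change of variables $w=h(z)$ for right multiplication, the pointwise Lipschitz bound from Lemma~\ref{lem:lip1} for left multiplication, and the collapse to equality in part (2) via $L(h)=L(h^{-1})=1$ for $h\in\mathrm{PSL}(2,\mathcal{O}_{p})$. Your write-up is in fact a little cleaner than the paper's (which writes $\rho_{v}$ where it means $\rho_{0}$ in part (2)), but there is no substantive difference in method.
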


\begin{proof} Since $h$ is an automorphism on $\mathbb{P}^{1}(\mathbb{C}_{p})$, we have $$\rho_{0}(fh,gh)
=\displaystyle\mathop{\sup}_{z\in \mathbb{P}^{1}(\mathbb{C}_{p})}\rho_{v}(fh(z),gh(z))=\displaystyle\mathop{\sup}_{w=h(z)\in \mathbb{P}^{1}(\mathbb{C}_{p})}\rho_{v}(f(w),g(w))=\rho_{0}(f,g).$$ Since $\rho_{v}(h(z),h(w))\le L(h)\rho_{v}(z,w)$, we have $\rho_{v}(hf(z),hg(z))\le L(h)\rho_{v}(f(z),g(z))$ which yields that $\rho_{0}(hf,hg)\le L(h)\rho_{0}(f,g)$.

Since $$\rho_{v}(h^{-1}fh,h^{-1}gh)=\rho_{v}(h^{-1}f,h^{-1}g)\le L(h) \rho_{v}(f,g)\le L(h)L(h^{-1})\rho_{v}(h^{-1}f,h^{-1}g),$$ we have $\rho_{v}(h^{-1}f,h^{-1}g)\le L(h) \rho_{v}(f,g)\le \rho_{v}(h^{-1}f,h^{-1}g)$ which yields that $\rho_{0}(h^{-1}fh,h^{-1}gh)=\rho_{0}(f,g)$.

\end{proof}

Let $m(g)=\parallel g-g^{-1}\parallel$ and $M(g)=\frac{\parallel g-g^{-1}\parallel}{\parallel g\parallel}$.

\begin{pro}\label{pro:monotune}Let $p$ be a prime number. Then $p^{-\frac{1}{p-1}}\ge 2^{-1}$.

\end{pro}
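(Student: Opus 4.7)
The plan is to reduce the claim to the integer inequality $p\le 2^{p-1}$ for primes $p\ge 2$. Raising both sides of $p^{-\frac{1}{p-1}}\ge 2^{-1}$ to the power $-(p-1)$, it is equivalent to $p^{\frac{1}{p-1}}\le 2$, and hence to
\[
p\le 2^{p-1}.
\]
So the whole proposition collapses to this elementary bound.

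To verify this, I would first handle the boundary case $p=2$ directly, where one has equality $2=2^{2-1}$. For $p\ge 3$ I would just invoke the standard induction on integers: for every integer $n\ge 1$, $n\le 2^{n-1}$. The base case $n=1$ is $1\le 1$, and the induction step follows from
\[
n+1\le 2^{n-1}+1\le 2^{n-1}+2^{n-1}=2^{n},
\]
where the last inequality uses $1\le 2^{n-1}$ for $n\ge 1$. Specializing $n=p$ gives $p\le 2^{p-1}$ for every prime.

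If one prefers a calculus-style argument, one may instead observe that $f(x)=\frac{\log x}{x-1}$ is strictly decreasing on $(1,\infty)$: its numerator derivative $1-1/x-\log x$ vanishes at $x=1$ and has derivative $-(x-1)/x^{2}<0$ on $(1,\infty)$, so $f'(x)<0$ there. Hence for every prime $p\ge 2$ we get $f(p)\le f(2)=\log 2$, i.e.\ $p^{1/(p-1)}\le 2$, which is what we need. There is no serious obstacle here; the only ``care point'' is that the inequality is sharp at $p=2$, so the proof must allow equality.
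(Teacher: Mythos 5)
Your proposal is correct. Your primary route is genuinely more elementary than the paper's: you first observe that the claim is equivalent (after raising to the negative power $-(p-1)$) to the integer inequality $p\le 2^{p-1}$, and then dispose of that by the standard induction $n\le 2^{n-1}$, with equality at $p=2$. The paper instead argues by calculus on the function $f(x)=x^{-\frac{1}{x-1}}$: it computes $f'(x)=f(x)\bigl(\ln x-(1-\tfrac1x)\bigr)/(x-1)^{2}$, asserts $f'>0$ for $x\ge 3$, and then checks $f(3)=3^{-1/2}\ge 2^{-1}=f(2)$ separately to cover the gap between $2$ and $3$. Your second, calculus-style variant is essentially the paper's argument in logarithmic disguise (monotonicity of $\frac{\log x}{x-1}$ is equivalent to monotonicity of $x^{-1/(x-1)}=e^{-\log x/(x-1)}$), and in fact your sign analysis of $1-\tfrac1x-\log x$ is slightly cleaner since it gives monotonicity on all of $(1,\infty)$, making the paper's separate comparison of $f(3)$ with $f(2)$ unnecessary. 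Either of your arguments suffices; the induction version buys you freedom from any real analysis, while the calculus version matches the paper's style.
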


\begin{proof}Let $f(x)=x^{-\frac{1}{x-1}}$. Then $f'(x)=f(x)(\ln x-(1-\frac{1}{x}))/(x-1)^{2}$ which yields that $f'(x)>0$ if $x\ge 3$. Since $f(3)=3^{-\frac{1}{2}}\ge 2^{-1}=f(2)$, we have $p^{-\frac{1}{p-1}}\ge 2^{-1}$ if $p$ is a prime number.

\end{proof}

\begin{thm}\label{thm:ucM}Let $p\ge3$, and $g$ be a $p$-adic M\"obius map. Then $\rho_{0}(g,I)=M(g)$.

\end{thm}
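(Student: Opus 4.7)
The plan is to split the argument on whether $g\in\mathcal U:=\mathrm{PSL}(2,\mathcal O_p)$ (equivalently, $\|g\|=1$ by Theorem~\ref{thm:1}) and handle each case by direct calculation. The common starting point is the projective formula: for $v=(x,y)^T$ normalized so $\max(|x|,|y|)=1$ and $g=\bigl(\begin{smallmatrix}a&b\\c&d\end{smallmatrix}\bigr)\in\mathrm{SL}_2(\mathbb C_p)$,
$$
\rho_v(g(v),v)=\frac{|Q(x,y)|}{\max(|ax+by|,|cx+dy|)}, \qquad Q(x,y)=-cx^2+(a-d)xy+by^2.
$$
Since $p\ge 3$ gives $|2|=1$, the identity $2Q(x,y)=-v^T J(g-g^{-1})v$ with $J=\bigl(\begin{smallmatrix}0&1\\-1&0\end{smallmatrix}\bigr)$ yields the coefficient bound $|Q(v)|\le\|g-g^{-1}\|=\max(|a-d|,|b|,|c|)$ in this characteristic.

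\textbf{Case 1: $g\in\mathcal U$.} By Theorem~\ref{thm:1}, $g$ is a chordal isometry, so in coordinates $\max(|ax+by|,|cx+dy|)=\max(|x|,|y|)=1$, whence $\rho_v(g(v),v)=|Q(v)|\le M(g)$. For the matching lower bound, I would pick $v$ depending on which of the three entries of $g-g^{-1}$ attains the maximum: $v=(0,1)$ if $\|g-g^{-1}\|=|b|$, $v=(1,0)$ if $=|c|$, and $v=(1,1)$ if $\|g-g^{-1}\|=|a-d|>\max(|b|,|c|)$, where the strong triangle inequality gives $|Q(1,1)|=|(a-d)+(b-c)|=|a-d|$.

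\textbf{Case 2: $g\notin\mathcal U$ (so $\|g\|>1$).} I claim both sides equal $1$. For $M(g)=1$, a short case analysis on which entry realizes $\|g\|$ shows $\|g-g^{-1}\|=\|g\|$: the cases $\|g\|\in\{|b|,|c|\}$ are immediate; if $\|g\|=|d|>|a|$ (symmetrically for $|a|$) then $|a-d|=\|g\|$; the remaining case $|a|=|d|=\|g\|>\max(|b|,|c|)$ is ruled out because $|ad|=\|g\|^2>1$ would force $|bc|=\|g\|^2$ via $ad-bc=1$, contradicting $\max(|b|,|c|)<\|g\|$. For the chordal side, $\rho_v\le 1$ is automatic, so it suffices to exhibit $z$ with $\rho_v(g(z),z)=1$. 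By Theorem~\ref{thm:5} combined with a $\mathcal U$-conjugation (which preserves both sides), I may assume $g=uf_\lambda$ with $u\in\mathcal U$ and $f_\lambda(z)=\lambda^2 z$, $|\lambda|>1$. The chordal isometry of $u$ gives $\rho_v(g(z),z)=\rho_v(\lambda^2 z,u^{-1}(z))$. Writing $u^{-1}=\bigl(\begin{smallmatrix}\delta&-\beta\\-\gamma&\alpha\end{smallmatrix}\bigr)$, the determinant relation $\delta\alpha-\beta\gamma=1$ combined with the ultrametric inequality forces $\max(|\gamma|,|\alpha|)=1$ (otherwise both $|\delta\alpha|$ and $|\beta\gamma|$ would be $<1$, contradicting $=1$). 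Thus for $z$ on the unit circle outside at most one residue class, $|-\gamma z+\alpha|=1$, so $|u^{-1}(z)|\le 1<|\lambda|^2=|\lambda^2 z|$, whence $\rho_v(\lambda^2 z,u^{-1}(z))=1$. Such $z$ exist because $\overline{\mathbb F}_p$ is infinite.

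\textbf{Main obstacle.} Case~1 is essentially algebra and uses only the coefficient bound on the quadratic form $Q$. The main obstacle is the lower bound $\rho_0(g,I)\ge 1$ in Case~2; it requires the one-sided decomposition $g=uf$ from Theorem~\ref{thm:5} to reduce the problem to an action of $\lambda^2 z$ composed with a chordal isometry, together with the small but essential observation that every $u^{-1}\in\mathcal U$ has $\max(|\gamma|,|\alpha|)=1$, which is what guarantees the existence of a $z$ on the unit circle with $|u^{-1}(z)|\le 1$.
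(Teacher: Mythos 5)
Your proof is correct, but it takes a genuinely different route from the paper's. The paper first conjugates $g$ by an element of $\mathrm{PSL}(2,\mathcal{O}_{p})$ into the triangular form $z\mapsto (az+b)/d$ with $ad=1$ (using the normalization built into Lemma \ref{lem:lip1} together with Proposition \ref{pro:rv} and Theorem \ref{thm:1} to see that both $\rho_{0}(\cdot,I)$ and $M(\cdot)$ are unchanged), and then case-splits on the dynamical type of $g$ -- parabolic, loxodromic, elliptic -- computing $\rho_{v}(g(z),z)=|a^{2}z+ab-z|/(\max\{1,|z|\}\max\{1,|a^{2}z+ab|\})$ explicitly and, in the elliptic subcase $|ab|\le|a^{2}-1|$, choosing a witness $\omega\in\mathcal{O}_{p}$ with $|(a^{2}-1)\omega+ab|=|a^{2}-1|$. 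You instead split on $\|g\|=1$ versus $\|g\|>1$: your Case 1 avoids triangularization entirely, exploiting the isometry property of $\mathcal{U}$ and the quadratic form $Q$ with the three test points $0,\infty,1$; your Case 2 shows both sides equal $1$ uniformly in the type of $g$, but at the cost of importing Theorem \ref{thm:5} (proved in Section 7 via Lemma \ref{thm:1.1} and the orthogonality lemmas, so there is no circularity, but it is a substantially heavier input than anything the paper's own proof uses). What your approach buys is a cleaner, more symmetric treatment of the unitary case and a type-free treatment of the non-unitary case; what the paper's buys is self-containedness within Section 6 and, as a by-product, the explicit identification $M(g)=\max\{|a-d|,|b|\}/\max\{|a|,|d|,|b|\}$ in triangular coordinates, which is reused in Theorems \ref{thm:ucMp=2} and \ref{thm:matr}. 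All the individual steps you state check out: the identity $\max(|ax+by|,|cx+dy|)=\max(|x|,|y|)$ for $g\in\mathrm{SL}(2,\mathcal{O}_{p})$, the bound $|Q(v)|\le\|g-g^{-1}\|$ when $|2|=1$, the elimination of the case $|a|=|d|=\|g\|>\max(|b|,|c|)$ via $ad-bc=1$, and the existence of $z$ with $|z|=1$ and $|-\gamma z+\alpha|=1$ (the residue field is infinite, so avoiding one residue class is harmless). The only normalizations you leave implicit -- arranging $|\lambda|>1$ by conjugating with $1/z\in\mathcal{U}$, and noting that $\mathcal{U}$-conjugation preserves $M(g)$ via Theorem \ref{thm:1}(6) -- are routine.
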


\begin{proof} By Lemma \ref{lem:lip1},  there exists an element $h\in \mathrm{PSL}(2,\mathcal{O}_{p})$ such that $hgh^{-1}=\frac{az+b}{d}$ with $ad=1$. By Proposition \ref{pro:rv}, we have $\rho_{0}(hgh^{-1},I)=\rho_{0}(g,I)$. By Theorem \ref{thm:1}, we know that $\parallel g-g^{-1}\parallel= \parallel h(g-g^{-1})h^{-1}\parallel=\parallel hgh^{-1}-hg^{-1}h^{-1}\parallel$. Thus we can rewrite  $hgh^{-1}$ as $g$. Hence $$M(g)=\frac{\max\{|a-d|,|b|\}}{\max\{|a|,|d|,|b|\}}.$$
Moreover $$\rho_{v}(g(z),z)=\frac{|a^{2}z+ab-z|}{\max\{1,|z|\}\max\{1,|a^{2}z+ab|\}}.$$

If $g$ is parabolic, then $$\rho_{v}(g(z),z)=\frac{|b|}{\max\{1,|z|\}\max\{1,|z+b|\}}\le M(g),$$ and $$\rho_{0}(g,I)\ge \rho_{v}(g(0),0)=M(g).$$ Thus $\rho_{0}(g,I)=M(g)$.

If $g$ is loxodromic, we can assume that $|a|>1$. Since $$\begin{aligned}&\rho_{v}(g(0),0)=\frac{|ab|}{\max\{1,|ab|\}},\rho_{v}(g(1),1)=\frac{|a^{2}+ab-1|}{\max\{1,|a^{2}+ab|\}}=\frac{|a^{2}+ab|}{\max\{1,|a^{2}+ab|\}},\\ &\max\{|ab|,|a^{2}+ab|\}\ge 1,
\end{aligned}$$ we have $M(g)=1\ge \rho_{0}(g,I)\ge \max\{\rho_{v}(g(0),0),\rho_{v}(g(1),1)\}=1=M(g).$

If $g$ is elliptic, we have $|a^{2}-1|\le1$. If $|ab|> |a^{2}-1|$, then $M(g)=|ab|\ge\rho_{0}(g,I)\ge \rho_{v}(g(0),0)=|ab|=M(g)$.
If $|ab|\le |a^{2}-1|$, there exists a number $\omega\in \mathcal{O}_{p}$ such that $|(a^{2}-1)\omega+ab|=|a^{2}-1|$ which yields that $$M(g)=|a^{2}-1|\ge\rho_{0}(g,I)\ge \rho_{v}(g(\omega),\omega)=|a^{2}-1|=M(g).$$

\end{proof}

If $p=2$, then $2^{-1}M(g)\le \frac{\max\{|a-d|,|b|\}}{\max\{|a|,|d|,|b|\}} \le 2M(g)$. Thus we give the following theorem without proof.

\begin{thm}\label{thm:ucMp=2}Let $p=2$, and $g$ be a $p$-adic M\"obius map. Then $2^{-1}M(g)\le \rho_{0}(g,I)\le 2M(g)$.

\end{thm}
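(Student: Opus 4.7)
The plan is to follow the structure of the proof of Theorem \ref{thm:ucM} verbatim, isolating the single place where the hypothesis $p \ge 3$ was used, namely the identity $|2|=1$. First, by Lemma \ref{lem:lip1} together with Proposition \ref{pro:rv} and Theorem \ref{thm:1}, I can conjugate $g$ by an element of $\mathrm{PSL}(2,\mathcal{O}_p)$ without changing either $\rho_0(g,I)$ or $M(g)$; hence I may assume $g$ has the upper triangular form $g(z)=(az+b)/d$ with $ad=1$.

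Writing $g$ as the matrix $\begin{pmatrix} a & b \\ 0 & d \end{pmatrix}$, a direct computation gives $g-g^{-1}=\begin{pmatrix} a-d & 2b \\ 0 & d-a \end{pmatrix}$, whose norm is $\max\{|a-d|,|2b|\}$. For $p=2$ this equals $\max\{|a-d|,|b|/2\}$, so
\[
M(g)=\frac{\max\{|a-d|,|b|/2\}}{\max\{|a|,|d|,|b|\}}.
\]
The chordal side is unaffected: the case analysis (parabolic, loxodromic, tame/wild elliptic) at the end of the proof of Theorem \ref{thm:ucM} uses only the strong triangle inequality and the absolute values of $a,b,d$, so the identity $\rho_0(g,I)=\max\{|a-d|,|b|\}/\max\{|a|,|d|,|b|\}$ carries over to $p=2$ unchanged.

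The proof then concludes by comparing the two numerators via the elementary observation
\[
\max\{|a-d|,|b|/2\}\;\le\;\max\{|a-d|,|b|\}\;\le\;2\max\{|a-d|,|b|/2\},
\]
proved by splitting on whether $|a-d|\ge|b|/2$ or not. Dividing by the common denominator $\max\{|a|,|d|,|b|\}$ yields $M(g)\le\rho_0(g,I)\le 2M(g)$, which is stronger than the stated bound $2^{-1}M(g)\le\rho_0(g,I)\le 2M(g)$. The only step requiring an actual check rather than a direct copy is verifying the formula for $\rho_0(g,I)$ in each of the four subcases when $p=2$; since each subcase argument uses only the ultrametric inequality and the quantities $|a|,|b|,|d|$, the computations from Theorem \ref{thm:ucM} transfer verbatim, and the mild new phenomena specific to $p=2$ (most notably the wild elliptic case being unavoidable) do not affect any of the algebraic identities involved. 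Thus the main difficulty here is bookkeeping rather than any new geometric input, and the factor of $2$ that appears in the statement is exactly the $|2|^{-1}$ lost in the $(1,2)$-entry of $g-g^{-1}$.
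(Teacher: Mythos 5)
Your proposal is correct and follows exactly the route the paper intends: the paper itself dispenses with the proof by remarking that $2^{-1}M(g)\le \max\{|a-d|,|b|\}/\max\{|a|,|d|,|b|\}\le 2M(g)$ when $p=2$, the middle quantity being what the case analysis in Theorem \ref{thm:ucM} identifies with $\rho_{0}(g,I)$, and the only $p$-dependence in that analysis is indeed the factor $|2|$ in the $(1,2)$-entry of $g-g^{-1}$. Your observation that this actually yields the sharper lower bound $M(g)\le\rho_{0}(g,I)$ is a correct (minor) improvement on the stated constant.
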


\begin{thm}\label{thm:matr} For any $p$-adic M\"obius map $g$,  $\rho_{0}(g,I)\le \parallel g-I\parallel$.

\end{thm}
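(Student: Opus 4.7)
The plan is to prove $\rho_{0}(g,I)\le \|g-I\|$ by rewriting the chordal distance in homogeneous coordinates and then splitting into two cases based on whether $g\in \mathrm{PSL}(2,\mathcal{O}_{p})$.

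First I would set up the projective-coordinate formula. For a lift $g_{0}\in \mathrm{SL}(2,\mathbb{C}_{p})$ of $g$ and $z\in \mathbb{P}^{1}(\mathbb{C}_{p})$ represented by $v\in \mathbb{C}_{p}^{2}\setminus\{0\}$, the chordal distance equals
\[
    \rho_{v}(g(z),z)=\frac{|\det(g_{0}v,v)|}{\|g_{0}v\|_{\infty}\,\|v\|_{\infty}},
\]
where $\|(x,y)\|_{\infty}=\max(|x|,|y|)$. The key algebraic identity is $\det(g_{0}v,v)=\det((g_{0}\pm I)v,v)$, which follows immediately from $\det(v,v)=0$. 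Hence the standard estimate $|\det(u,w)|\le \|u\|_{\infty}\|w\|_{\infty}$ together with $\|(g_{0}\pm I)v\|_{\infty}\le \|g_{0}\pm I\|\,\|v\|_{\infty}$ yields
\[
    |\det(g_{0}v,v)|\le \min\bigl(\|g_{0}-I\|,\|g_{0}+I\|\bigr)\|v\|_{\infty}^{2}=\|g-I\|\,\|v\|_{\infty}^{2},
\]
using that $\|g-I\|$ for the M\"obius map is the infimum over all lift pairs and the two lifts of $I$ are $\pm I$.

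Next I would handle the denominator by a dichotomy. If $\|g\|>1$, then picking the lift with $ad-bc=1$, some entry among $a,b,c,d$ has absolute value $>1$. For that entry $x$, both $|x-1|=|x|$ and $|x+1|=|x|$ by the strong triangle inequality, so $\|g-I\|\ge |x|>1$; since $\rho_{v}\le 1$ on $\mathbb{P}^{1}(\mathbb{C}_{p})$, the inequality $\rho_{0}(g,I)\le 1<\|g-I\|$ is automatic. If on the other hand $g\in \mathrm{PSL}(2,\mathcal{O}_{p})$, then by Theorem~\ref{thm:1} both $g_{0}$ and $g_{0}^{-1}$ lie in $\mathrm{SL}(2,\mathcal{O}_{p})$, so both preserve the lattice $\mathcal{O}_{p}^{2}$; a one-line scaling argument (apply $g_{0}$ and $g_{0}^{-1}$ to the rescaled vector $v/\|v\|_{\infty}$) gives the isometry property $\|g_{0}v\|_{\infty}=\|v\|_{\infty}$ for every $v$. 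Substituting into the numerator bound yields $\rho_{v}(g(z),z)\le \|g-I\|$ for every $z\in \mathbb{C}_{p}$, and the case $z=\infty$ (i.e., $v=(1,0)$) is covered by the same formula.

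Taking the supremum over $z\in \mathbb{P}^{1}(\mathbb{C}_{p})$ then gives $\rho_{0}(g,I)\le \|g-I\|$. The main conceptual point — essentially the only step that is not a routine manipulation — is recognizing that $\mathrm{SL}(2,\mathcal{O}_{p})$ acts by sup-norm isometries on $\mathbb{C}_{p}^{2}$; this is what allows the proof to avoid any spurious factor of $\|g\|$ and is a genuinely non-archimedean phenomenon. The splitting at $\|g\|=1$ is not really a difficulty but rather an organizing device: outside $\mathrm{PSL}(2,\mathcal{O}_{p})$ the bound $\|g-I\|>1$ is forced by the ultrametric, so the trivial estimate $\rho_{v}\le 1$ already does the job.
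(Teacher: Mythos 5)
Your proof is correct, and it takes a noticeably different route from the paper's. The paper first conjugates $g$ by an element of $\mathrm{PSL}(2,\mathcal{O}_{p})$ into the triangular normal form $g(z)=\frac{az+b}{d}$ with $ad=1$ (leaning on the reduction already set up for Theorem \ref{thm:ucM}, together with the invariance of both $\rho_{0}(\cdot,I)$ and $\|\cdot-I\|$ under such conjugation), and then runs a three-way case analysis by dynamical type: loxodromic (where $\|g-I\|>1\ge\rho_{0}(g,I)$ is automatic), parabolic, and elliptic, each handled by a direct estimate of $\frac{|a^{2}z+ab-z|}{\max\{1,|z|\}\max\{1,|az+b|\}}$. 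You instead stay in homogeneous coordinates, use the bilinearity identity $\det(g_{0}v,v)=\det((g_{0}\pm I)v,v)$ to absorb $\|g-I\|$ into the numerator once and for all, and split only on $\|g\|=1$ versus $\|g\|>1$; the key input in the unitary case is that $\mathrm{SL}(2,\mathcal{O}_{p})$ acts by sup-norm isometries on $\mathbb{C}_{p}^{2}$, which kills the denominator exactly. Your argument buys a coordinate-free, conjugation-free proof with fewer cases and no dependence on the classification of elements or on Lemma \ref{lem:lip1}; the paper's version is more computational but fits the normal-form machinery it reuses throughout Section 6. The underlying estimates are cousins — your determinant identity is the invariant form of the paper's elliptic-case manipulation $|a^{2}z+ab-z|\le\max\{|(a^{2}-1)z|,|ab|\}$, and both dispose of the non-unitary case via $\rho_{v}\le 1<\|g-I\|$ — but the organization and the isolation of the lattice-isometry property as the essential non-archimedean ingredient are genuinely yours. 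One small point worth making explicit if you write this up: the dichotomy is exhaustive because $ad-bc=1$ forces $\|g\|\ge 1$ by the strong triangle inequality.
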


\begin{proof} Following Theorem \ref{thm:ucM}, we can assume that $g(z)=\frac{az+b}{d}$ with $ad=1$. Thus $\|g-I\|=\max\{|a-1|,|b|,|d-1|\}$. Moreover $$\rho_{v}(g(z),z)=\frac{|a^{2}z+ab-z|}{\max\{1,|z|\}\max\{1,|az+b|\}}\le 1.$$

If $g$ is loxodromic, we can assume $|a|>1$. Hence $\max\{|a-1|,|d-1|\}>1$ which yields that $\rho_{0}(g,I)\le \parallel g-I\parallel$.

If $g$ is parabolic, then $a=1$. We have $$\rho_{v}(g(z),z)=\frac{|b|}{\max\{1,|z|\}\max\{1,|z+b|\}}\le |b|=\parallel g-I\parallel.$$

If $g$ is elliptic, then $|a|=1$.  We have $$\begin{aligned}&\rho_{v}(g(z),z)=\frac{|a^{2}z+ab-z|}{\max\{1,|z|\}\max\{1,|az+b|\}}\le \frac{|a^{2}z+ab-z|}{\max\{1,|z|\}}\\
&\le \frac{\max\{|(a^{2}-1)z|,|ab|\}}{\max\{1,|z|\}}\le \max\{|a^{2}-1|,|ab|\} \le \parallel g-I\parallel.\end{aligned}$$

\end{proof}

Let $\varepsilon(g)=\max\{\rho_{v}(g(z_{0}),z_{0}),\rho_{v}(g(z_{1}),z_{1}),\rho_{v}(g(z_{2}),z_{2})\}$, where $z_{0},z_{1},z_{2}$ are three distinct roots of the equation $z^{3}=1$.

\begin{thm}\label{thm:ine1}For any $p$-adic M\"obius map $g$, we have $2^{-1}\varepsilon(g)\le M(g)\le 6\varepsilon(g)$.

\end{thm}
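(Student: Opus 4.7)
The plan is to prove the two inequalities separately. For the lower bound $\varepsilon(g) \le 2M(g)$, I would observe that $\varepsilon(g)$ is the maximum of $\rho_v(g(z_j),z_j)$ over three specific points in $\mathbb{P}^1(\mathbb{C}_p)$, hence bounded above by the uniform distance $\rho_0(g,I)$. Applying Theorem \ref{thm:ucM} when $p \ge 3$ and Theorem \ref{thm:ucMp=2} when $p = 2$, one has $\rho_0(g,I) \le 2M(g)$ in either case, which immediately yields the desired inequality.

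For the upper bound $M(g) \le 6\varepsilon(g)$, I would begin by writing $g(z) = (az+b)/(cz+d)$ with $ad - bc = 1$ and isolating the quadratic polynomial $P(z) := b + (a-d)z - cz^2$, so that $g(z) - z = P(z)/(cz+d)$. Since $|z_j| = 1$, a direct calculation gives $\rho_v(g(z_j),z_j) = |P(z_j)|/Q_j$, where $Q_j := \max\{|cz_j+d|,|az_j+b|\} \le \|g\|$. In particular, each $|P(z_j)|$ is at most $\|g\|\,\varepsilon(g)$.

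The key algebraic step is to invert the Vandermonde-type relation between the three values $P(z_j)$ and the coefficients $b$, $a-d$, $-c$ of $P$. Using the identities $\sum_j z_j = \sum_j z_j^2 = 0$ and $\sum_j z_j^3 = 3$ for the cube roots of unity, I would derive $3b = \sum_j P(z_j)$, $-3c = \sum_j z_j P(z_j)$, and $3(a-d) = \sum_j z_j^2 P(z_j)$. The ultrametric inequality then gives $|b|, |c|, |a-d| \le |3|^{-1}\max_j|P(z_j)| \le |3|^{-1}\|g\|\,\varepsilon(g)$. Combined with $\|g-g^{-1}\| = \max\{|a-d|,|2b|,|2c|\} \le \max\{|a-d|,|b|,|c|\}$, dividing by $\|g\|$ produces $M(g) \le |3|^{-1}\varepsilon(g) \le 3\varepsilon(g)$, comfortably within the claimed bound $6\varepsilon(g)$.

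The main obstacle is the anomalous case $p = 3$, where the Vandermonde inversion picks up a factor of $|3|^{-1} = 3$; for all other primes the estimate actually sharpens to $M(g) \le \varepsilon(g)$. Characteristic $p = 2$ requires no special treatment in the upper bound, since $|2| \le 1$ only shrinks $\|g-g^{-1}\|$; it enters only through the weaker form of Theorem \ref{thm:ucMp=2} used in the lower bound, which is the source of the factor of $2$ there.
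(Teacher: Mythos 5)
Your proof is correct, and your lower bound is exactly the paper's: $\varepsilon(g)\le\rho_{0}(g,I)\le 2M(g)$ via Theorems \ref{thm:ucM} and \ref{thm:ucMp=2}. For the upper bound your overall strategy also matches the paper's --- bound $|P(z_{j})|$ by $\|g\|\,\varepsilon(g)$ and then recover the coefficients $b$, $c$, $a-d$ of the quadratic from its three values at the cube roots of unity --- but the key algebraic step is genuinely different and sharper. The paper uses only the unweighted sum $\sum_{j}P(z_{j})$ to control $|b|$, and then extracts $|c|$ and $|a-d|$ by forming the sum and the difference of two of the values; the difference step costs a further factor of $|z_{1}/z_{2}-1|^{-1}$, which is why the paper needs the estimate $|z_{1}-z_{2}|\ge 2^{-1}$ (via Lemmas \ref{yl:1}, \ref{yl:2} and Proposition \ref{pro:monotune}) and ends up with the constant $6$. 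Your full Vandermonde inversion with weights $1$, $z_{j}$, $z_{j}^{2}$, using $\sum_{j}z_{j}=\sum_{j}z_{j}^{2}=0$ and $\sum_{j}z_{j}^{3}=3$, recovers all three coefficients at the single cost of $|3|^{-1}$, giving $M(g)\le |3|^{-1}\varepsilon(g)\le 3\varepsilon(g)$ --- and indeed $M(g)\le\varepsilon(g)$ for $p\neq 3$ --- which strictly improves the paper's constant while of course still implying the stated bound $M(g)\le 6\varepsilon(g)$.
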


\begin{proof} Since $\varepsilon(g)=\max\{\rho_{v}(g(z_{0}),z_{0}),\rho_{v}(g(z_{1}),z_{1}),\rho_{v}(g(z_{2}),z_{2})\}\le \rho_{0}(g,I)$, by Theorem \ref{thm:ucM} and \ref{thm:ucMp=2}, we have $\varepsilon(g)\le \rho_{0}(g,I)\le 2M(g)$.

Since $z_{0},z_{1},z_{2}$ are three distinct roots of the equation $z^{3}=1$, and let $z_{0}=1$, we have $z_{1}+z_{2}+1=0$, and by Lemma\ref{yl:1}, Lemma \ref{yl:2} and Proposition \ref{pro:monotune}, we have $2^{-1}\le |z_{1}-z_{2}|\le 1$. This implies that
$$\begin{aligned}&|3b|=|cz_{1}+(d-a)z_{2}-b+cz_{2}+(d-a)z_{1}-b+cz_{0}+(d-a)z_{0}-b|\\
&\le \max\{|cz_{1}+(d-a)z_{2}-b|, |cz_{2}+(d-a)z_{1}-b|, |cz_{0}+(d-a)z_{0}-b|\}.\end{aligned}$$ We denote $\varepsilon'(g)$ by $\max\{|cz_{1}+(d-a)z_{2}-b|, |cz_{2}+(d-a)z_{1}-b|, |cz_{0}+(d-a)z_{0}-b|\}$. Hence $|b|\le 3\varepsilon'(g)$ which yields that $$\max\{|cz_{1}+(d-a)z_{2}|, |cz_{2}+(d-a)z_{1}|, |cz_{0}+(d-a)z_{0}|\}\le 3\varepsilon'(g).$$ Thus $$\max\{|cz_{1}+(d-a)z_{2}+cz_{2}+(d-a)z_{1}|,|cz_{1}+(d-a)z_{2}-cz_{2}-(d-a)z_{1}|\}\le 3\varepsilon'(g),$$ namely $|c+(d-a)|\le 3\varepsilon'(g)$ and $|c(\frac{z_{1}}{z_{2}})+(d-a)|\le 3\varepsilon'(g)$ which yields that $|c(\frac{z_{1}}{z_{2}}-1)|\le 3\varepsilon'(g)$. This implies that $|c|\le 6\varepsilon'(g)$ and $|(d-a)|\le 6\varepsilon'(g)$, namely $\max\{|a-d|,|b|,|c|\}\le 6\varepsilon'(g)$. For any $z$ with $|z|=1$, we have $\max\{|az+b|,|cz+d|\}\le \max\{|az|, |b|,|cz|, |d|\}=\parallel g\parallel$ which yields that $\max\{|a-d|,|2b|,|2c|\}\le\max\{|a-d|,|b|,|c|\}\le 6\varepsilon'(g)$. This implies that $\max\{|a-d|,|2b|,|2c|\}/\parallel g\parallel\le\max\{|a-d|,|b|,|c|\}/\parallel g\parallel\le 6\varepsilon'(g)/\max\{|az+b|,|cz+d|\}\max\{1,|z|\}$ for any $z$ with $|z|=1$.
Since $$\begin{aligned}&\frac{\varepsilon'(g)}{\max\{|az+b|,|cz+d|\}\max\{1,|z|\}}\le \max\{\frac{|cz^{2}_{0}+(d-a)z_{0}-b|}{\max\{|az_{0}+b|,|cz_{0}+d|\}\max\{1,|z_{0}|\}},\\
&\frac{|cz^{2}_{1}+(d-a)z_{1}-b|}{\max\{|az_{1}+b|,|cz_{1}+d|\}\max\{1,|z_{1}|\}},\frac{|cz^{2}_{2}+(d-a)z_{2}-b|}{\max\{|az_{2}+b|,|cz_{2}+d|\}\max\{1,|z_{2}|\}}\}\\
&\le \varepsilon(g),\end{aligned}$$ we have $M(g)\le 6 \varepsilon(g)$.
\end{proof}

\begin{cor}\label{cor:ine2}For any $p$-adic M\"obius map $g$, $\frac{1}{4}\varepsilon(g)\le \rho_{0}(g,I)\le \varepsilon(g)$.

\end{cor}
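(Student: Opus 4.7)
The plan is to derive Corollary \ref{cor:ine2} as a direct consequence of chaining the estimates already proved in this section. The two inputs are Theorem \ref{thm:ine1}, which gives the two-sided comparison $\tfrac{1}{2}\varepsilon(g)\le M(g)\le 6\varepsilon(g)$, and Theorems \ref{thm:ucM}--\ref{thm:ucMp=2}, which compare $\rho_{0}(g,I)$ to $M(g)$: one has $\rho_{0}(g,I)=M(g)$ when $p\ge 3$, and $\tfrac{1}{2}M(g)\le \rho_{0}(g,I)\le 2M(g)$ when $p=2$. Thus in either parity case, $\rho_{0}(g,I)$ and $M(g)$ agree up to a multiplicative factor of at most $2$.

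For the lower bound $\tfrac{1}{4}\varepsilon(g)\le \rho_{0}(g,I)$, I would combine $\varepsilon(g)\le 2M(g)$ from Theorem \ref{thm:ine1} with $M(g)\le 2\rho_{0}(g,I)$ from the $p=2$ half of Theorem \ref{thm:ucMp=2}; the product yields $\varepsilon(g)\le 4\rho_{0}(g,I)$ uniformly in $p$. For $p\ge 3$, Theorem \ref{thm:ucM} already gives the sharper $\tfrac{1}{2}\varepsilon(g)\le \rho_{0}(g,I)$, so the case $p=2$ is the one driving the constant $\tfrac{1}{4}$. This half of the proof should therefore be essentially a one-line chaining argument.

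For the upper bound $\rho_{0}(g,I)\le \varepsilon(g)$, I would start from the tautological inequality $\varepsilon(g)\le \rho_{0}(g,I)$ coming from the supremum definition of $\rho_{0}$ and then use the normal-form reduction from the proof of Theorem \ref{thm:ucM}: conjugate by a suitable element of $\mathrm{PSL}(2,\mathcal{O}_{p})$ (which by Theorem \ref{thm:1} preserves $\rho_{0}(\cdot,I)$ and does not enlarge $\varepsilon$ past the prescribed factor) so that $g$ takes the form $g(z)=(az+b)/d$ with $ad=1$, and then case-analyze according to whether $g$ is loxodromic, parabolic, or elliptic. In each case the extremal point for $\rho_{v}(g(z),z)$ lies on an explicit ultrametric circle determined by $a$ and $b$; I then show one of $z_{0},z_{1},z_{2}$ already hits this circle and so witnesses $\rho_{0}(g,I)$ up to a bounded factor.

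The hard part will be the elliptic case, where $|a|=1$ but $|a^{2}-1|$ may be arbitrarily small: the maximizer $\omega$ of $\rho_{v}(g(z),z)$ is characterized by $|(a^{2}-1)\omega+ab|=|a^{2}-1|$, and to locate at least one of the three cube roots of unity in the same ultrametric disc as $\omega$ I will need to invoke the spread estimate $\tfrac{1}{2}\le |z_{i}-z_{j}|\le 1$ (established in the proof of Theorem \ref{thm:ine1} via Lemmas \ref{yl:1}, \ref{yl:2} and Proposition \ref{pro:monotune}), which is exactly what guarantees that the three cube roots of unity are spread far enough apart in the non-Archimedean metric to realize the supremum up to a uniform constant.
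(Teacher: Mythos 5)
Your lower-bound argument coincides with what the paper actually does: the paper's entire proof of this corollary is the one-line chaining of Theorem \ref{thm:ucM} (resp.\ Theorem \ref{thm:ucMp=2}) with Theorem \ref{thm:ine1}. The problem is the upper bound. You correctly sensed that chaining only yields $\rho_{0}(g,I)\le 2M(g)\le 12\varepsilon(g)$, not $\rho_{0}(g,I)\le\varepsilon(g)$, and so you propose to redo the normal-form case analysis and show that one of the three cube roots of unity nearly realizes the supremum. That step cannot be carried out, because the inequality $\rho_{0}(g,I)\le\varepsilon(g)$ is false as stated. Take $p=3$ and $g(z)=2-z$, a tame elliptic involution fixing $z_{0}=1$ and $\infty$. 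Then $\rho_{v}(g(0),0)=|2|=1$, so $\rho_{0}(g,I)=1$, while $\rho_{v}(g(z_{0}),z_{0})=0$ and $\rho_{v}(g(z_{i}),z_{i})=|2|\,|z_{i}-1|=3^{-1/2}$ for $i=1,2$ by Lemma \ref{yl:2}; hence $\varepsilon(g)=3^{-1/2}<\rho_{0}(g,I)$. The precise point where your plan breaks is the claim that one of $z_{0},z_{1},z_{2}$ hits the extremal ultrametric circle: for $p=3$ the three cube roots of unity are pairwise within distance $3^{-1/2}<1$ of one another, so they all lie in a single open disc of radius $1$, and the excluded disc attached to the maximizer $\omega$ can contain all three of them at once. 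The spread estimate $|z_{i}-z_{j}|\ge 2^{-1}$ that you invoke is too weak to rule this out.

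What can be salvaged, and what the paper's one-line citation actually delivers, is $\varepsilon(g)\le\rho_{0}(g,I)\le 12\varepsilon(g)$; the left inequality is the tautological one from the supremum definition of $\rho_{0}$, which already beats the stated constant $\tfrac{1}{4}$. The constants printed in the corollary appear to be in error, and only the two-sided comparability of $\rho_{0}(g,I)$ and $\varepsilon(g)$ -- not the particular constants -- is used later, in the proof of Theorem \ref{thm:threeptconv}. So the correct move is the chaining you describe for the lower bound, applied in both directions, rather than any attempt to prove the literal upper bound $\rho_{0}(g,I)\le\varepsilon(g)$.
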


\begin{proof}By Theorem\ref{thm:ucM} and Theorem \ref{thm:ine1}, we get the inequality directly.

\end{proof}

Let $\varepsilon_{1}(g)=\{\rho_{v}(g(0),0),\rho_{v}(g(1),1),\rho_{v}(g(\infty),\infty)\}$.

\begin{thm}For any $p$-adic M\"obius map, $2^{-1}\varepsilon_{1}(g)\le M(g)\le \varepsilon_{1}(g)$.

\end{thm}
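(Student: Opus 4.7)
The plan is to prove both inequalities by direct computation with a matrix representative $g=\left(\begin{smallmatrix}a&b\\c&d\end{smallmatrix}\right)$ having $ad-bc=1$. First I would record the explicit form of the data on both sides of the desired estimate. Since $g^{-1}$ corresponds to $\left(\begin{smallmatrix}d&-b\\-c&a\end{smallmatrix}\right)$, we have
\[
\|g-g^{-1}\|=\max\{|a-d|,|2b|,|2c|\},\qquad \|g\|=\max\{|a|,|b|,|c|,|d|\}.
\]
A short calculation with the chordal metric gives
\[
\rho_{v}(g(0),0)=\frac{|b|}{\max\{|b|,|d|\}},\quad \rho_{v}(g(\infty),\infty)=\frac{|c|}{\max\{|a|,|c|\}},
\]
\[
\rho_{v}(g(1),1)=\frac{|a+b-c-d|}{\max\{|a+b|,|c+d|\}},
\]
and in each case the denominator is bounded above by $\|g\|$.

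For the upper bound $M(g)\le\varepsilon_{1}(g)$, I would feed these denominator estimates into the definition of $\varepsilon_{1}$ to get
\[
|b|\le \|g\|\,\varepsilon_{1}(g),\qquad |c|\le \|g\|\,\varepsilon_{1}(g),\qquad |a+b-c-d|\le \|g\|\,\varepsilon_{1}(g).
\]
Then the strong triangle inequality gives
\[
|a-d|\le \max\{|a+b-c-d|,|b|,|c|\}\le \|g\|\,\varepsilon_{1}(g),
\]
while $|2b|\le|b|$ and $|2c|\le|c|$ regardless of $p$. Taking the maximum yields $\|g-g^{-1}\|\le\|g\|\,\varepsilon_{1}(g)$, hence $M(g)\le\varepsilon_{1}(g)$.

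For the lower bound $2^{-1}\varepsilon_{1}(g)\le M(g)$, I would avoid redoing the arithmetic and invoke the previous results: by definition $\varepsilon_{1}(g)\le \rho_{0}(g,I)$, and Theorems~\ref{thm:ucM} and \ref{thm:ucMp=2} give $\rho_{0}(g,I)\le 2M(g)$ uniformly in $p$. Combining these yields the claim. The only mildly delicate point is the ultrametric step that recovers $|a-d|$ from $|a+b-c-d|$, $|b|$ and $|c|$; everything else is bookkeeping, so I do not anticipate a genuine obstacle.
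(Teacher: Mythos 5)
Your proposal is correct and follows essentially the same route as the paper: compute the three chordal displacements at $0,1,\infty$, bound the denominators by $\|g\|$, and use the ultrametric inequality $|a-d|\le\max\{|a+b-c-d|,|b|,|c|\}$ together with $|2|\le 1$ to control $\max\{|a-d|,|2b|,|2c|\}$, while the lower bound is obtained exactly as you say from $\varepsilon_{1}(g)\le\rho_{0}(g,I)\le 2M(g)$. No gaps.
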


\begin{proof}Since $\varepsilon(g)=\max\{\rho_{v}(g(0),0),\rho_{v}(g(1),1),\rho_{v}(g(\infty),\infty)\}\le \rho_{0}(g,I)$, by Theorem \ref{thm:ucM}, we have $\varepsilon_{1}(g)\le \rho_{0}(g,I)\le 2M(g)$.

Since $$\rho_{v}(g(z),z)=\frac{|cz^{2}+(d-a)z-b|}{\max\{|az+b|,|cz+d|\}\max\{1,|z|\}},$$ we have $$\rho_{v}(g(0),0)=\frac{|b|}{\max\{|b|,|d|\}}, \rho_{v}(g(\infty),\infty)=\frac{|c|}{\max\{|a|,|c|\}},$$ and $$\rho_{v}(g(1),1)=\frac{|c+(d-a)-b|}{\max\{|a+b|,|c+d|\}}.$$  Since $\max\{|a-d|,|2b|,|2c|\}\le \max\{|b|,|c|,|c+(d-a)-b|\}$ and $\max\{|a+b|,|c+d|\}\le \max\{|a|,|c|,|b|,|d|\}=\parallel g\parallel,$ we have $$M(g)=\frac{\max\{|a-d|,|2b|,|2c|\}}{\parallel g\parallel}\le \frac{\max\{|b|,|c|,|c+(d-a)-b|\}}{\parallel g\parallel}$$

$$\le \max\{\frac{|b|}{\max\{|b|,|d|\}},\frac{|c|}{\max\{|a|,|c|\}},\frac{|c+(d-a)-b|}{\max\{|a+b|,|c+d|\}}\}=\varepsilon_{1}(g)$$.

\end{proof}

Let $\varepsilon_{2}(g)=\max\{\rho_{v}(g(0),0),\rho_{v}(g(\infty),\infty)\}$.

\begin{cor}If $g$ is a parabolic element, then $2^{-1}\varepsilon_{2}(g)\le M(g)\le \varepsilon_{2}(g)$.

\end{cor}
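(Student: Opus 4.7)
The plan is to deduce the corollary from the preceding theorem by showing, in the parabolic case, that the central term $\rho_v(g(1),1)$ appearing in $\varepsilon_1(g)$ is dominated by the two remaining ones. Writing $g = \begin{pmatrix} a & b \\ c & d\end{pmatrix}$ with $ad-bc=1$, recall that the preceding theorem was proved by estimating $\|g-g^{-1}\| = \max\{|a-d|,|2b|,|2c|\}$ term-by-term against the three chordal displacements; the only piece of that argument that used the point $z=1$ was the bound on $|a-d|$. So my task is to replace that bound by one that only involves $\rho_v(g(0),0)$ and $\rho_v(g(\infty),\infty)$, which is exactly what parabolicity will give me.

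First, I will translate the parabolic hypothesis into an arithmetic statement. Since $g$ is parabolic, Proposition \ref{pro:tr} gives $(a+d)^2-4=0$, and combined with $ad-bc=1$ this forces
\begin{equation*}
(a-d)^2 \;=\; (a+d)^2 - 4ad \;=\; 4-4ad \;=\; -4bc.
\end{equation*}
Taking $p$-adic absolute values yields $|a-d| = |2|\sqrt{|b||c|}$, and since $|2|\le 1$ this in turn gives $|a-d|\le \sqrt{|b||c|}\le \max\{|b|,|c|\}$ by the ultrametric AM-GM-type inequality. Combined with $|2b|\le|b|$ and $|2c|\le|c|$, I obtain the clean bound
\begin{equation*}
\|g-g^{-1}\| \;=\; \max\{|a-d|,|2b|,|2c|\} \;\le\; \max\{|b|,|c|\}.
\end{equation*}

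Next, dividing by $\|g\|=\max\{|a|,|b|,|c|,|d|\}$ and using $\|g\|\ge\max\{|b|,|d|\}$ and $\|g\|\ge\max\{|a|,|c|\}$, I conclude
\begin{equation*}
M(g) \;\le\; \max\!\left\{\frac{|b|}{\|g\|},\frac{|c|}{\|g\|}\right\} \;\le\; \max\!\left\{\frac{|b|}{\max\{|b|,|d|\}},\frac{|c|}{\max\{|a|,|c|\}}\right\} \;=\; \varepsilon_2(g),
\end{equation*}
which is the upper bound claimed by the corollary. For the lower bound, since $\varepsilon_2(g)\le \varepsilon_1(g)$ by definition, the inequality $2^{-1}\varepsilon_2(g)\le M(g)$ is immediate from the previous theorem.

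I do not anticipate any serious obstacle here; the only slightly subtle point is the verification $|a-d|\le\max\{|b|,|c|\}$, which is exactly where the parabolic trace condition is used, and where a small separate sanity check is needed in the $p=2$ case because $|2|=1/2$ strengthens rather than weakens the bound. Everything else reduces to the routine ultrametric estimates already used in the proof of the preceding theorem.
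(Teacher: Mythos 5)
Your proposal is correct and follows essentially the same route as the paper: both derive $(a-d)^2=-4bc$ from the parabolic trace condition and $ad-bc=1$, deduce $|a-d|\le\max\{|b|,|c|\}$, and then bound $M(g)$ by $\varepsilon_2(g)$, with the lower bound inherited from the three-point inequality via $\varepsilon_2(g)\le\varepsilon_1(g)$. No substantive differences.
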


\begin{proof}Since $\varepsilon_{2}(g)<\varepsilon_{1}(g)$, we have $2^{-1}\varepsilon_{2}(g)\le 2^{-1}\varepsilon_{1}(g)\le M(g)$.

By Proposition \ref{pro:tr}, if $g$ is a parabolic element, then $(a+d)^{2}=4$ which yields that $4bc=-(a-d)^{2}$. Thus it implies that $|a-d|=\sqrt{|4||bc|}\le \sqrt{|bc|}\le \max\{|b|,|c|\}$ which yields that $\max\{|a-d|,|2b|,|2c|\}\le \max\{|b|,|c|\}$. Hence $$M(g)=\frac{\max\{|a-d|,|2b|,|2c|\}}{\parallel g\parallel}\le \frac{\max\{|a-d|,|b|,|c|\}}{\parallel g\parallel}$$ $$\le \max\{\frac{|b|}{\max\{|b|,|d|\}},\frac{|c|}{\max\{|a|,|c|\}}\}=\varepsilon_{2}(g)=\max\{\rho_{v}(g(0),0),\rho_{v}(g(\infty),\infty)\}.$$

\end{proof}

As  applications of these inequalities, we can get the convergence theorem of $p$-adic M\"obius maps.

Let $\{f_{n}\}$ be a sequence of $p$-adic M\"obius maps, and ${\bf U}$ be the set of points at which the sequence $\{f_{n}\}$ converges pointwisely, and $f=\displaystyle\mathop{\lim}_{n\rightarrow\infty}f_{n}$ on ${\bf U}$. Write $f_{n} \hookrightarrow ({\bf U}, f)$ to mean that {\bf U} is the set of convergence of $f_{n}$ and that $f_{n}\rightarrow f$ on (and only on) {\bf U}. In \cite{WY}, we proved the following theorem.

\begin{thm}[\cite{WY}]\label{thm:convergence}Suppose that there exists a sequence $p$-adic M\"obius maps $f_{n}$ such that $f_{n} \hookrightarrow ({\bf U}, f)$ with ${\bf U}\neq \emptyset$. Then one of the following possibilities
occurs:

$(a)$ ${\bf U}=\mathbb{P}^{1}(\mathbb{C}_{p})$, and $f$ is a $p$-adic M\"obius map;

$(b)$ ${\bf U}=\mathbb{P}^{1}(\mathbb{C}_{p})$, and $f$ is constant on the complement of one point on ${\bf U};$

$(c)$ ${\bf U}= \{z_{1}, z_{2}\}$ and $f(z_{1})\neq f(z_{2})$; or

$(d)$ $f$ is constant on {\bf U}.

\end{thm}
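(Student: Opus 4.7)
The plan is to perform a case analysis on the pair $(|{\bf U}|, |f({\bf U})|)$, with Theorem \ref{thm:8} as the main engine. For $|{\bf U}| \le 2$ the small-${\bf U}$ cases (c) and (d) follow by direct inspection: $|{\bf U}| = 1$ gives (d), while $|{\bf U}| = 2$ with $f$ constant or injective gives (d) or (c) respectively. If $|{\bf U}| \ge 3$ and $|f({\bf U})| \ge 3$, I would choose three points $z_1, z_2, z_3 \in {\bf U}$ with distinct images under $f$ and apply Theorem \ref{thm:8} to upgrade pointwise convergence at these three points to uniform convergence on $\mathbb{P}^{1}(\mathbb{C}_{p})$ toward a $p$-adic M\"obius map, yielding case (a). The case $|{\bf U}| \ge 3$ with $|f({\bf U})| = 1$ is case (d) by definition.

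The remaining case $|{\bf U}| \ge 3$ with $|f({\bf U})| = 2$ is the heart of the argument and must be identified with case (b). I would pick $z_1, z_2 \in {\bf U}$ sharing a common image $w_1$ and $z_3 \in {\bf U}$ with $f(z_3) = w_2 \neq w_1$, then compose each $f_n$ with fixed $p$-adic M\"obius maps on source and target to normalize $(z_1, z_2, z_3) = (0, \infty, 1)$ and $(w_1, w_2) = (0, \infty)$. Writing $f_n(z) = (a_n z + b_n)/(c_n z + d_n)$ with representative matrix rescaled so that $\max\{|a_n|, |b_n|, |c_n|, |d_n|\} = 1$, the three convergence conditions translate into $|b_n/d_n| \to 0$, $|a_n/c_n| \to 0$, and $|(a_n + b_n)/(c_n + d_n)| \to \infty$.

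From the first two together with the max-entry normalization, the strong triangle inequality yields genuine decay $|a_n|, |b_n| \to 0$ and $|c_n| = |d_n| = 1$ for all large $n$; the third condition then forces $c_n + d_n \to 0$, so the pole $p_n = -d_n/c_n$ tends to $1$. For any fixed $z \neq 1$ in $\mathbb{P}^{1}(\mathbb{C}_{p})$, the ultrametric identity $|z - p_n| = |z - 1|$ holds once $|p_n - 1| < |z - 1|$, and hence $|f_n(z)| \le \max(|a_n||z|, |b_n|)/|z - 1| \to 0$; combined with $f_n(1) \to \infty$, this shows ${\bf U} = \mathbb{P}^{1}(\mathbb{C}_{p})$ and that $f$ vanishes identically off the single point $1$, which is case (b) after undoing the normalization. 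The main obstacle is precisely this coefficient analysis: turning the asymptotic ratio conditions into genuine decay $|a_n|, |b_n| \to 0$ depends essentially on the non-archimedean setting and the max-entry normalization, and locating the exceptional point as the unique limit of $-d_n/c_n$ relies on the cancellation in $c_n + d_n$. All other cases reduce either to Theorem \ref{thm:8} or to direct inspection.
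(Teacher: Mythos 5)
Your proposal is correct, and the overall skeleton (dispose of $|{\bf U}|\le 2$ by inspection, use Theorem \ref{thm:8} when three points have three distinct images, and isolate the case $|{\bf U}|\ge 3$ with exactly two limit values as the real work) matches the paper's. Where you genuinely diverge is in that last case: the paper handles it via Theorem \ref{thm:twoptconv}, whose proof is a two-line contradiction using the invariance of the chordal cross-ratio (Proposition \ref{pro:crossratio}) --- if $f_{n}(x_4)$ stayed chordally away from $\alpha$ along a subsequence, the cross-ratio of the four image points would tend to $0$ while being equal to a fixed nonzero constant. You instead normalize to $(0,\infty,1)\mapsto(0,0,\infty)$ and run an explicit ultrametric analysis of the matrix entries under the max-entry normalization, showing $|a_n|,|b_n|\to 0$, $|c_n|=|d_n|=1$, and that the poles $-d_n/c_n$ converge to the exceptional point. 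Both arguments are sound; the cross-ratio route is shorter and coordinate-free, while your coefficient analysis is more self-contained (it does not need Proposition \ref{pro:crossratio}) and yields extra information, namely an explicit identification of the exceptional point as the limit of the poles and of the constant value as the limit of $b_n/d_n$. One small ordering quibble: from $|a_n/c_n|\to 0$, $|b_n/d_n|\to 0$ and the normalization alone you only get $|a_n|,|b_n|\to 0$ and $\max\{|c_n|,|d_n|\}=1$; the equality $|c_n|=|d_n|$ already requires the third condition $|c_n+d_n|\to 0$ via the strong triangle inequality, so that deduction should be stated after, not before, invoking $f_n(1)\to\infty$. This does not affect the validity of the argument.
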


We can reprove this theorem by the use of the three-point norms.

If ${\bf U}$ contains only one point, it is the case $(d)$, and if ${\bf U}$ contains two points only, it is the case $(c)$ or $(d)$. Hence we only need to consider the case when ${\bf U}$ contains at least three points.

We prove the following theorem by using different norms of  $p-$adic M\"obius maps without using the cross ratios of $p-$adic M\"obius maps.

\begin{thm}\label{thm:threeptconv}Let $\{f_{n}\}$ be a sequence of $p$-adic M\"obius maps and $z_{j},j=1,2,3$ be three distinct points with $f_{n}(z_{j})\rightarrow w_{j}$, where $w_{j}$ are also three distinct points. Then a sequence $\{f_{n}\}$ converge to a $p$-adic M\"obius map $f$ uniformly, where $f(z_{j})=w_{j},j=1,2,3$.

\end{thm}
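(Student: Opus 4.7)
The plan is to reduce to the case where the target configuration is the standard one (three cube roots of unity), so that the three–point norm inequality of Theorem~\ref{thm:ine1} applies directly, and then transport the convergence back via Proposition~\ref{pro:rv}.

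First, I would exploit the triple transitivity of $\mathrm{PSL}(2,\mathbb{C}_{p})$ on $\mathbb{P}^{1}(\mathbb{C}_{p})$: let $\zeta_{0},\zeta_{1},\zeta_{2}$ be the three distinct roots of $z^{3}=1$ used in the definition of $\varepsilon$, and pick $\alpha,\beta\in\mathrm{PSL}(2,\mathbb{C}_{p})$ with $\alpha(z_{j})=\zeta_{j}$ and $\beta(w_{j})=\zeta_{j}$ for $j=0,1,2$. Define $F_{n}=\beta\circ f_{n}\circ\alpha^{-1}$. The hypothesis $f_{n}(z_{j})\to w_{j}$ then becomes $F_{n}(\zeta_{j})=\beta(f_{n}(\alpha^{-1}(\zeta_{j})))=\beta(f_{n}(z_{j}))\to\beta(w_{j})=\zeta_{j}$, because $\beta$, being a $p$-adic M\"obius map, is continuous in the chordal metric (Lemma~\ref{lem:lip1}). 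Consequently $\varepsilon(F_{n})=\max_{j}\rho_{v}(F_{n}(\zeta_{j}),\zeta_{j})\to 0$.

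Next, I feed this into the three-point norm inequality: by Theorem~\ref{thm:ine1}, $M(F_{n})\le 6\varepsilon(F_{n})\to 0$. Coupling this with Theorem~\ref{thm:ucM} when $p\ge 3$, or with Theorem~\ref{thm:ucMp=2} when $p=2$, gives $\rho_{0}(F_{n},I)\to 0$, i.e.\ $F_{n}\to I$ uniformly on $\mathbb{P}^{1}(\mathbb{C}_{p})$ in the chordal metric.

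To transport this back to $f_{n}$, set $f=\beta^{-1}\circ\alpha$, a $p$-adic M\"obius map satisfying $f(z_{j})=\beta^{-1}(\zeta_{j})=w_{j}$. Then $f_{n}=\beta^{-1}\circ F_{n}\circ\alpha$ and $f=\beta^{-1}\circ I\circ\alpha$, so by Proposition~\ref{pro:rv}(1) pre-composition with the bijection $\alpha$ preserves $\rho_{0}$ and post-composition with $\beta^{-1}$ introduces at most the multiplicative constant $L(\beta^{-1})$, giving
\begin{equation*}
\rho_{0}(f_{n},f)=\rho_{0}\bigl(\beta^{-1}F_{n}\alpha,\beta^{-1}I\alpha\bigr)=\rho_{0}\bigl(\beta^{-1}F_{n},\beta^{-1}\bigr)\le L(\beta^{-1})\,\rho_{0}(F_{n},I)\longrightarrow 0.
\end{equation*}
Thus $f_{n}\to f$ uniformly, completing the proof.

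The main conceptual obstacle is really just the first step, assembling the conjugated sequence $F_{n}$ in such a way that the \emph{specific} three reference points $\zeta_{0},\zeta_{1},\zeta_{2}$ appearing in $\varepsilon$ are hit; once this normalization is achieved, the inequalities of Theorem~\ref{thm:ine1}, Theorem~\ref{thm:ucM} (or Theorem~\ref{thm:ucMp=2}) and the invariance/Lipschitz properties in Proposition~\ref{pro:rv} do the work mechanically, and the argument avoids any appeal to the classical cross-ratio trick used for the complex case.
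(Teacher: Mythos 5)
Your proposal is correct and follows essentially the same route as the paper: normalize via triple transitivity so that the discrepancy between $f_{n}$ and the limit candidate approximately fixes the three cube roots of unity, apply the three-point norm inequality (Theorem~\ref{thm:ine1} together with Theorem~\ref{thm:ucM}/Theorem~\ref{thm:ucMp=2}, which is exactly Corollary~\ref{cor:ine2} as the paper invokes it) to get uniform convergence to the identity, and transport back by Proposition~\ref{pro:rv}. Your write-up is in fact more careful than the paper's at the final step, where you make the Lipschitz constant $L(\beta^{-1})$ explicit in returning from $F_{n}\to I$ to $f_{n}\to f$.
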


\begin{proof} We can find a $p$-adic M\"obius map $h$ such that $h(z_{1})=u_{1},h(z_{2})=u_{2},h(z_{3})=u_{3}$, where $u_{i}$ are the three distinct roots of $z^{3}=1$. Then $hf_{n}h^{-1}(u_{i})\rightarrow w_{i}$.  By Corollary \ref{cor:ine2}, we know that $\frac{1}{4}\varepsilon(hf^{-1}f_{n}h^{-1})\le \rho_{0}(hf^{-1}f_{n}h^{-1},I)\le\varepsilon(hf^{-1}f_{n}h^{-1})$. This yields that $hg^{-1}f_{n}h^{-1}$ converges to $I$ uniformly, namely $f_{n}$ converges to $f$ uniformly.

\end{proof}
\begin{pro}[\cite{WY}]\label{pro:crossratio} Let $f\in \mathrm{PSL}(2,\mathbb{C}_{p})$. Then $f$ preserves the chordal cross ratio, namely $$\frac{\rho_{v}(f(x),f(y))\rho_{v}(f(z),f(w))}{\rho_{v}(f(x),f(z))\rho_{v}(f(y),f(w))}=\frac{\rho_{v}(x,y)\rho_{v}(z,w)}{\rho_{v}(x,z)\rho_{v}(y,w)}.$$

\end{pro}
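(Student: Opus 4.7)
The plan is to reduce the invariance of the chordal cross ratio to that of the classical $p$-adic cross ratio $|x-y||z-w|/(|x-z||y-w|)$. First, I would observe that the weight factors in $\rho_{v}$ cancel completely in this particular combination. Each of the four points $x,y,z,w$ appears exactly once among the pairs $\{x,y\},\{z,w\}$ in the numerator and exactly once among $\{x,z\},\{y,w\}$ in the denominator, so the product $\max\{1,|x|\}\max\{1,|y|\}\max\{1,|z|\}\max\{1,|w|\}$ occurs identically in both and cancels. Consequently, for four finite points in $\mathbb{C}_{p}$,
\[
\frac{\rho_{v}(x,y)\rho_{v}(z,w)}{\rho_{v}(x,z)\rho_{v}(y,w)} \;=\; \frac{|x-y|\,|z-w|}{|x-z|\,|y-w|}.
\]

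Second, I would verify directly that the right-hand side is preserved by any lift $f(z)=(az+b)/(cz+d)$ with $ad-bc\neq 0$. The standard identity
\[
f(x)-f(y) \;=\; \frac{(ad-bc)(x-y)}{(cx+d)(cy+d)}
\]
yields $|f(x)-f(y)| = |ad-bc|\,|x-y|/(|cx+d|\,|cy+d|)$. Plugging this into the classical cross ratio, the overall factor $|ad-bc|^{2}$ cancels between numerator and denominator, and each of $|cx+d|,|cy+d|,|cz+d|,|cw+d|$ occurs exactly once on top and once on bottom, so they cancel as well, leaving exactly $|x-y||z-w|/(|x-z||y-w|)$.

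Finally, I would handle the case where one of $x,y,z,w$ or one of their images under $f$ equals $\infty$. Using $\rho_{v}(z,\infty)=1/\max\{1,|z|\}$, the same cancellation pattern gives, for instance, $\rho_{v}(x,y)\rho_{v}(z,\infty)/\big(\rho_{v}(x,z)\rho_{v}(y,\infty)\big) = |x-y|/|x-z|$, which is the expected limiting form of the classical cross ratio as $w\to\infty$. After pushforward by $f$, the formula $|f(x)-f(\infty)| = |ad-bc|/(|c|\,|cx+d|)$ (when $c\neq 0$) produces the same collapse; the case $c=0$ is even simpler since $f$ then restricts to an affine map and $f(\infty)=\infty$. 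I do not foresee any serious obstacle: the argument is essentially the classical Möbius computation, and the only care required is bookkeeping of the $\max\{1,|\cdot|\}$ factors in the boundary cases.
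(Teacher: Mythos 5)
Your argument is correct. Note, however, that the paper does not actually prove this proposition: it is imported from the reference [WY] (Wang and Yang) and stated without proof, so there is no in-paper argument to compare against. Your computation is the standard one and it is complete: the $\max\{1,|\cdot|\}$ weights cancel because each of the four points occurs exactly once in the numerator pairs $\{x,y\},\{z,w\}$ and once in the denominator pairs $\{x,z\},\{y,w\}$, reducing the chordal cross ratio to $|x-y|\,|z-w|/(|x-z|\,|y-w|)$ for finite points; the identity $f(x)-f(y)=(ad-bc)(x-y)/\bigl((cx+d)(cy+d)\bigr)$ then gives invariance since $|ad-bc|^{2}$ and each $|c\cdot+d|$ factor cancel. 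The one sub-case you mention only in passing --- a finite point $x$ with $cx+d=0$, so $f(x)=\infty$ --- also goes through by the same bookkeeping: the image cross ratio collapses to $|f(z)-f(w)|/|f(y)-f(w)|=|cy+d|\,|z-w|/\bigl(|cz+d|\,|y-w|\bigr)$, which equals $|x-y|\,|z-w|/(|x-z|\,|y-w|)$ because $|x-y|=|cy+d|/|c|$ and $|x-z|=|cz+d|/|c|$ when $x=-d/c$. It would be worth writing that line out explicitly, but there is no gap.
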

\begin{thm}\label{thm:twoptconv} Let $\{f_{n}\}$ be a sequence of p-adic maps, and suppose that there exist three distinct points $x_{1}, x_{2}, x_{3}$ in $\mathbb{P}^{1}(\mathbb{C}_{p})$ such that $\displaystyle\mathop{\lim}_{n\rightarrow\infty}f_{n}(x_{1})=\displaystyle\mathop{\lim}_{n\rightarrow\infty}f_{n}(x_{2})=\alpha$,
$\displaystyle\mathop{\lim}_{n\rightarrow\infty}f_{n}(x_{3})=\beta$, where $\alpha\neq \beta$.
Then $f_{n}\rightarrow \alpha$ on $\mathbb{P}^{1}(\mathbb{C}_{p})\setminus{x_{3}}$, namely ${\bf U}=\mathbb{P}^{1}(\mathbb{C}_{p})$, and $f$ is constant on the complement of one point on ${\bf U}$.

\end{thm}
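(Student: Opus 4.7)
The plan is to extract everything from the chordal cross-ratio invariance of Proposition~\ref{pro:crossratio}. Fix any $x \in \mathbb{P}^1(\mathbb{C}_p)\setminus\{x_1,x_2,x_3\}$. Applying the identity to $f_n$ with the four points $x, x_1, x_2, x_3$ yields
$$
\frac{\rho_v(f_n(x), f_n(x_1))\,\rho_v(f_n(x_2), f_n(x_3))}{\rho_v(f_n(x), f_n(x_3))\,\rho_v(f_n(x_1), f_n(x_2))} \;=\; \frac{\rho_v(x, x_1)\,\rho_v(x_2, x_3)}{\rho_v(x, x_3)\,\rho_v(x_1, x_2)} \;=:\; C(x),
$$
a positive constant independent of $n$. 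Both denominators on the left are nonzero for all $n$ because $f_n$ is injective and the relevant points are distinct.

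Next I would rearrange this identity as
$$
\rho_v(f_n(x), f_n(x_1)) \;=\; C(x)\cdot\rho_v(f_n(x), f_n(x_3))\cdot\frac{\rho_v(f_n(x_1), f_n(x_2))}{\rho_v(f_n(x_2), f_n(x_3))},
$$
and pass to the limit. By hypothesis $f_n(x_1), f_n(x_2)\to\alpha$, so $\rho_v(f_n(x_1), f_n(x_2))\to 0$; simultaneously $\rho_v(f_n(x_2), f_n(x_3))\to\rho_v(\alpha,\beta) > 0$ because $\alpha\neq\beta$. The factor $\rho_v(f_n(x), f_n(x_3))$ is automatically bounded (by $1$) since the chordal metric is bounded by $1$. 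Therefore the right-hand side tends to $0$, giving $\rho_v(f_n(x), f_n(x_1))\to 0$, and combining this with $f_n(x_1)\to\alpha$ yields $f_n(x)\to\alpha$. The remaining cases $x=x_1$ and $x=x_2$ are immediate from the assumption, and for $x=x_3$ we have $f_n(x_3)\to\beta\neq\alpha$ by assumption. Thus $\mathbf{U}=\mathbb{P}^1(\mathbb{C}_p)$ and the limit function $f$ equals the constant $\alpha$ on $\mathbb{P}^1(\mathbb{C}_p)\setminus\{x_3\}$, which is precisely case (b) of Theorem~\ref{thm:convergence}.

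There is no serious obstacle; the only delicate point is verifying that the denominator $\rho_v(f_n(x_2), f_n(x_3))$ stays bounded \emph{below} away from zero, which follows from the hypothesis $\alpha\neq\beta$ together with the continuity of $\rho_v$. The proof bypasses all three-point or matrix-norm inequalities developed earlier and relies solely on the cross-ratio identity, making it the natural companion to Theorem~\ref{thm:threeptconv} which handled the nondegenerate case where all three images stay distinct in the limit.
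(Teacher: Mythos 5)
Your proof is correct and takes essentially the same route as the paper: both rest entirely on the cross-ratio invariance of Proposition~\ref{pro:crossratio} applied to the four points $x,x_{1},x_{2},x_{3}$, exploiting that $\rho_{v}(f_{n}(x_{1}),f_{n}(x_{2}))\to 0$ while $\rho_{v}(f_{n}(x_{2}),f_{n}(x_{3}))\to\rho_{v}(\alpha,\beta)>0$ and the chordal metric is bounded by $1$. The only difference is cosmetic --- the paper normalizes to $0,1,\infty$ and argues by contradiction along a subsequence, whereas you rearrange the identity and pass to the limit directly.
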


\begin{proof} Without loss of generality, we can assume that $x_{1}=0,x_{2}=1,x_{3}=\infty$ and $\alpha=0,\beta=\infty$. Assuming $x_{4}\in \mathbb{P}^{1}(\mathbb{C}_{p})\setminus \{0,1,\infty\}$, if the sequence $\{f_{n}(x_{4})\}$ does not converge to $0$, there exists a subsequence $\{f_{n_{j}}(x_{4})\}$ and a fixed positive number $\delta$ such that $|f_{n_{j}}(x_{4})|>\delta$. $$\frac{\rho_{v}(f_{n_{j}}(0),f_{n_{j}}(1))\rho_{v}(f_{n_{j}}(\infty),f_{n_{j}}(x_{4}))}{\rho_{v}(f_{n_{j}}(0),f_{n_{j}}(\infty))\rho_{v}(f_{n_{j}}(1),f_{n_{j}}(x_{4}))}=\frac{\rho_{v}(0,1)\rho_{v}(\infty,x_{4})}{\rho_{v}(0,\infty)\rho_{v}(1,x_{4})}\neq 0.$$

Letting $n_{j}$ tend to $\infty$,

$$0=\displaystyle\mathop{\lim}_{n_{j}\rightarrow\infty}\frac{\rho_{v}(f_{n_{j}}(0),f_{n_{j}}(1))\rho_{v}(f_{n_{j}}(\infty),f_{n_{j}}(x_{4}))}{\rho_{v}(f_{n_{j}}(0),f_{n_{j}}(\infty))\rho_{v}(f_{n_{j}}(1),f_{n_{j}}(x_{4}))}=\frac{\rho_{v}(0,1)\rho_{v}(\infty,x_{4})}{\rho_{v}(0,\infty)\rho_{v}(1,x_{4})}\neq 0.$$
This is a contradiction. Hence $\displaystyle\mathop{\lim}_{n\rightarrow\infty}f_{n}(x_{4})=0$.
\end{proof}

Combing Theorem \ref{thm:threeptconv} and Theorem \ref{thm:twoptconv}, we prove Theorem \ref{thm:convergence}.

\section{The decomposition theorem of $p-$adic M\"obius maps}

Two points $\alpha,\beta$ are called {\it antipodal} points if there exists an element $u\in \mathrm{PSL}(2,\mathcal{O}_{p})$ such that $u(0)=\alpha,u(\infty)=\beta$.

\begin{thm}\label{thm:de} For any $p$-adic M\"obius map $g$, there exists an element $u\in \mathrm{PSL}(2,\mathcal{O}_{p})$ such that $g=uf$, where either $f$ is a loxodromic element with antipodal fixed points, or $f=I$.

\end{thm}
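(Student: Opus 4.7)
The plan is as follows. If $g\in \mathrm{PSL}(2,\mathcal{O}_{p})$, set $u=g$ and $f=I$; the conclusion holds trivially. Otherwise, by Theorem \ref{thm:1} the point $\xi:=g^{-1}(\zeta_{Gauss})$ is distinct from $\zeta_{Gauss}$, and by Lemma \ref{yl:type} it is again of type II. The idea is to produce a loxodromic element $f$ with antipodal fixed points that moves $\xi$ back to $\zeta_{Gauss}$; then $u:=gf^{-1}$ satisfies $u(\zeta_{Gauss})=g(\xi)=\zeta_{Gauss}$, so that $u\in \mathrm{PSL}(2,\mathcal{O}_{p})$ by Theorem \ref{thm:1} and the factorization $g=uf$ is exactly as required.

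To construct $f$, I would first consider the unique geodesic line $A$ in $\mathbb{P}^{1}_{Ber}$ through $\zeta_{Gauss}$ and $\xi$ and denote its two endpoints in $\mathbb{P}^{1}(\mathbb{C}_{p})$ by $\alpha$ and $\beta$. Because $A$ passes through $\zeta_{Gauss}$, one can exhibit $v\in \mathrm{PSL}(2,\mathcal{O}_{p})$ with $v(0)=\alpha$ and $v(\infty)=\beta$, which shows that $\alpha,\beta$ are antipodal. This can be verified explicitly: $\mathrm{PSL}(2,\mathcal{O}_{p})$ acts transitively on $\mathbb{P}^{1}(\mathbb{C}_{p})$ (via translations $z\mapsto z+a$ with $a\in\mathcal{O}_{p}$, the inversion $z\mapsto 1/z$, and maps $z\mapsto 1/(z+a)$), so one first sends $\alpha$ to $0$, and then uses the stabilizer of $0$ inside $\mathrm{PSL}(2,\mathcal{O}_{p})$ to send the image of $\beta$ to $\infty$; the latter step is legal precisely because the image of $\beta$ lies in a different tangent direction at $\zeta_{Gauss}$ from $0$.

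Since $v$ fixes $\zeta_{Gauss}$ and maps the geodesic from $0$ to $\infty$ onto $A$, the point $v^{-1}(\xi)$ lies on the axis from $0$ to $\infty$, and as $\xi$ is of type II it must equal $\zeta_{0,r}$ for a unique $r\in |\mathbb{C}_{p}^{\ast}|$ with $r\neq 1$. Choose $\lambda\in\mathbb{C}_{p}^{\ast}$ with $|\lambda|=r^{-1}$ and set $f:=v\circ M_{\lambda}\circ v^{-1}$, where $M_{\lambda}(z)=\lambda z$. Then $f$ has fixed points $\alpha=v(0)$ and $\beta=v(\infty)$, is loxodromic since $|\lambda|\neq 1$, and satisfies
$$f(\xi)=v\bigl(M_{\lambda}(\zeta_{0,r})\bigr)=v(\zeta_{0,|\lambda|r})=v(\zeta_{Gauss})=\zeta_{Gauss},$$
which is the desired property.

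The main obstacle is the existence of the element $v$, i.e.\ establishing that every geodesic line through $\zeta_{Gauss}$ has antipodal endpoints in the sense of the definition. Once that is in hand, the remainder is routine bookkeeping in the Berkovich tree: preservation of types under $p$-adic M\"obius maps (Lemma \ref{yl:type}), the explicit action of $M_{\lambda}$ on points on the geodesic from $0$ to $\infty$, and the characterization of $\mathrm{PSL}(2,\mathcal{O}_{p})$ as the stabilizer of $\zeta_{Gauss}$ supplied by Theorem \ref{thm:1}.
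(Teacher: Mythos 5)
Your proposal is correct, but it proves the theorem by a genuinely different route than the paper. The paper first writes $g$ as a product of two involutions (Lemma \ref{thm:1.1}), normalizes the fixed points of one involution to be symmetric about the origin via an explicit matrix computation, and then inserts a third involution $c$ fixing $\zeta_{Gauss}$ so that $g=(ac)(cb)$ with $ac\in\mathrm{PSL}(2,\mathcal{O}_{p})$ and $cb$ loxodromic with antipodal fixed points; this requires separate treatment of the parabolic case and implicitly of $p=2$ versus $p\ge 3$. You instead read the statement as a Cartan-type decomposition with respect to the stabilizer of $\zeta_{Gauss}$: loxodromic elements with antipodal fixed points are exactly the translations along geodesic lines through $\zeta_{Gauss}$, so one such translation $f$ moves $\xi=g^{-1}(\zeta_{Gauss})$ back to $\zeta_{Gauss}$ and $u=gf^{-1}$ lands in $\mathrm{PSL}(2,\mathcal{O}_{p})$ by Theorem \ref{thm:1}. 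This is shorter, uniform in $p$ and in the type of $g$, and isolates the one real content of the theorem, namely that the ends of any geodesic line through $\zeta_{Gauss}$ form an antipodal pair, which you verify correctly (the point is that such ends reduce to distinct points of the residue field, so after translating one to $0$ the other satisfies $|\beta|\ge 1$ and $z\mapsto z/(1-z/\beta)$ does the job). One small correction: the geodesic line through $\zeta_{Gauss}$ and $\xi$ is not unique, since the Berkovich tree branches at every type II point; only the segment $[\zeta_{Gauss},\xi]$ is unique, and you must \emph{choose} an extension of it to a line with two type I ends. Any such choice works, so this does not affect the argument, but the word ``unique'' should be dropped.
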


\begin{proof} If $g$ is a loxodromic element or elliptic element,  by Lemma \ref{thm:1.1}, there exist two involutions $a,b$ such that $g=ab$,  the (tailed) axes of $a$ and $b$ are orthogonal to the axis of $g$, and the (tailed) axis of $a$ containing $\zeta_{Gauss}$. Let $\alpha$, $\beta$ be the fixed points of $b$. We claim that there exists an element in  $h\in \mathrm{PSL}(2,\mathcal{O}_{p})$ such that $h(\alpha)+h(\beta)=0$.

{\it Claim}:
Without loss of generality, we can assume that $|\alpha|\le 1$, otherwise we can consider $1/\alpha$, since $1/z\in \mathrm{PSL}(2,\mathcal{O}_{p})$. Let $u(z)=z-\alpha\in \mathrm{PSL}(2,\mathcal{O}_{p})$ which yields that $u(\alpha)=0$. Hence we can assume that $\alpha=0$. If $|\beta|\le 1$, then let $h(z)=z-\frac{\beta}{2}$ which implies that $h(0)+h(\beta)=0$.  If $|\beta|>1$, then let $h(z)=\frac{az+b}{cz+d}$ with $h(0)+h(\beta)=0$. Hence $$\frac{b}{d}+\frac{a\beta+b}{c\beta+d}=0$$ which yields that $\beta(ad+bc)+2bd=0$. Let $X_{1}=ad,X_{2}=bc,X_{3}=bd$. Hence we have three equations:

\begin{equation}
\beta(X_{1}+X_{2})+2X_{3}=0
\end{equation}

\begin{equation}
X_{1}-X_{2}=1
\end{equation}

\begin{equation}
X_{1}+X_{2}=\lambda
\end{equation}
Thus we have $X_{1}=\frac{\lambda+1}{2},X_{2}=\frac{\lambda-1}{2}$, and $X_{3}=-\frac{\lambda\beta}{2}$. Since $\frac{a}{b}=\frac{X_{1}}{X_{3}}$ and $\frac{c}{d}=\frac{X_{2}}{X_{3}}$, let $a=\frac{\lambda+1}{2}t$, $b=-\frac{\lambda\beta}{2}t$, $c=\frac{\lambda-1}{2}s$, and $d=-\frac{\lambda\beta}{2}s$ which yields that $-\frac{\lambda\beta(\lambda+1)}{4}ts+\frac{\lambda\beta(\lambda-1)}{4}ts=1$, namely $-\frac{\lambda\beta}{2}ts=1$. Obviously, $\lambda=1/\beta$, $t=2$, and $s=1$ is one of the solution of the equations with $\max\{|a|,|b|,|c|,|d|\}\le 1$, namely $h\in \mathrm{PSL}(2,\mathcal{O}_{p})$. Hence we prove the claim.

Since $h(\alpha)+h(\beta)=0$, the geodesic line connecting $0,\infty$  is orthogonal to the geodesic line connecting $h(\alpha),h(\beta)$ , and this line contains $\zeta_{Gauss}$. Let $l_{1}$ be the geodesic line which connects the endpoints of involution $a$. Since all the elements in $\mathrm{PSL}(2,\mathcal{O}_{p})$ fix the point $\zeta_{Gauss}$, the point $\zeta_{Gauss}\in h(l_{1})$. By Lemma \ref{thm:1.1}, we know that there exists an involution $c$ which fixes the point $\zeta_{Gauss}$ such that $chbh^{-1}$ is a loxodromic element whose fixed points are $0,\infty$, and $hah^{-1}c$ is an elliptic element in $\mathrm{PSL}(2,\mathcal{O}_{p})$, since $hah^{-1}c$ fixes the point $\zeta_{Gauss}$. Thus $hgh^{-1}=hah^{-1}cchbh^{-1}$ which yields that $g=h^{-1}hah^{-1}chh^{-1}(chbh^{-1})h$, where $h^{-1}hah^{-1}ch\in \mathrm{PSL}(2,\mathcal{O}_{p})$ and $h^{-1}(chbh^{-1})h$ is a loxodromic element with antipodal fixed points.

If $g$ is a parabolic element, then we can assume the fixed point of $g$ is $\infty$ after conjugating by an element in $\mathrm{PSL}(2,\mathcal{O}_{p})$ by the Claim. Thus $g(z)=ab$, where $a(z)=-z,b(z)=-z+b$. Since $a(z)=-z$ contains the point $\zeta_{Gauss}$, by similar discussion above, we can find a involution $c$ which fixes the point $\zeta_{Gauss}$ such that $az\in \mathrm{PSL}(2,\mathcal{O}_{p})$, and $cb$ is a loxodromic element with antipodal fixed points.







\end{proof}


Let $\mathcal{U}=\mathrm{PSL}(2,\mathcal{O}_{p})$. We define $d(g,\mathcal{U})=\inf\{\rho_{0}(g,u)|u\in \mathcal{U}\}$.

\begin{thm}\label{thm:hype}For any $p$-adic M\"obius map, either $d(g,\mathcal{U})=0$, if $g\in \mathcal{U}$, or $d(g,\mathcal{U})=1$, if $g\notin \mathcal{U}$.

\end{thm}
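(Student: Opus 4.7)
The plan is to handle the two cases separately. For $g \in \mathcal{U}$, simply note that $d(g,\mathcal{U}) \le \rho_0(g,g) = 0$. For $g \notin \mathcal{U}$, the upper bound $d(g,\mathcal{U}) \le 1$ is automatic since the chordal distance is bounded by $1$, so the whole problem reduces to proving the matching lower bound $d(g,\mathcal{U}) \ge 1$.

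The key reduction is to invoke Theorem \ref{thm:1}: every $u \in \mathcal{U}$ is a chordal isometry, so $\rho_v(g(z),u(z)) = \rho_v(u^{-1}g(z),z)$ for every $z$. Hence $\rho_0(g,u) = \rho_0(u^{-1}g,I)$, and since $\mathcal{U}$ is a subgroup, $u^{-1}g \notin \mathcal{U}$. It therefore suffices to prove: for every $p$-adic M\"obius map $h \notin \mathcal{U}$, one has $\rho_0(h,I) = 1$. Writing $h(z) = (az+b)/(cz+d)$ with $ad - bc = 1$, the hypothesis becomes $\|h\| = \max\{|a|,|b|,|c|,|d|\} > 1$ by Theorem \ref{thm:1}(5). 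Evaluating $\rho_v(h(z),z)$ at the four distinguished test points $z = 0,\infty,-b/a,-d/c$ yields the quantities $|b|/\max\{|b|,|d|\}$, $|c|/\max\{|a|,|c|\}$, $|b|/\max\{|a|,|b|\}$, and $|c|/\max\{|c|,|d|\}$, which equal $1$ iff, respectively, $|b|\ge|d|$, $|c|\ge|a|$, $|b|\ge|a|$, or $|c|\ge|d|$.

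If none of these four inequalities holds, then $|b|,|c| < \min\{|a|,|d|\}$, in which case the strong triangle inequality yields $|bc| < |ad|$, so the determinant condition $|ad-bc| = 1$ forces $|ad| = 1$. Combined with $\max\{|a|,|d|\} = \|h\| > 1$, this pins $|a|$ and $|d|$ to opposite sides of $1$; say $|a| > 1 > |d|$, the other case being symmetric. Then for any $z$ with $|z| = 1$, the ultrametric gives $|az+b| = |a|$ and $|cz+d| = |d|$, hence $|h(z)| = |a|/|d| > 1 = |z|$, so $|h(z)-z| = |h(z)|$ and $\rho_v(h(z),z) = 1$. The main obstacle is exactly this last case, where the four natural test points all produce values strictly below $1$; the crucial observation is that the determinant relation then compels a specific placement of the absolute values of $a$ and $d$ that can be exploited by any point on the unit circle.
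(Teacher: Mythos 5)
Your proof is correct, but it takes a genuinely different route from the paper. The paper first invokes its decomposition theorem (Theorem \ref{thm:de}) to write $g=uf$ with $u\in\mathcal{U}$ and $f$ loxodromic with antipodal fixed points, conjugates $f$ to $\lambda z$, and then shows that $\rho_{0}(\lambda z,s)=1$ for every $s=\frac{az+b}{cz+d}\in\mathcal{U}$ by a trichotomy on $|b/d|$ and $|a/c|$, testing at $0$, $\infty$, or $-d/c$. You instead exploit the isometry property of $\mathcal{U}$ (Theorem \ref{thm:1}) to rewrite $\rho_{0}(g,u)=\rho_{0}(u^{-1}g,I)$ and reduce the whole theorem to the single statement that $\rho_{0}(h,I)=1$ whenever $\|h\|>1$, which you then verify by a direct four-test-point analysis plus a unit-circle argument in the residual case. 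Your approach buys independence from Theorem \ref{thm:de} (whose proof is one of the more delicate parts of the paper), so Theorem \ref{thm:hype} becomes self-contained modulo Theorem \ref{thm:1}; the paper's approach buys a normal form ($\lambda z$ with antipodal fixed points) that makes the final computation shorter and ties the result to the geometric decomposition machinery. Two small points you should make explicit if you write this up: the test points $-b/a$ and $-d/c$ are undefined when $a=0$ or $c=0$, but in each such degenerate case one of the other three inequalities (or the final unit-circle case) already applies, since the determinant condition forbids two entries in the same row or column from vanishing; and in the residual case the conclusion $|ad|=1$ together with $\max\{|a|,|d|\}>1$ does indeed force $|a|$ and $|d|$ to opposite sides of $1$, after which your unit-circle computation (and its symmetric twin) is exactly right.
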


\begin{proof} If $g\in \mathcal{U}$, then  $d(g,\mathcal{U})=0$. If $g\notin \mathcal{U}$, then by Theorem \ref{thm:de}, there exist $u\in \mathrm{PSL}(2,\mathcal{O}_{p})$ and a loxodromic element $f$ with antipodal fixed points such that $g=uf$. Hence there exists an element $h\in \mathrm{PSL}(2,\mathcal{O}_{p})$ such that $v=hfh^{-1}=\lambda z$. Thus $d(g,\mathcal{U})=d(uf,\mathcal{U})=d(v,\mathcal{U})$.

For any $s(z)=\frac{az+b}{cz+d}\in \mathrm{PSL}(2,\mathcal{O}_{p})$, since $ad-bc=1$ and $\max\{|a|,|b|,|c|,|d|\}\le 1$, we have that $|\frac{b}{d}|=1$, or $|\frac{a}{c}|=1$, or $|\frac{b}{d}|>1, |\frac{a}{c}|<1$, otherwise, if $|\frac{b}{d}|>1$ and $ |\frac{a}{c}|>1$, then $|d|<|b|\le1$ and $|c|<|a|\le1$ which yields that $|ad-bc|\le \max\{|ad|,|bc|\}<1$. This contradicts $ad-bc=1$. Other cases are similar. If $|\frac{b}{d}|=1$, then $\rho_{v}(v(0),s(0))=1$. If $|\frac{a}{c}|=1$, then $\rho_{v}(v(\infty),s(\infty))=1$. If $|\frac{b}{d}|>1, |\frac{a}{c}|<1$, then $1\ge |b|>|d|$ and $|a|<|c|\le1$. Since $ad-bc=1$, we have $|b|=|c|=1>\max\{|a|,|d|\}$. This implies that $|\frac{d}{c}|<1$. $$\rho_{v}(v(-\frac{d}{c}),s(-\frac{d}{c}))=\frac{|\lambda c(-\frac{d}{c})^{2}+(\lambda d-a)(-\frac{d}{c})-b|}{\max\{1,|-\frac{d}{c}|\}\max\{|a(-\frac{d}{c})+b|,|c(-\frac{d}{c})+d|\}}=\frac{|1/c|}{|1/c|}=1.$$
This yields that for any $p$-adic M\"obius map $g\notin \mathcal{U}$, and any $s\in \mathcal{U}$, $\rho_{0}(g,s)=1$, namely $d(g,\mathcal{U})=1$.

\end{proof}

Let $G$ be a subgroup of $\mathrm{PSL}(2,\mathbb{C}_{p})$. We say that $G$ a discrete subgroup if there exists a positive number $\varepsilon$ such that for any non-unit element $g$ with $\parallel g-I\parallel>\varepsilon$.

\begin{thm}\label{thm:dis}If $G$ is a subgroup of $\mathrm{PSL}(2,\mathbb{C}_{p})$ and $G\cap \mathcal{U}=I$, then $G$ is a discrete subgroup.

\end{thm}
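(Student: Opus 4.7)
The plan is to show that under the hypothesis $G \cap \mathcal{U} = \{I\}$, every non-identity element of $G$ sits at matrix-distance at least $1$ from $I$, so any $\varepsilon \in (0,1)$ will serve as the required uniform separation constant.

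First I would fix an arbitrary element $g \in G$ with $g \neq I$. By the hypothesis $G \cap \mathcal{U} = \{I\}$, this forces $g \notin \mathcal{U} = \mathrm{PSL}(2,\mathcal{O}_p)$. The decomposition result of Theorem~\ref{thm:hype} then applies and yields $d(g,\mathcal{U}) = 1$. Since $I \in \mathcal{U}$, by the definition of the infimum we have $\rho_0(g,I) \geq d(g,\mathcal{U}) = 1$.

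Next I would compare $\rho_0(g,I)$ to $\|g - I\|$. The inequality $\rho_0(g,I) \leq \|g - I\|$ is exactly the content of Theorem~\ref{thm:matr}. Chaining this with the lower bound from the previous step gives $\|g-I\| \geq \rho_0(g,I) \geq 1$. Since $g$ was an arbitrary non-identity element of $G$, this bound is uniform, so choosing any $\varepsilon \in (0,1)$ (say $\varepsilon = 1/2$) furnishes the desired discreteness constant.

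The argument is essentially a two-line chain of the two preceding theorems, so there is no real analytic obstacle; the only point that requires care is making sure the passage from $g \notin \mathcal{U}$ to $d(g,\mathcal{U}) = 1$ is invoked cleanly, using that $I \in \mathcal{U}$ so that $\rho_0(g,I)$ participates in the infimum defining $d(g,\mathcal{U})$. Once that observation is in place, Theorems~\ref{thm:hype} and \ref{thm:matr} combine immediately to yield the uniform lower bound $\|g - I\| \geq 1$, and the discreteness of $G$ follows at once.
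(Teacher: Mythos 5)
Your argument is correct and is essentially identical to the paper's own proof: both chain Theorem~\ref{thm:hype} (giving $d(g,\mathcal{U})=1$ for $g\notin\mathcal{U}$) with Theorem~\ref{thm:matr} (giving $\rho_0(g,I)\le\|g-I\|$) to obtain the uniform bound $\|g-I\|\ge 1$ for every non-identity $g\in G$. Your added remark that $I\in\mathcal{U}$ justifies $\rho_0(g,I)\ge d(g,\mathcal{U})$ is a small but welcome clarification of a step the paper leaves implicit.
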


\begin{proof} By Theorem \ref{thm:matr} and Theorem \ref{thm:hype}, we have $d(f,\mathcal{U})\le \rho_{0}(f,I)\le \parallel f-I\parallel$. Since $G\cap \mathcal{U}=I$, for  any nonunit element $f$,  $\parallel f-I\parallel\ge 1$.

\end{proof}

\begin{cor} If a subgroup $G\subset \mathrm{PSL}(2,\mathbb{C}_{p})$ contains unit element or loxodromic element only, then $G$ is a discrete subgroup.

\end{cor}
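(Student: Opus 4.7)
The plan is to reduce this corollary directly to Theorem \ref{thm:dis}, which asserts that $G$ is discrete as soon as $G \cap \mathcal{U} = \{I\}$, where $\mathcal{U} = \mathrm{PSL}(2,\mathcal{O}_{p})$. Thus the only thing to verify is that no loxodromic element can belong to $\mathcal{U}$.

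To see this, I would combine Theorem \ref{thm:1} with Proposition \ref{pro:tr}. Suppose $g = \frac{az+b}{cz+d} \in \mathcal{U}$. By Theorem \ref{thm:1} we have $\|g\| = 1$, and choosing a representative with $ad-bc = 1$ and $\max\{|a|,|b|,|c|,|d|\}=1$, in particular $|a|\le 1$ and $|d|\le 1$. By the strong triangle inequality, $|a+d| \le 1$, hence $|(a+d)^{2}| \le 1$. Since $|4|_{p} \le 1$ for every prime $p$ (equal to $1$ when $p$ is odd and $1/4$ when $p=2$), the strong triangle inequality gives $|(a+d)^{2}-4| \le \max\{|(a+d)^{2}|, |4|\} \le 1$. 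By Proposition \ref{pro:tr}(3) this rules out $g$ being loxodromic.

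Under the hypothesis of the corollary, every non-identity element of $G$ is loxodromic, so the argument above forces $G \cap \mathcal{U} \subseteq \{I\}$, and trivially $I\in G\cap\mathcal U$. Applying Theorem \ref{thm:dis} then concludes that $G$ is discrete. The only subtlety worth flagging is handling the prime $p=2$ in the estimate on $|(a+d)^{2}-4|$, but this is absorbed automatically by the strong triangle inequality since $|4|_{2}<1$ only helps, and so no additional case analysis is needed.
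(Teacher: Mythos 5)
Your proposal is correct and follows the same route as the paper: the paper's proof is the single sentence ``since each loxodromic element does not belong to $\mathcal{U}$, apply Theorem \ref{thm:dis},'' and you simply supply the justification for that asserted fact via Proposition \ref{pro:tr}(3) and the strong triangle inequality, which is a valid (and welcome) filling-in of a detail the paper leaves implicit.
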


\begin{proof} Since each loxodromic element does not belong to $\mathcal{U}$, by Theorem \ref{thm:dis}, we can get the conclusion directly.

\end{proof}


\end{document}